\numberwithin{equation}{section}
\numberwithin{figure}{section}
\newtheorem{theorem}{Theorem}[section]
\newtheorem*{theorem*}{Theorem}
\newtheorem{lemma}[theorem]{Lemma}
\newtheorem{proposition}[theorem]{Proposition}
\newtheorem*{proposition*}{Proposition}
\theoremstyle{definition}
\newtheorem{definition}[theorem]{Definition}
\begin{document}
\title[Thermodynamic formalism of the Katok map] {Unique equilibrium states, large deviations and Lyapunov spectra for the Katok map}
\author{Tianyu Wang}
\address{Department of Mathematics, The Ohio State University, Columbus, OH 43210, \emph{E-mail address:} \tt{wang.7828@buckeyemail.osu.edu}}

\date{\today}
\thanks{This work is partially supported by NSF grant DMS-$1461163$.}

\begin{abstract}
We study the thermodynamic formalism of a $C^{\infty}$ non-uniformly hyperbolic diffeomorphism on the 2-torus, known as the Katok map. We prove for a H\"older continuous potential with one additional condition, or geometric t-potential $\varphi_t$ with $t<1$, the equilibrium state exists and is unique. We derive the level-2 large deviation principle for the equilibrium state of $\varphi_t$. We study the multifractal spectra of the Katok map for the entropy and dimension of level sets of Lyapunov exponents. 
\end{abstract}

\maketitle
\setcounter{tocdepth}{1}

\section{Introduction}
The Katok map is a $C^\infty$ non-uniformly hyperbolic toral automorphism in dimension 2, generated by a slow-down of the trajectories of a uniformly hyperbolic toral automorphism in a small neighborhood near the fixed point. So far the existence and uniqueness of equilibrium states for uniformly hyperbolic diffeomorphisms with sufficiently regular potentials are well-studied in \cite{Bow75-1}. Meanwhile, researchers have been able to derive the statistical properties for the equilibrium state via symbolic dynamics, including Bernoulli property, exponential decay of correlations and the Central Limit Theorem, see \cite{PP90}, \cite{Rue69}. 

Nevertheless, the thermodynamic formalism of non-uniformly hyperbolic systems is far away from being complete. In the case of the Katok diffeomorphism, non-uniform hyperbolicity is generated by the existence of a neutral fixed point. Its thermodynamic formalism has features in common with the model example of the one-dimensional Manneville-Pomeau map, admitting a neutral fixed point at zero. In \cite{PSZ16}, Pesin, Senti and Zhang studied Katok map as Young's diffeomorphism using countable Markov diagram. In \cite{SZ18}, Shahidi and Zelerowicz studied the Bernoulli properties and decay of correlations of the equilibrium state of the Katok map for locally H\"older potentials. This technique has been applied to other non-uniformly hyperbolic cases, see for example \cite{ST16}. 

In this paper, we study the Katok map using the orbit decomposition approach. The technique is first introduced in \cite{CT16}. The spirit is to generalize the dynamical properties for the map and regularity conditions for potential functions from \cite{Bow75-1} and make them hold on an ``essential collection of orbit segments'' which dominates in topological pressure and presents ``enough uniformly hyperbolic behavior''. This technique has been applied to other non-uniformly hyperbolic cases, see \cite{CFT17-1},\cite{CFT17-2} for DA (derived from Anosov) homeomorphisms, and \cite{BCFT18} for flows. We will compare our approach to that of \cite{PSZ16} after we state our results and explain the details in $\mathsection 7$.

One crucial fact about the Katok map is that it admits an equilibrium state for any continuous potential as the map is expansive. In fact, the Katok map is topologically conjugate to a linear torus automorphism via a homeomorphism, therefore has the specification property. By \cite{Bow75-1}, we know the Katok map has a unique measure of maximal entropy. However, since the conjugacy homeomorphism is neither differentiable nor H\"older, the thermodynamic formalism of Katok map is non-trivial. When the potential functions are geometric t-potentials, the Katok map will go through phase transition just like what happens to Manneville-Pomeau map.  We will prove for any $t<1$, there exists an orbit decomposition such that $t\varphi^{geo}$ has the required regularity on a collection of orbit segments that dominates in pressure. Applying Theorem A in \cite{CT16}, we are able to conclude the uniqueness of equilibrium states for all such $t\varphi^{geo}$. A similar result is obtained for H\"older potentials with the pressure gap $P(\delta_0)<P(\varphi)$, where $\delta_0$ is the Dirac measure at the origin.

Before we state the theorems, we make brief remarks on the notations. In the definition of the Katok map (see \cite{Kat79} and also $\mathsection 3$ for its properties) we have two parameters $r_0$ and $\alpha$. Roughly speaking, $r_0$ is the radius of the perturbed region and $\alpha$ describes the exponential slow-down rate. We also write $\varphi_t=t\varphi^{geo}$ with $\varphi^{geo}=-\log\vert D\widetilde{G}\vert _{E^u(x)}\vert$ being the geometric potential, where $E^u(x)$ is the unstable distribution of $D\widetilde{G}$ at $x$ and $\widetilde{G}$ is the Katok map.

\begin{theorem}
Given the Katok map $\widetilde{G}$ whose $\alpha$ and $r_0$ are sufficiently small, if $\varphi \in C(\mathbb{T}^2)$ is H\"older continuous and $\varphi(\underaccent{\bar}{0})<P(\varphi)$, where $\underaccent{\bar}{0}$ is the origin, then there is a unique equilibrium state for $\varphi$.
\end{theorem}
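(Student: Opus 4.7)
The plan is to invoke Theorem A of \cite{CT16}, which delivers uniqueness of the equilibrium state for $\varphi$ once one exhibits an orbit decomposition $(\mathcal{P}, \mathcal{G}, \mathcal{S})$ of $\mathbb{T}^2 \times \mathbb{N}$ satisfying (i) the specification property for $\mathcal{G}$ at some scale $\delta > 0$, (ii) the Bowen property for $\varphi$ on $\mathcal{G}$, and (iii) $P(\mathcal{P} \cup \mathcal{S}, \varphi) < P(\varphi)$. Since $\widetilde{G}$ is expansive (being topologically conjugate to a linear Anosov automorphism) and $\varphi$ is continuous, an equilibrium state already exists, so uniqueness is the only remaining point.

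The non-uniform hyperbolicity of $\widetilde{G}$ is entirely supported in the ball $B(\underaccent{\bar}{0}, r_0)$ where the linear Anosov map is slowed down. I would therefore fix a scale $0 < r \leq r_0$ and declare an orbit segment $(x, n)$ to lie in $\mathcal{P}$ if $\widetilde{G}^i x \in B(\underaccent{\bar}{0}, r)$ for every $0 \leq i < n$, and analogously for $\mathcal{S}$. For a general $(x, n)$, let $p$ be the largest integer with $(x, p) \in \mathcal{P}$ and $s$ the largest integer with $(\widetilde{G}^{n-s} x, s) \in \mathcal{S}$, and place the middle piece $(\widetilde{G}^p x, n - p - s)$ in $\mathcal{G}$. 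This is an orbit decomposition in the sense of \cite{CT16}, and every segment in $\mathcal{G}$ begins and ends outside $B(\underaccent{\bar}{0}, r)$, i.e.\ in the uniformly hyperbolic region.

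Next I would verify the three hypotheses. The pressure gap is where the assumption $\varphi(\underaccent{\bar}{0}) < P(\varphi)$ enters: any invariant measure obtained as a limit of empirical measures of segments in $\mathcal{P} \cup \mathcal{S}$ is supported on $\bigcap_{k \ge 0} \widetilde{G}^{-k}\overline{B(\underaccent{\bar}{0}, r)}$, which shrinks to $\{\underaccent{\bar}{0}\}$ as $r \to 0$, so upper semicontinuity of entropy and continuity of $\varphi$ yield $P(\mathcal{P} \cup \mathcal{S}, \varphi) \leq \varphi(\underaccent{\bar}{0}) + \varepsilon$ for $r$ sufficiently small, which is strictly below $P(\varphi)$. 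For the Bowen property, H\"older regularity of $\varphi$ combined with uniform hyperbolicity at the endpoints of good segments controls the Birkhoff distortion, provided one shows that cumulative distortion from interior excursions into $B(\underaccent{\bar}{0}, r_0)$ is bounded. For specification, one shadows the cores using the local product structure that survives away from the neutral fixed point and glues consecutive good segments with a uniformly bounded transition time.

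The main obstacle will be the Bowen and specification estimates on $\mathcal{G}$: a good segment need not avoid $B(\underaccent{\bar}{0}, r_0)$ in its interior, and successive excursions near $\underaccent{\bar}{0}$ could in principle accumulate unbounded distortion, as happens for Manneville--Pomeau at the critical parameter. I expect to control this by exploiting the explicit form of the Katok slow-down, which provides uniform return-time and distortion bounds per excursion that become summable when $\alpha$ and $r_0$ are both small; this is precisely why the theorem requires them to be sufficiently small. Once these estimates are secured at a single compatible pair of scales, the three CT16 hypotheses are in place and Theorem A of \cite{CT16} yields uniqueness of the equilibrium state.
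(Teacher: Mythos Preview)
Your overall strategy—apply the Climenhaga--Thompson decomposition theorem—is the same as the paper's, but the decomposition you propose is genuinely different, and the difference is fatal for the Bowen property.

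Your collection $\mathcal{G}$ consists of segments that merely \emph{start and end} outside $B(\underaccent{\bar}{0},r)$. Such a segment can make a single excursion passing arbitrarily close to the origin and therefore spend an arbitrarily large fraction of its length inside the slowed-down region, where contraction along $W^s$ is essentially absent. Concretely, if $y\in W^s_{\epsilon}(x)$ and the orbit of $x$ spends $T$ consecutive iterates in a neighbourhood where $\|DG|_{E^s}\|$ is close to $1$, then $d(G^ix,G^iy)$ stays of order $\epsilon$ for those $T$ iterates, and $\sum_i d(G^ix,G^iy)^{\alpha_0}\gtrsim T\epsilon^{\alpha_0}$. Since $T$ is unbounded over your $\mathcal{G}$, the Birkhoff distortion is unbounded and the Bowen property fails. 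Your proposed remedy of ``summable per-excursion distortion bounds'' cannot work: the distortion of a \emph{single} excursion is not uniformly bounded, because the return time from $B(\underaccent{\bar}{0},r)$ back to itself (passing near the origin) is unbounded. Making $\alpha$ and $r_0$ small does not change this.

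The paper avoids this by defining $\mathscr{G}(r)$ via a \emph{density} condition: the Birkhoff average of the indicator of the good region must be at least $r$ on every initial and terminal sub-segment. This forces at least a fraction $r$ of the iterates up to any time $i$ to lie where genuine contraction occurs, yielding $d_s(G^ix,G^iy)\le (\lambda(1-\beta))^{-ir}d_s(x,y)$ (Lemma~6.1), and the Bowen sum becomes geometric. Correspondingly, $\mathscr{P}(r)=\mathscr{S}(r)$ consists of segments with density less than $r$ in the good region; the pressure-gap argument then runs as you describe, with limit measures forced toward $\delta_0$ as $r\to 0$. Two side remarks: specification is actually free here (it holds globally via the topological conjugacy to $f_A$), so no gluing argument is needed; and the version of the CT theorem used in the paper involves a two-scale pressure $P(\mathscr{P}\cup\mathscr{S},\varphi,\epsilon,100\epsilon)$, which requires an extra argument (\S5.3) beyond the one-scale gap you wrote down.
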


\begin{theorem}
Given the Katok map $\widetilde{G}$ whose $\alpha$ and $r_0$ are chosen sufficiently small, $\varphi_t$ has unique equilibrium state for $t\in (-\infty,1)$.
\end{theorem}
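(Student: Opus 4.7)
The plan is to invoke Theorem~A of \cite{CT16}, which guarantees uniqueness of equilibrium states once one produces an orbit decomposition $(\mathcal{P},\mathcal{G},\mathcal{S})$ such that (i) $\varphi_t$ has the Bowen property on $\mathcal{G}$, (ii) $\mathcal{G}$ has specification (at some fixed scale), and (iii) the pressure carried by $\mathcal{P}\cup\mathcal{S}$ is strictly less than $P(\varphi_t)$. Since $\varphi_t$ is smooth off the neutral fixed point $\underaccent{\bar}{0}$ but singular there, Theorem~1.1 does not apply directly; the role of the decomposition is to quarantine the orbit pieces that linger near $\underaccent{\bar}{0}$, which are the sole source of non-H\"older behavior.

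First I would set up the decomposition. Fix a small neighborhood $U$ of $\underaccent{\bar}{0}$ (of size comparable to $r_0$) and, for each orbit segment $(x,n)$, declare the prefix $p$ to be the longest initial window on which $\widetilde{G}^i(x)\in U$ and the suffix $s$ the longest terminal window with this property. The middle block, which starts and ends outside $U$, is placed in $\mathcal{G}$. Outside $U$ the map $\widetilde{G}$ is uniformly hyperbolic and $\varphi^{geo}$ is smooth, so bounded-distortion arguments yield the Bowen property for $\varphi_t$ on $\mathcal{G}$, uniformly on compact subsets of $t\in(-\infty,1)$. Specification on $\mathcal{G}$ follows from the global specification that $\widetilde{G}$ inherits through the topological conjugacy with the linear automorphism, since orbit segments anchored outside $U$ can be glued in a uniform transition time.

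The critical step is condition (iii), and here the hypothesis $t<1$ enters. Segments in $\mathcal{P}\cup\mathcal{S}$ sit entirely in $U$, where $\widetilde{G}$ is a near-identity slow-down of the linear map. One shows that the number of $(n,\varepsilon)$-separated trajectories trapped in $U$ for time $n$ grows only sub-exponentially and that the Birkhoff sums satisfy $|S_n\varphi_t|=o(n)$ on such trajectories, since $\varphi^{geo}(\underaccent{\bar}{0})=0$ and the orbit stays close to $\underaccent{\bar}{0}$. Hence $P(\mathcal{P}\cup\mathcal{S},\varphi_t)\leq 0$. On the other hand, transferring any ergodic measure of the linear automorphism through the conjugacy and applying the variational principle gives $P(\varphi_t)\geq (1-t)\log\lambda$, where $\lambda$ is the expanding eigenvalue, so the gap is strict precisely for $t<1$. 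Expansivity of $\widetilde{G}$ (inherited from the conjugacy) makes the obstruction-to-expansivity term in the \cite{CT16} criterion vacuous.

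The main obstacle will be executing the pressure estimate on $\mathcal{P}\cup\mathcal{S}$ with enough uniformity: one has to quantify how slowly orbits near $\underaccent{\bar}{0}$ escape $U$ in terms of the slow-down parameter $\alpha$, control the count of trapped separated orbits, and keep $|S_n\varphi_t|/n$ strictly below $(1-t)\log\lambda$ uniformly for $t$ in a compact window, which is exactly what forces the smallness of $\alpha$ and $r_0$ in the hypothesis. The $t<0$ regime requires no separate treatment since the same estimates go through with $\varphi_t\geq 0$, the only effect being that the Birkhoff contribution on $\mathcal{P}\cup\mathcal{S}$ is now non-negative but still sub-linear in $n$; once these estimates are in place, Theorem~A of \cite{CT16} applies uniformly and delivers the unique equilibrium state for all $t\in(-\infty,1)$.
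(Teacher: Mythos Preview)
Your decomposition is too coarse to deliver the Bowen property on $\mathcal{G}$, and this is the crux of the argument. A segment in your $\mathcal{G}$ merely starts and ends outside $U$; nothing stops it from spending an arbitrarily large fraction of its lifetime deep inside $U$, near the neutral fixed point where $D\widetilde{G}$ degenerates to the identity and $\varphi^{geo}$ loses H\"older regularity. Your sentence ``outside $U$ the map is uniformly hyperbolic \dots\ so bounded-distortion arguments yield the Bowen property'' implicitly treats the whole segment as staying outside $U$, which is false. The paper's decomposition (\S4) avoids this by defining $\mathscr{G}(r)$ to be the set of segments on which \emph{every} initial and terminal sub-segment spends at least a fraction $r$ of its time outside a fixed neighborhood of $\underaccent{\bar}{0}$. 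This average-time condition is exactly what drives Lemma~6.1 (contraction along $W^s$ at rate $(\lambda(1-\beta))^{-r}$ on average) and the delicate non-uniform estimate in Proposition~6.5, which is the real work in verifying Bowen for $\varphi^{geo}$. Correspondingly $\mathscr{P}(r)=\mathscr{S}(r)$ consists of segments spending \emph{less} than fraction $r$ outside --- not segments trapped entirely in $U$ --- and the pressure gap is obtained softly, by showing that any weak$^*$ limit of empirical measures of such segments lies near $\delta_0$ (Lemma~5.2--5.3) and invoking upper semi-continuity of the entropy map.

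Two smaller points. First, you cannot read off $P(\varphi_t)\geq (1-t)\log\lambda$ by pushing measures through the conjugacy $h$: $h$ is only a homeomorphism, so it does not transport the geometric potential or Lyapunov exponents. The paper instead uses that Lebesgue measure $m$ is SRB for $\widetilde{G}$, giving $h_m(\widetilde{G})=-\int\varphi^{geo}\,dm>0$ and hence $P(\varphi_t)\geq (1-t)h_m(\widetilde{G})>0$ for $t<1$. Second, since $\widetilde{G}=\phi\circ G\circ\phi^{-1}$ with $\phi$ singular at $\underaccent{\bar}{0}$, the geometric potentials of $G$ and $\widetilde{G}$ differ; the paper proves Bowen for $\varphi^{geo}_G$ first (\S6.2) and then transfers it to $\varphi^{geo}$ via a telescoping identity (equations (7.1)--(7.4)), an extra step your outline omits.
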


In Theorems 1.1 and 1.2, we want $\alpha$ and $r_0$ to be small enough so that the desired dynamical properties, i.e. specification, regularity for potential, etc. will hold for the good collection of orbit segments. For the details on how small the range is, see the end of $\mathsection 3$.

One benefit that \cite{Bow75-1} and \cite{CT16} bring us is to construct the unique equilibrium state as a Gibbs measure. In \cite{Bow75-1}, the lower Gibbs property is essential in ruling out the mutually singular equilibrium states. This approach is generalized in \cite{CT16}, in which Climenhaga and Thompson derive the lower Gibbs property of equilibrium state for the ``essential collection of orbit segments'' which dominates in pressure as well as an uniform upper Gibbs property for all orbit segments. In this paper, we are able to deduce a non-uniform version of upper and lower Gibbs property for all orbit segments at all scales. Based on this property and the entropy density, we are able to deduce the large deviation principle for the equilibrium state of Katok map for $t\varphi^{geo}$. In general, the large deviation principle describes the exponential rate of convergence of time average to the space average with respect to a given measure. The following theorem is proved in $\mathsection 8$.

\begin{theorem}
The unique equilibrium state derived in Theorem 1.1 and 1.2 has the level-2 large deviations principle.
\end{theorem}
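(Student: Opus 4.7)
The plan is to establish the level-2 LDP with the standard rate function
$$q(\mu) = \begin{cases} P(\varphi) - h_\mu(\widetilde{G}) - \int \varphi\, d\mu & \text{if } \mu \text{ is } \widetilde{G}\text{-invariant,}\\ +\infty & \text{otherwise,}\end{cases}$$
so that the theorem reduces to proving, for the unique equilibrium state $\mu_\varphi$ and the empirical distribution $\mathcal{E}_n(x)=\frac{1}{n}\sum_{k=0}^{n-1}\delta_{\widetilde{G}^k x}$, the upper bound $\limsup \frac{1}{n}\log\mu_\varphi\{\mathcal{E}_n\in\mathcal{K}\}\le -\inf_{\mathcal{K}} q$ for closed $\mathcal{K}$ and the matching lower bound on open sets $\mathcal{U}$. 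The framework I have in mind is that of Pfister--Sullivan / Comman, adapted as in Climenhaga--Thompson to non-uniform Gibbs estimates, with the two ingredients advertised in the introduction: the non-uniform two-sided Gibbs property of $\mu_\varphi$ on all orbit segments at all scales, and entropy density of ergodic measures among invariant ones.

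For the upper bound, I would cover $\{x : \mathcal{E}_n(x)\in\mathcal{K}\}$ by Bowen balls $B_n(x_i,\epsilon)$ centered at a maximal $(n,\epsilon)$-separated set and apply the upper Gibbs estimate $\mu_\varphi(B_n(x,\epsilon))\le K(\epsilon,x)\exp(S_n\varphi(x)-nP(\varphi))$. Replacing the exponent by its supremum over nearby empirical measures and using the variational principle restricted to a weak$^*$-neighborhood of $\mathcal{K}$ gives the bound $-\inf_{\mathcal{K}} q$ after a limit in $\epsilon$. The non-uniform prefactor $K(\epsilon,x)$ is absorbed by stratifying the torus into sub-level sets of $K$ and using that, by the pressure gap, the portion of phase space with large prefactor carries subexponentially small $\mu_\varphi$-mass.

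For the lower bound, by standard reductions (approximation of open sets by convex combinations and entropy density of ergodic measures) it suffices to prove, for each ergodic $\nu$ and each weak$^*$-neighborhood $\mathcal{U}$ of $\nu$, that
$$\liminf_{n\to\infty} \frac{1}{n}\log\mu_\varphi\{x : \mathcal{E}_n(x)\in\mathcal{U}\} \ge -q(\nu).$$
Specification (which the Katok map inherits from the topological conjugacy with a linear Anosov map) combined with the Katok entropy formula produces, at scale $\epsilon$, at least $\exp(n(h_\nu-\gamma))$ many $(n,\epsilon)$-separated points whose empirical distribution lies in $\mathcal{U}$ and whose Birkhoff sums are within $n\gamma$ of $n\int\varphi\,d\nu$. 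Applying the lower Gibbs property on the good collection to each such Bowen ball yields $\mu_\varphi\{\mathcal{E}_n\in\mathcal{U}\}\ge e^{n(h_\nu+\int\varphi d\nu-P(\varphi)-O(\gamma))}$, which is $e^{-nq(\nu)}$ after letting $\gamma\to 0$.

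The hard part is the non-uniform behavior of the Gibbs constants near the neutral fixed point $\underaccent{\bar}{0}$. On the upper side, the prefactor $K(\epsilon,x)$ deteriorates for orbits that linger near $\underaccent{\bar}{0}$; on the lower side, one must verify that the specification-based construction of separated points can be arranged to use only orbit segments in the good collection from Theorems 1.1--1.2, where the lower Gibbs bound holds with controlled constants. Both issues are resolved by exploiting the pressure gap $P(\delta_{\underaccent{\bar}{0}})<P(\varphi)$ (respectively $t<1$), which forces $\mu_\varphi$-a.e.\ trajectory to be tracked by good orbit segments with uniform frequency, so that the non-uniform corrections enter only subexponentially and do not pollute the rate function. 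Once these controls are in place, the upper and lower bounds assemble into the level-2 LDP with rate function $q$.
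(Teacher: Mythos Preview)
Your framework (Pfister--Sullivan energy functions plus entropy density) matches the paper's, but the execution diverges at the key technical point and your lower-bound plan has a real gap.

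You propose to obtain the lower bound by producing separated points with empirical measure near a given ergodic $\nu$ \emph{and lying in the good collection} $\mathscr{G}(r)$, then applying the lower Gibbs estimate available there. This cannot work when $\nu$ is close to $\delta_{\underaccent{\bar}{0}}$: any orbit segment whose empirical measure approximates such a $\nu$ spends almost all its time in the perturbed neighborhood, so by the very definition of $\mathscr{G}(r)$ it is \emph{not} in the good collection. The pressure gap tells you that $\mu_\varphi$-typical orbits are good, but the lower LDP bound must hold for \emph{every} ergodic $\nu$, including those supported near the origin; invoking the pressure gap here is circular.

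The paper sidesteps this by proving a \emph{global} weak lower Gibbs estimate (Proposition~8.1): for every $(x,n)\in\mathbb{T}^2\times\mathbb{N}$,
\[
\mu_\varphi\bigl(B_n(x,6\epsilon)\bigr)\ \ge\ Q\,e^{-\zeta(n)}\,e^{-nP(\varphi)+S_n\varphi(x)},
\]
where $\zeta(n)=\sup_{x,\,y\in B_n(x,100\epsilon)}|S_n\varphi(x)-S_n\varphi(y)|$ and $\zeta(n)/n\to 0$ by Lemma~5.6. The mechanism is not the pressure gap but \emph{global} specification (inherited from the conjugacy to $f_A$) combined with local product structure: one glues an arbitrary segment $(x,n)$ between two good segments from $\mathscr{G}(r)$, and the Birkhoff-sum error on the middle piece is controlled by $\zeta(n)$, which is sublinear because stable/unstable distances shrink (non-uniformly but monotonically). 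This is exactly the lower-energy condition (8.4), so Theorem~3.1 of \cite{PS05} applies directly.

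On the upper side you also misdiagnose the non-uniformity. The upper Gibbs constant from \cite{CT16} is uniform in $x$; the only defect is that the estimate involves $\Phi_{6\epsilon}(x,n)$ rather than $S_n\varphi(x)$, and their difference is again bounded by $\zeta(n)$, uniformly in $x$. No stratification is needed: one simply absorbs $e^{\zeta(n)}$ as a subexponential factor and reads off the upper-energy condition (8.7), then applies Theorem~3.2 of \cite{PS05}. Entropy density is handled as you suggest (conjugacy to Anosov, or the Gorodetski--Pesin argument in \S8.3).
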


The uniqueness result also helps us to study the multifractal spectra of level sets of Lyapunov exponents by estimating the dimension from below and giving the exact entropy. In $\mathsection 9$, we prove the following theorem:

\begin{theorem}
Let $\mathscr{P}(t):=P(t\varphi^{geo})$, $\alpha_1:=\lim_{t\rightarrow -\infty}D^{+}\mathscr{P}(t)$ and also $\alpha_2:=D^{-}\mathscr{P}(1)$. For all $\alpha\in (\alpha_1,0]$, $L(-\alpha)$ is non-empty. Moreover, we have its entropy to satisfy $h(L(-\alpha))=\mathscr{E}(\alpha)$, where $\mathscr{E}(\alpha)$ is the Legendre transform of $\mathscr{P}$ at $\alpha$ (see $\mathsection 9.1$ for the definition). When $\alpha \in (\alpha_1,0)$, the Hausdorff dimension of $L(-\alpha)$ satisfies $d_H(L(-\alpha))\geq \frac{-2\mathscr{E}(\alpha)}{\alpha}$ In particular, when $\alpha\in [\alpha_2,0)$, we have $d_H(-\alpha)=2$.
\end{theorem}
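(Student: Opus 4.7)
The strategy is to use the unique equilibrium states $\mu_t$ from Theorem~1.2 as witness measures for each level set $L(-\alpha)$, with Legendre duality providing the quantitative bridge to $\mathscr{P}$. Uniqueness forces $\mu_t$ to be ergodic, and the standard fact that a unique equilibrium state makes the pressure differentiable gives
\[
\mathscr{P}'(t)=\int \varphi^{geo}\,d\mu_t = -\lambda(\mu_t)
\]
for every $t \in (-\infty,1)$, where $\lambda(\mu_t)$ denotes the positive Lyapunov exponent. Convexity of $\mathscr{P}$ makes $\mathscr{P}'$ nondecreasing, so its image on $(-\infty,1)$ is the interval $(\alpha_1,\alpha_2)$. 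For each $\alpha$ in this interval, choosing $t$ with $\mathscr{P}'(t)=\alpha$ and applying Birkhoff to $\varphi^{geo}$ yields $\mu_t(L(-\alpha))=1$, which already establishes non-emptiness; for $\alpha\in[\alpha_2,0)$ non-emptiness is similarly witnessed by $\mu_{SRB}=\lim_{t\to 1^-}\mu_t$ together with orbits that revisit $\underaccent{\bar}{0}$ with prescribed frequency, and $\underaccent{\bar}{0}\in L(0)$ takes care of $\alpha=0$.

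For the entropy identity $h(L(-\alpha))=\mathscr{E}(\alpha)$, the lower bound on $(\alpha_1,\alpha_2)$ is immediate from the equilibrium identity $h_{\mu_t}(\widetilde{G})=\mathscr{P}(t)-t\mathscr{P}'(t)=\mathscr{P}(t)-t\alpha=\mathscr{E}(\alpha)$ together with the variational principle, since $\mu_t(L(-\alpha))=1$. The matching upper bound is a Takens--Verbitskiy / Bowen-type statement: any invariant $\nu$ concentrated on $L(-\alpha)$ satisfies $h_\nu+t\int\varphi^{geo}\,d\nu\leq\mathscr{P}(t)$, so $h_\nu\leq\mathscr{P}(t)-t\alpha$ for every $t$ and hence $h_\nu\leq\mathscr{E}(\alpha)$; the supremum of such entropies controls $h(L(-\alpha))$ by a saturation argument based on expansivity of $\widetilde{G}$ and the specification property on the essential collection of orbit segments used in the uniqueness proof. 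On $[\alpha_2,0]$ the Legendre transform reduces to $\mathscr{E}(\alpha)=\mathscr{P}(1)-\alpha=-\alpha$, which I plan to realize by the affine family $s\,\mu_{SRB}+(1-s)\delta_{\underaccent{\bar}{0}}$ to recover the entropy identity from standard convex-combination / generic-points arguments.

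For the dimension lower bound when $\alpha\in(\alpha_1,0)$, I apply the Ledrappier--Young formula on $\mathbb{T}^2$ to $\mu_t$. Because the Katok map is a $C^\infty$ radial slowdown of an area-preserving Anosov automorphism, $\widetilde{G}$ preserves a smooth volume form, so $\int \log|\det D\widetilde{G}|\,d\mu=0$ for every invariant $\mu$. In the two-dimensional hyperbolic setting this forces the symmetry $\lambda^u(\mu_t)=-\lambda^s(\mu_t)=-\alpha$, and the Ledrappier--Young formula then reduces to
\[
d_H(\mu_t)=\frac{h_{\mu_t}}{\lambda^u(\mu_t)}+\frac{h_{\mu_t}}{|\lambda^s(\mu_t)|}=\frac{-2\mathscr{E}(\alpha)}{\alpha}.
\]
Since $\mu_t$ is carried by $L(-\alpha)$, this yields $d_H(L(-\alpha))\geq -2\mathscr{E}(\alpha)/\alpha$. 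At $\alpha=\alpha_2$ the same formula applied to $\mu_{SRB}$ (which is equivalent to Lebesgue) gives dimension $2$, matching $-2\mathscr{E}(\alpha_2)/\alpha_2=2$ computed from $\mathscr{E}(\alpha_2)=-\alpha_2$.

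The main obstacle is the full-dimension claim $d_H(L(-\alpha))=2$ for $\alpha\in(\alpha_2,0)$, where no ergodic equilibrium state in the family $\{\mu_t\}$ realizes the exponent $-\alpha$, and non-ergodic convex combinations with $\delta_{\underaccent{\bar}{0}}$ fail to concentrate on $L(-\alpha)$ by ergodic decomposition. I plan to handle this by a direct geometric construction: on SRB-generic local unstable manifolds (which themselves give full dimension inside $L(-\alpha_2)$), thread sojourns near $\underaccent{\bar}{0}$ of prescribed asymptotic frequency, chosen so that the time-averaged unstable expansion rate along the perturbed orbit equals exactly $-\alpha$. The explicit slowdown ODE controlling $\widetilde{G}$ near $\underaccent{\bar}{0}$ (see $\mathsection 3$) provides the return-time distortion estimates needed to push this through, and a Moran-cover / Pesin-block construction transverse to the stable foliation then produces a subset of $L(-\alpha)$ of Hausdorff dimension $2$. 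This geometric step is the most delicate part of the argument; once in place, the rest of the proof is a clean combination of Theorem~1.2, Legendre duality, and the two-dimensional Ledrappier--Young formula.
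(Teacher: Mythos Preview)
Your treatment of the range $\alpha\in(\alpha_1,\alpha_2)$ is essentially the paper's: equilibrium states $\mu_t$, differentiability of $\mathscr{P}$ from uniqueness, the identity $\mathscr{P}'(t)=\int\varphi^{geo}\,d\mu_t$, and Young's dimension formula (your Ledrappier--Young step is exactly the paper's appeal to \cite{You82}, using that $\widetilde{G}$ is area-preserving so the two exponents have equal absolute value). The entropy identity on $[\alpha_2,0]$ via the affine family $s\mu'+(1-s)\delta_{\underaccent{\bar}{0}}$ and Takens--Verbitskiy also matches Lemma~9.10 and Proposition~9.12 of the paper.

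The divergence is on the interval $[\alpha_2,0)$, and here there is a real gap in your proposal. You correctly observe that no $\mu_t$ with $t<1$ has exponent $-\alpha$ in this range, and that convex combinations with $\delta_{\underaccent{\bar}{0}}$ do not sit on $L(-\alpha)$; you then promise a direct construction (threaded sojourns near $\underaccent{\bar}{0}$, Moran covers transverse to the stable foliation) that you yourself flag as ``the most delicate part''. This is not carried out, and making it rigorous would require fine control over return-time statistics, distortion through the slowdown region, and a genuine Pesin-block argument --- none of which is trivial here. The paper bypasses all of this with a single device: an increasing exhaustion by basic sets $\widetilde{\Lambda}_n\subset\widetilde{\Lambda}_{n+1}$ (built following Burns--Gelfert \cite{BG14}) with $\mathscr{P}_{\widetilde{\Lambda}_n}(t)\nearrow\mathscr{P}(t)$. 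Since each $\mathscr{P}_{\widetilde{\Lambda}_n}$ is smooth (uniformly hyperbolic thermodynamics), the mean value theorem produces, for every $\alpha\in[\alpha_2,0)$ and all large $n$, a point $t^n_\alpha$ with $\mathscr{P}_{\widetilde{\Lambda}_n}'(t^n_\alpha)=\alpha$; the associated equilibrium state on $\widetilde{\Lambda}_n$ then witnesses $L(-\alpha)\cap\widetilde{\Lambda}_n\neq\emptyset$ and, via Young's formula on the basic set, gives $\dim_H(L(-\alpha)\cap\widetilde{\Lambda}_n)\geq -2\mathscr{E}_{\widetilde{\Lambda}_n}(\alpha)/\alpha$. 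A short Dini-type argument (Lemma~9.8) shows $\mathscr{E}_{\widetilde{\Lambda}_n}(\alpha)\to\mathscr{E}(\alpha)$, and since $\mathscr{E}(\alpha)=-\alpha$ on $[\alpha_2,0)$ the dimension lower bound is exactly $2$. This approach handles non-emptiness and dimension simultaneously and uniformly across $(\alpha_1,0)$, without any bespoke geometric construction. Your plan would have to reproduce this conclusion by hand, and as written it does not.
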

Here, $L(-\alpha)$ is the set of Lyapunov-regular points whose positive forward and backward Lyapunov exponent are both $-\alpha$ with $h(L(-\alpha))$ and $d_H(L(-\alpha))$ being its topological entropy and Hausdorff dimension. We notice that due to the existence of neutral fixed point, the pressure function $\mathscr{P}(t)$ goes through a phase transition at $t=1$, in particular $\alpha_2<0$. See $\mathsection 9$ for the definition of $\mathscr{E}(\alpha)$ and other details.

We briefly compare the above results to those in \cite{PSZ16}. They obtain the results of Theorem 1.2 when $t\in (t(\alpha,r_0),1)$ with $t(\alpha,r_0)\rightarrow -\infty$ when $\alpha,r_0\rightarrow 0$. Their question on if the range of t can be extended to $-\infty$ for a fixed Katok map is answered here, as the orbit decomposition approach will allow us to take $t(\alpha,r_0)=-\infty$ for fixed $\alpha,r_0$, which is the optimal uniqueness result for equilibrium states. Besides that, the large deviations and multifractal results are well suited to the specification approach and uniqueness results. On the other hand, \cite{PSZ16} emphasizes the statistical properties of the equilibrium state by the nature of the Tower construction. We refer the reader to $\mathsection 7$ for more technical details of the comparison.

The structure of the paper is as follows. In $\mathsection 2$, we introduce the orbit decomposition technique that we apply throughout the paper. In $\mathsection 3$, we briefly introduce the Katok map and deduce some relevant properties that will be used in constructing the orbit decomposition. In $\mathsection 4$, we establish the decomposition. In $\mathsection 5$, we prove that the essential collection in the decomposition dominates in pressure under certain conditions. In $\mathsection 6$, we prove the Bowen property for H\"older continuous potential functions and geometric t-potentials. In $\mathsection 7$, we conclude our Theorem 1.1 and 1.2 as our main results on uniqueness of equilibrium states. In $\mathsection 8$, we deduce the large deviation principle for the equilibrium states in Theorem 1.1 and 1.2 and thus deduce Theorem 1.3. In $\mathsection 9$, we study the multifractal spectra of the Katok map in terms of topological entropy and Hausdorff dimension and prove Theorem 1.4.

\section{Main technique}

We state the preliminary definitions needed for the technique and introduce how to apply the technique to deduce the desired thermodynamic formalism.

\subsection{Pressure}

Let $X$ be a compact metric space and $f:X\rightarrow X$ be a continuous map of finite topological entropy. Given a continuous real-valued function $\varphi$ on $X$ and call it potential(function). Denote the space of all $f$-invariant Borel probability measure on $X$ by $\mathcal{M}(f)$ and $\mathcal{M}_{e}(f)\subset \mathcal{M}(f)$ the ergodic ones. 

We write
$$S_n(\varphi)=S_n^{f}(\varphi)=\sum_{k=0}^{n-1}\varphi(f^kx).$$
Given $n\in \mathbb{N}$ and $x,y\in X$, we define 
$$d_n(x,y)=\max_{0\leq k\leq n-1}{d(f^k(x),f^k(y))}.$$
The Bowen ball of order $n$ at center $x$ with radius $\epsilon$ is defined as
$$B_n(x,\epsilon)=\{y\in X:d_n(x,y)<\epsilon\}.$$
We need to separate points using Bowen balls. Suppose $Y \subset X$ and $\delta>0$. We say $E \subset Y$ is a ($\delta,n$)-separated set if $d_n(x,y)\geq \delta$ for all $x\neq y$, $x,y\in E$. Write
$$\Lambda_n^{sep}(Y,\varphi,\delta;f)=\sup\left\{\sum_{x\in E}e^{S_n\varphi(x)} : E\subset Y \text{ is an } (\delta,n) \text{-separated set}\right\}.$$
The pressure of $\varphi$ on $Y$ is defined as
$$P(Y,\varphi;f)=\lim_{\delta \rightarrow 0}\limsup_{n\rightarrow \infty}\frac{1}{n}\log\Lambda_n^{sep}(Y,\varphi,\delta;f).$$
In particular, when $Y=X$, we write $P(X,\varphi;f)$ as $P(\varphi)$.

More generally, sometimes we must consider the pressure of a collection of orbit segments. As defined in \cite{CT16}, we interpret $\mathscr{D}\subset X \times \mathbb{N}$ as a collection of finite orbit segments and write $\mathscr{D}_n=\{ x\in X : (x,n)\in \mathscr{D} \}$. Consider the partition sum
$$\Lambda_n^{sep}(\mathscr{D},\varphi,\delta;f)=\sup\left\{\sum_{x\in E}e^{S_n\varphi(x)} : E\subset \mathscr{D}_n \text{ and is an } (\delta,n) \text{-separated set}\right\}$$
which enables us to define $P(\mathscr{D},\varphi;f)$ in the same way.

The variational principle from \cite{Wal82} says that:

$$P(\varphi)=\sup_{\mu \in \mathcal{M}(f)}\left\{h_{\mu}(f)+\int{\varphi}d\mu\right\}=
\sup_{\mu \in \mathcal{M}_{e}(f)}\left\{h_{\mu}(f)+\int{\varphi}d\mu\right\}.$$
A measure achieving the supremum is called equilibrium state. One of the main topics in this paper is to study the existence and uniqueness of this object.

Later in the estimate on pressure gap, we have to consider a following variation of the definition of pressure, which first appears in \cite{CT16}. Given a fixed scale $\epsilon>0$, we define
$$\Phi_{\epsilon}(x,n):= \sup_{y\in B_n(x,\epsilon)}\sum_{k=0}^{n-1}\varphi(f^ky).$$
From the above definition we see immediately that $\Phi_{0}(x,n)=\sum_{k=0}^{n-1}\varphi(f^kx)$.

For $\mathscr{D}\subset X \times \mathbb{N}$, we write
$$\Lambda_n^{sep}(\mathscr{D},\varphi,\delta,\epsilon;f)=\sup\left\{\sum_{x\in E}e^{\Phi_{\epsilon}(x,n)} : E\subset \mathscr{D}_n \text{ and is an } (\delta,n) \text{-separated set}\right\}.$$
The pressure of $\varphi$ on $\mathscr{D}$ at scale $\delta, \epsilon$ is given by
$$P(\mathscr{D},\varphi,\delta,\epsilon;f)=\limsup_{n\rightarrow \infty}\frac{1}{n}\log\Lambda_n^{sep}(\mathscr{D},\varphi,\delta,\epsilon;f).$$
Again, when $\mathscr{D}$ is the entire $X \times \mathbb{N}$, we simply write $P(\varphi,\delta,\epsilon)$.

\subsection{Specification, expansivity and regularity}

\subsubsection{Specification}

Specification describes the property that different Bowen balls can be connected by an orbit segment with uniform gap.

\begin{definition}
A collection of orbit segments $\mathscr{D}\subset X \times \mathbb{N}$ has specification at scale $\epsilon$ if there exists $\tau=\tau(\epsilon)\in \mathbb{N}$ such that for every $\{(x_j,n_j): 1\leq j \leq k\}\subset \mathscr{D}$, there is a point $x$ in 
$$\bigcap_{j=1}^{k}f^{-m_{j-1}}B_{n_j}(x_j,\epsilon),$$
where $m_0=0$ and $m_j=m_{j-1}+n_j+\tau$ for $j\geq 1$.

\end{definition}

Sometimes we are only interested in connecting orbit segments that are long enough. In these situations, it's natural to come up with the following weak version of specification.

\begin{definition}
A collection of orbit segments $\mathscr{D}\subset X \times \mathbb{N}$ has tail specification at scale $\epsilon$ if there is some $N_0\in \mathbb{N}$ such that $\mathscr{D}_{\geq N_0}:=\{(x,n)\in \mathscr{D} | n\geq N_0\}$ has specification.
\end{definition}

\subsubsection{Expansivity}

\begin{definition}
We write the set of non-expansive points at scale $\epsilon$ as $\text{NE}(\epsilon):= \{ x\in X: \Gamma_{\epsilon}(x)\neq \{x\}\}$. The map $f$ is expansive at scale $\epsilon$ if $\text{NE}(\epsilon)=\emptyset$. A $f$-invariant Borel probability measure is called almost expansive at scale $\epsilon$ if $\mu(\text{NE}(\epsilon))=0$.
\end{definition}

To see whether the set of non-expansive points at some scale is negligible regarding pressure, we need the following quantity. This is known as the pressure of obstructions to expansivity in \cite{CT16},\cite{CFT17-1},\cite{CFT17-2}.
$$P_{\exp}^{\perp}(\varphi,\epsilon)
=\sup_{\mu\in \mathcal{M}_e(f)}\Big\{ h_{\mu}(f)+\int\varphi d\mu : \mu(\text{NE}(\epsilon))>0 \Big\}
$$
From the definition we notice that if $P_{\mu}(\varphi)> P_{\exp}^{\perp}(\varphi,\epsilon)$ and $\mu$ is $f$-invariant and ergodic, then $\mu$ is almost expansive at scale $\epsilon$.

\subsubsection{Regularity for potential}

The following regularity for the potential function is required in our case.

\begin{definition}
Given $\mathscr{D}\subset X \times \mathbb{N}$, we say a function $\varphi:X\rightarrow \mathbb{R}$ has Bowen property on $\mathscr{D}$ at scale $\epsilon$ if there exists a constant $K=K(\varphi,\mathscr{D},\epsilon)$ such that $|S_n\varphi(x)-S_n\varphi(y)|<K$ for any $(x,n)\in \mathscr{D}$ and $y\in B_n(x,\epsilon)$. A function $\varphi$ has Bowen property on $\mathscr{D}$ if it has Bowen property on $\mathscr{D}$ at some scale (therefore smaller scale as well).
\end{definition}

\subsection{Orbit decomposition technique}

Now we have all the ingredients that we need to deduce the uniqueness of equilibrium states. The following orbit decomposition construction, which is first completely introduced in \cite{CT16}, will be the main technique that we will apply throughout the paper.

For a compact metric space $X$ and $f:X\rightarrow X$ being at least $C^{1+\alpha}$ in our case, a decomposition for a pair $(X,f)$ consists of three collections $\mathscr{P},\mathscr{G},\mathscr{S}\subset X \times \mathbb{N}$ and three functions $p,g,s:X \times \mathbb{N}\rightarrow \mathbb{N}$ such that for every $(x,n)\in X \times \mathbb{N}$, the values $p=p(x,n)$, $g=g(x,n)$, $s=s(x,n)$ satisfy $n=p+g+s$ and

$$(x,p)\in \mathscr{P},\quad (f^p(x),g)\in \mathscr{G},\quad (f^{p+q}(x),s)\in \mathscr{S}.$$
Meanwhile, for each $M\in \mathbb{N}$, write $\mathscr{G}^M$ for the set of orbit segments $(x,n)$ such that $p\leq M$, $s\leq M$. Here $(x,0)$ is assumed to be contained in all of three collections. This basically means some elements in the decomposition can be empty. The following theorem (Theorem 5.6 in \cite{CT16}) is the main tool that we apply in this paper.

\begin{theorem}
Let $X,f,\varphi$ be as above. Suppose there is an $\epsilon>0$ such that $P_{\exp}^{\perp}(\varphi,100\epsilon)<P(\varphi)$ and $(X,f)$ admits a decomposition $(\mathscr{P},\mathscr{G},\mathscr{S})$ with the following properties:

\begin{enumerate}
\item For each $M\geq 0$, $\mathscr{G}^M$ has tail specification at scale $\epsilon$.
\item $\varphi$ has the Bowen property at scale $100\epsilon$ on $\mathscr{G}$.
\item $P(\mathscr{P}\cup {\mathscr{S}},\varphi,\epsilon,100\epsilon)<P(\varphi)$
\end{enumerate}
Then there is an unique equilibrium state for $\varphi$.
\end{theorem}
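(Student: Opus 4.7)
The plan is to follow Bowen's classical strategy, adapted to the orbit-decomposition setting: construct a distinguished equilibrium state enjoying two-sided Gibbs bounds, show that any equilibrium state must obey the corresponding bounds, and combine these bounds with almost-expansivity to force coincidence.

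First I would fix $M$ large enough that $P(\mathscr{P}\cup\mathscr{S},\varphi,\epsilon,100\epsilon)<P(\varphi)-\eta$ for some $\eta>0$, so that $\mathscr{G}^M$ carries the full pressure. On each slice $\mathscr{G}^M_n$ pick a maximal $(n,\delta)$-separated set $E_n$ and form the empirical measures
$$\mu_n := \frac{1}{n}\sum_{k=0}^{n-1}f^k_{*}\!\left(\frac{\sum_{x\in E_n} e^{S_n\varphi(x)}\delta_x}{\sum_{x\in E_n} e^{S_n\varphi(x)}}\right).$$
A weak$^{*}$ subsequential limit $\mu$ is $f$-invariant; the variational principle combined with the pressure gap forces $\mu$ to be an equilibrium state. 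Tail specification on $\mathscr{G}^M$ together with the Bowen property (constant $K$) then produces a \emph{lower Gibbs bound}
$$\mu(B_n(x,\epsilon))\;\geq\;C_1\,e^{-nP(\varphi)+S_n\varphi(x)}\qquad \text{for every } (x,n)\in \mathscr{G}^M,$$
by concatenating the orbit of $x$ with many other good segments to give $B_n(x,\epsilon)$ a definite exponential share of the partition sum at scale $\delta$.

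The technically heaviest step is an \emph{upper Gibbs bound} valid for every orbit segment: $\mu(B_n(x,\epsilon))\leq C_2\,e^{-nP(\varphi)+S_n\varphi(x)}$. Here one decomposes each orbit passing near $x$ as $\mathscr{P}$-prefix, $\mathscr{G}$-core, $\mathscr{S}$-suffix, and sums the resulting coarse Gibbs estimates over all admissible splittings $n=p+g+s$. The pressure gap $P(\mathscr{P}\cup\mathscr{S},\varphi,\epsilon,100\epsilon)<P(\varphi)$ makes the corresponding double series geometrically convergent, so the bound closes up with constants independent of $n$.

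Uniqueness then follows. Let $\nu$ be any ergodic equilibrium state; the hypothesis $P_{\exp}^{\perp}(\varphi,100\epsilon)<P(\varphi)$ guarantees $\nu$ is almost expansive at scale $100\epsilon$, and a Brin--Katok / covering argument upgrades the upper Gibbs bound above to hold $\nu$-a.e. If $\nu\neq \mu$, ergodicity forces mutual singularity, which is incompatible with comparing the lower Gibbs bound for $\mu$ against the upper Gibbs bound for $\nu$ on Bowen balls centered in $\mathscr{G}^M$. The main obstacle is the upper Gibbs step: one must juggle the scale mismatch (Bowen property at $100\epsilon$ versus separated sets at $\epsilon$), carefully track constants when summing over decompositions, and verify that the pressure-gap geometric series closes with constants uniform in $(x,n)$. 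Once both Gibbs bounds are in place, uniqueness is essentially a one-line contradiction.
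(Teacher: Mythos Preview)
The paper does not prove this theorem at all: it is quoted verbatim as Theorem~5.6 of \cite{CT16} and used throughout as a black box. There is therefore no ``paper's own proof'' to compare your proposal against.

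That said, your sketch is a faithful outline of the Climenhaga--Thompson argument, with one correctable inaccuracy. In \cite{CT16} the candidate measure $\mu$ is built from maximizing $(n,\delta)$-separated sets in \emph{all} of $X$, not in $\mathscr{G}^M_n$; the role of $M$ and the pressure gap is not to restrict the construction of $\mu$ but rather to ensure (via a counting lemma) that the partition sums over $X$ and over $\mathscr{G}^M$ differ only by a multiplicative constant, so that the good collection carries the full pressure. The paper itself recalls this construction in \S8 (``let $E_n\subset X$ be a maximizing $(n,5\epsilon)$-separated set for $\Lambda(X,n,5\epsilon)$''). Apart from this, your identification of the lower Gibbs bound on $\mathscr{G}^M$ via specification, the upper Gibbs bound via summing over decompositions with the pressure gap controlling the geometric series, and the final mutual-singularity contradiction using almost expansivity, is exactly the architecture of the proof in \cite{CT16}.
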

There is no specific meaning behind the constant $100\epsilon$, while we do require expansivity and regularity to be controlled in a much larger scale due to the multiple application of specification. In particular, all the estimates will be safe once regularity for potential holds at scale $100\epsilon$.

Here we remark that the transition time for $\mathscr{G}^M$ is dependent on the choice of $M$. Specification at all scales for $\mathscr{G}$ implies specification at all scales for $\mathscr{G}^M$ for any $M$ due to a simple argument in modulus of continuity (see \cite{CT16} for detail). For the Katok map we can obtain specification at any small scale due to its conjugacy to linear automorphism. Nevertheless, the conjugacy homeomorphism is not H\"older continuous, which makes the thermodynamic formalism of the Katok map different from the well-studied uniformly hyperbolic models.

We add a final remark on the term $P(\mathscr{P}\cup {\mathscr{S}},\varphi,\epsilon,100\epsilon)$, the two-scale pressure defined in $\mathsection 2.1$. In \cite{CFT17-1} where specification at all scale is not expected, the authors put a variation term in the pressure gap estimate. This variation term can be obtained by breaking down the two scale pressure. In fact, it is not hard to see that $P(\mathscr{D},\varphi,\epsilon,100\epsilon)=P(\mathscr{D},\varphi,\epsilon)$ when $\varphi$ has Bowen property on $\mathscr{D}$ at scale $100\epsilon$. In our case, though Bowen property does not hold on $\mathscr{P}\cup {\mathscr{S}}$, we will give an argument in $\mathsection 5$ using the local product structure to remove the $100\epsilon$ term.

\section{The Katok map and its properties}

We collect the materials for the Katok map that we will need to build the decomposition with desired properties. The Katok Map is a $C^\infty$ diffeomorphism  of $\mathbb{T}^2$ which preserves Lebesgue measure and is non-uniformly hyperbolic. Katok \cite{Kat79} originally constructed the map to verify the existence of $C^\infty$ area-preserving Bernoulli diffeomorphisms of $\mathbb{D}^2$ that are sufficiently flat near $\partial\mathbb{D}^2$.

Consider the automorphism of $\mathbb{T}^2$ given by $A$ = $\begin{bmatrix}
2 & 1\\
1 & 1\\
\end{bmatrix}$, which is locally the time-one map of generated by the local flow of the following differential system:
$$\frac{ds_1}{dt}=s_1\log\lambda , \quad \frac{ds_2}{dt}=-s_2\log\lambda.$$
where $(s_1,s_2)$ is the coordinate representation in the eigendirections of $A$ and $\lambda$ equals the greater eigenvalue of $A$. We slow down the trajectories of the flow in a neighborhood of origin as follows: Choose a number $0<\alpha<1$ and a function $\psi:[0,1]\rightarrow [0,1]$ satisfying:

(1) $\psi$ is $C^\infty$ everywhere except for the origin.

(2) $\psi (0)=0$ and $\psi(r_0)=1$ for some $0<r_0<1$ and $r_0$ is close to 0.

(3) $\psi '(x)\geq 0$ and is non-increasing.

(4) $\psi(u)=(u/r_0)^{\alpha}$ for $0\leq u \leq \frac{r_0}{2}$.

\noindent where $r_0$ is very small. Let $D_r=\{(s_1,s_2):s_1^2+s_2^2\leq r^2\}$ and $\lambda$ be the larger eigenvalue of $A$. We also define $r_1={r_0}\log\lambda$. Now the trajectories are slowed down in $D_{r_1}$ at the rate of $\psi$, which induces the following differential system:

$$\frac{ds_1}{dt}=s_1\psi(s_1^2+s_2^2)\log\lambda , \quad \frac{ds_2}{dt}=-s_2\psi(s_1^2+s_2^2)\log\lambda.$$

Denote the time-one map of the local flow generated by this differential system by $g$. From the choice of $r_1$ and the assumption that $r_0$ is small one could easily see that the domain of $g$ contains $D_{r_1}$. Moreover, $f_A$ and $g$ coincide in some neighborhood of $\partial{D_{r_1}}$. Therefore, the following map
$$G(x)=\begin{cases}
A(x) \hfill \qquad \text{if } x\in \mathbb{T}^2\setminus D_{r_1}\\
g(x) \hfill \qquad \text{if } x\in D_{r_1}
\end{cases}$$
defines a homeomorphism of 2-torus which is $C^\infty$ everywhere except for the origin. One can verify that $G(x)$ preserves probability measure $d\nu=\kappa_0^{-1}\kappa dm$, where $\kappa$ is defined by
$$\kappa(s_1,s_2):=\begin{cases}
(\psi(s_1^2+s_2^2))^{-1}  \qquad \text{if } (s_1,s_2)\in D_{r_0}\\
1 \hfill \qquad \text{elsewhere}
\end{cases}$$
and $\kappa_0$ is the normalizing constant.

Furthermore, $G$ is perturbed to an area-preserving $C^\infty$ diffeomorphism  via a coordinate change. Define $\phi$ in $D_{r_1}$ as

$$\phi(s_1,s_2)=\frac{1}{\sqrt{\kappa_0(s_1^2+s_2^2)}}(\int_0^{s_1^2+s_2^2}\frac{du}{\psi(u)})^{\frac{1}{2}}(s_1,s_2),$$
and set $\phi$ identity outside $D_{r_0}$. $\phi$ transfers the measure $\nu$ into area and the map $\widetilde{G}:=\phi \circ G \circ \phi^{-1}$ is thus area-preserving. Moreover, one can check $\widetilde{G}$ is a $C^\infty$ diffeomorphism on 2-torus. It is called the Katok map. 

We add a comment on the property of $\phi$. Observe that $\phi$ is conformal in the sense of being a scalar product of identity at each point. Moreover, by writing $\phi(s_1,s_2)$ as $\frac{1}{\sqrt{\kappa_0}}(\int_0^{r^2}\frac{du}{\psi(u)})^{\frac{1}{2}}(\frac{s_1}{\sqrt{s_1^2+s_2^2}},\frac{s_2}{\sqrt{s_1^2+s_2^2}})$ with $r^2:=s_1^2+s_2^2$ and differentiating in $r$, together with property (2) of $\psi$ and standard geometric argument, we conclude that there is a constant $C=C(\alpha,r_0)$ such that $\frac{d(\phi(s_1,s_2),\phi(s_1',s_2'))}{d((s_1,s_2),(s_1',s_2'))}\geq \frac{C}{\sqrt{\kappa_0}}$ for all $(s_1,s_2),(s_1',s_2')\in \mathbb{T}^2$ such that $(s_1,s_2)\neq (s_1',s_2')$. Since $\phi$ is invertible, respectively we have 
\begin{equation}
    \begin{aligned}
    \frac{d(\phi^{-1}(s_1,s_2),\phi^{-1}(s_1',s_2'))}{d((s_1,s_2),(s_1',s_2'))} \leq \frac{\sqrt{\kappa_0}}{C}.
    \end{aligned}
\end{equation}
This property will be useful when we deduce the regularity of geometric potential of $\widetilde{G}$ from the regularity of geometric potential of $G$ in $\mathsection 7$.

We also remark on the connections between $G$ and $\widetilde{G}$. Since $\widetilde{G}$ is conjugate to $G$ via a homeomorphism that is $C^{\infty}$ everywhere except at the origin, the dynamical properties of $G$ are inherited by $\widetilde{G}$. The only place where the properties of $G$ and $\widetilde{G}$ need to be distinguished is the regularity of $\varphi^{geo}$ and $\varphi_G^{geo}$, referring to the geometric potential of $\widetilde{G}$ and $G$ respectively. Essentially these are two different potentials, so we want to analyze them separately. The idea will be to first prove the regularity of $\varphi_G^{geo}$, then use the property of $\phi$ and the conjugacy between $G$ and $\widetilde{G}$ to obtain the one for $\varphi^{geo}$.

\begin{proposition}
Here we have some useful properties of the Katok map \cite{Kat79}:
\begin{enumerate}
\item 
The Katok map is topologically conjugate to $f_A$ via a homeomorphism $h$, i.e. $\widetilde{G}=h \circ f_A \circ h^{-1}$. In fact, it is in the $C^0$ closure of Anosov diffeomorphisms, which means it is a $C^0$ limit of a sequence of Anosov diffeomorphisms.
\item
It admits two transverse invariant continuous stable and unstable distributions $E^s(x)$ and $E^u(x)$ that integrate to continuous, uniformly transverse and invariant foliations $W^s(x)$ and $W^u(x)$ with smooth leaves. Moreover, they are the image of stable and unstable eigendirections of $f_A$ under $h$.
\item
Almost every $x$ with respect to area $m$ has two non-zero Lyapunov exponents, one positive in the direction of $E^u(x)$ and the other negative in the direction of $E^s(x)$. The only ergodic measure with zero Lyapunov exponents is $\delta_0$, the point measure at the origin.
\item
It is ergodic with respect to $m$.

\end{enumerate}
\end{proposition}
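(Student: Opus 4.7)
The plan is to reproduce Katok's original arguments from \cite{Kat79} by combining the explicit formulas for the slowdown with standard smooth ergodic theory; each of (1)--(4) has a natural route.

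For (1), the key observation is that $G$ coincides with $f_A$ outside $D_{r_1}$ and that inside $D_{r_1}$ the slowdown is simply a time reparametrization along the local flow of $f_A$ (the same vector field is rescaled by the positive scalar $\psi(s_1^2+s_2^2)$). Orbits of $G$ therefore trace out the same arcs, in the same order, as orbits of $f_A$, only with different speeds, and continuity of $\psi$ makes the resulting bijection of orbits into a homeomorphism $h_0$ conjugating $G$ to $f_A$. Composing with $\phi$ yields the conjugacy $h$ between $\widetilde{G}$ and $f_A$. The $C^0$-closure claim follows by replacing $\psi$ with a sequence $\psi_n$ bounded below by $1/n$: each such $\psi_n$ produces a uniformly hyperbolic map that is conjugate to $f_A$ via structural stability, and $\psi_n\to \psi$ uniformly. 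For (2), I would push forward the stable and unstable eigendirections of $f_A$ through $h$. Continuity of the distributions and foliations is automatic; smoothness of the leaves on $\mathbb{T}^2\setminus\{\underaccent{\bar}{0}\}$ uses that $\widetilde{G}$ is $C^\infty$ there and that the slowed flow keeps the $s_1$- and $s_2$-axes invariant, so each leaf is an integral curve of a smooth vector field and then extends continuously across the origin.

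For (3), outside $D_{r_0}$ the map equals $f_A$, so an orbit that spends a positive fraction of time outside $D_{r_0}$ accumulates non-zero Lyapunov exponents. For $x \ne \underaccent{\bar}{0}$ the forward orbit leaves $D_{r_0}$ in finite time (the slowed vector field still points outward along $s_1$), hence by Birkhoff no ergodic $\mu \ne \delta_{\underaccent{\bar}{0}}$ is supported inside $D_{r_0}$, and one gets a positive exponent for $\mu$-a.e.\ $x$ using the explicit expansion $\|Dg|_{E^u}\| = e^{\psi(r^2)\log\lambda}$; the negative exponent is symmetric. Only $\delta_{\underaccent{\bar}{0}}$ produces both exponents equal to zero. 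For (4), $\widetilde{G}$ preserves $m$ and is non-uniformly hyperbolic with respect to $m$ by (3), so Pesin theory decomposes $\mathbb{T}^2$ into countably many ergodic components; I would reduce to a single component by invoking that $W^s,W^u$ are homeomorphic images of the linear foliations of $f_A$ and hence minimal on $\mathbb{T}^2$, after which a standard Hopf-style argument across the two foliations yields ergodicity.

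The main obstacle is (3), specifically the statement that $\delta_{\underaccent{\bar}{0}}$ is the \emph{only} ergodic measure with vanishing Lyapunov exponents. An ergodic $\mu \ne \delta_{\underaccent{\bar}{0}}$ must leave $D_{r_0}$ with positive frequency, but controlling this frequency requires a quantitative recurrence estimate: orbits can spend arbitrarily long stretches deep inside the slowdown region, where $\psi(r^2)\log\lambda$ is arbitrarily small, and one must show these stretches are compensated by the bounded-below expansion accumulated when the orbit exits. This is essentially the same mechanism responsible for the phase transition of $\varphi_t$ at $t=1$ treated later in the paper, and I would handle it either via the explicit polynomial lower bound $\psi(u) = (u/r_0)^\alpha$ near $0$ from property~(4) of the definition of $\psi$, or via Katok's first-return calculation near the neutral fixed point.
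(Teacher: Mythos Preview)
The paper does not prove this proposition at all: it is stated with a reference to \cite{Kat79} and no argument is given, so there is nothing to compare your sketch against. These are exactly the structural facts about the Katok map that the rest of the paper takes as input.

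That said, a brief comment on your sketch. Your argument for (1) is too loose: the fact that the slowed-down flow and the original linear flow share the same orbit curves does \emph{not} by itself produce a topological conjugacy between their time-one maps $G$ and $f_A$; the discrete orbits are genuinely different, and a ``bijection of orbits'' is not the same as a conjugacy of maps. Katok's actual route (and the one implicit in the $C^0$-closure statement) is closer to what you mention second: interpolate $\psi$ by $\psi_n$ bounded away from $0$, obtain Anosov maps $G_n$ conjugate to $f_A$ by structural stability, and pass to the limit. Your discussion of (3) correctly isolates the real difficulty---showing that long sojourns near the origin cannot kill the Lyapunov exponent for any ergodic $\mu\neq\delta_{\underaccent{\bar}{0}}$---and your proposed remedies (the explicit form $\psi(u)=(u/r_0)^\alpha$ or a first-return estimate) are the right ingredients, though turning them into a complete proof requires the kind of quantitative return-time bound Katok carries out in \cite{Kat79}.
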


In Proposition 3.1, properties (1) and (2) hold for $G$ with $h$ replaced by $\psi^{-1}\circ h$ and properties (3) and (4) hold for $G$ with respect to $\nu$. To get prepared for building the decomposition, let us first prove some propositions that will help and lead to the construction. The Ma\~n\'e and Bonatti-Viana's versions can be found in \cite{CFT17-1},\cite{CFT17-2}.

\begin{definition}
The leaves $W^s$ and $W^u$ are said to have local product structure with constant $\kappa$ at scale $\delta$, $\delta>0$, if the following holds: For any $x,y\in \mathbb{T}^2$, $d(x,y)<\delta$, there is a unique $z\in W^s_{\kappa \delta}(x)\cap W^u_{\kappa \delta}(y)$. 
\end{definition}

\begin{proposition}
When $\alpha$, $\epsilon>0$ are sufficiently small, the leaves $W^s$, $W^u$ of $G$ have local product structure at scale $500\lambda\epsilon$ with a constant only depending on $\alpha$.
\end{proposition}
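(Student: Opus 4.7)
The plan is to derive local product structure from two ingredients: uniform transversality of $E^s$ and $E^u$ (with an angle bound depending only on $\alpha$) together with smoothness of individual leaves. Once uniform transversality is in hand, the unique intersection point $z\in W^s_{\kappa\cdot 500\lambda\epsilon}(x)\cap W^u_{\kappa\cdot 500\lambda\epsilon}(y)$ is produced by a standard implicit function / graph transform argument, and $\kappa$ can be taken proportional to $1/\sin\theta_0(\alpha)$, where $\theta_0(\alpha)$ is the uniform angular bound.

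The easy regime is outside $D_{r_1}$, where $G$ coincides with $A$: the leaves $W^s, W^u$ are straight lines in the $A$-eigencoordinates, and the angle between them is the universal constant $\theta_A$ determined by the eigendirections of $A$, so local product structure there reduces to that of a linear Anosov automorphism.

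The work is inside $D_{r_1}\setminus\{0\}$, where I would analyze $E^s$ and $E^u$ via the variational equation for the slowed-down flow. Because the velocity field $\psi(s_1^2+s_2^2)\log\lambda\cdot (s_1,-s_2)$ is a scalar multiple of the $A$-velocity field, the flow trajectories coincide geometrically with those of $A$: the hyperbolas $s_1 s_2 = c$, together with the coordinate axes as the global stable/unstable manifolds of the origin. A direct computation shows that the Jacobian $DF$ of the velocity field is diagonal on the axes (where $s_1 s_2 = 0$) and its off-diagonal entries off the axes are proportional to $\psi'(s_1^2+s_2^2)\cdot s_1 s_2$. Using the explicit form $\psi(u)=(u/r_0)^\alpha$ for $u\leq r_0/2$ and a Gronwall-type estimate for the integrated off-diagonal contribution along a trajectory, I would show that for $\alpha$ sufficiently small the invariant distributions $E^u(x), E^s(x)$ remain within an angle $O(\alpha)$ of the horizontal/vertical $A$-eigendirections. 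By compactness this produces a uniform lower bound $\theta_0(\alpha)>0$, independent of $r_0$ and $\epsilon$, on the angle between $E^s$ and $E^u$ on all of $\mathbb{T}^2$.

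Given $x, y$ with $d(x,y)<500\lambda\epsilon$, I would parametrize $W^s_{\kappa\cdot 500\lambda\epsilon}(x)$ and $W^u_{\kappa\cdot 500\lambda\epsilon}(y)$ by arc length; since individual leaves are $C^\infty$ (Proposition 3.1(2)) and their tangents are uniformly $\theta_0(\alpha)$-transverse, for $\epsilon$ small enough that curvature contributions are dominated by $\sin\theta_0(\alpha)$, the implicit function theorem yields the required unique intersection, completing the argument.

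The main obstacle is the quantitative angle estimate inside $D_{r_1}$: controlling the rotation of the invariant distributions under the variational equation along the hyperbolic trajectories. The point that makes this tractable is that the off-diagonal entries of $DF$ are proportional to the \emph{conserved} quantity $s_1 s_2$, hence bounded along each individual trajectory, and their integrated contribution can be shown to be $O(\alpha)$ using the specific form of $\psi$ near the origin. This is where the hypothesis that $\alpha$ is small is essential and where the claim that $\kappa$ depends only on $\alpha$ (and not on $r_0$) ultimately comes from.
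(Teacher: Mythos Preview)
Your overall strategy---prove a uniform angle bound $\theta_0(\alpha)$ between $E^s$ and $E^u$ via the variational equation inside $D_{r_1}$, then extract the intersection point from transversality---is the same as the paper's. The paper formalizes this as a cone-invariance statement: there is $\beta=\beta(\alpha)\to 0$ as $\alpha\to 0$ such that $dG$ preserves the cone $C_\beta(F^1,F^2)$ (and $dG^{-1}$ the opposite cone), and then the local product constant is $\kappa=\gamma(\beta)=\frac{1+\beta}{1-\beta}$. Uniqueness of the intersection is obtained by lifting to $\mathbb{R}^2$ rather than by the implicit function theorem, but that is cosmetic.

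The one place I would push back is your description of the angle estimate as a ``Gronwall-type estimate for the integrated off-diagonal contribution along a trajectory.'' This phrasing is risky: trajectories with small $|s_1s_2|$ spend arbitrarily long time in $D_{r_1}$, so any bound obtained by integrating a per-unit-time rotation over the transit time will blow up. The observation that $s_1s_2$ is conserved is correct but by itself insufficient, since the off-diagonal coefficient $s_1s_2\,\psi'(s_1^2+s_2^2)$ is bounded along each trajectory only by a constant depending on that trajectory. What actually makes the argument work is the algebraic relation $(s_1^2+s_2^2)\psi'(s_1^2+s_2^2)\leq 2\alpha\,\psi(s_1^2+s_2^2)$ (an equality with constant $\alpha$ on $\{u\leq r_0/2\}$), which says the off-diagonal term is uniformly dominated by $\alpha$ times the diagonal term. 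Feeding this into the ODE for the slope $\eta=\xi_2/\xi_1$ gives an invariant-region argument: one checks the sign of $d\eta/dt$ on the boundary $|\eta|=\beta$ and finds the cone is forward-invariant, with the explicit value $\beta=\frac{2\alpha}{2\alpha+1+\sqrt{4\alpha+1}}$. That is a phase-plane argument, not a Gronwall inequality, and it is exactly what delivers a bound independent of the transit time. If you replace your Gronwall step by this cone-invariance computation, your sketch becomes a complete proof matching the paper's.
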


Here we add a remark on the constant 500. There is no specific meaning behind the choice of this constant, while it has to be significantly large so that $500\lambda\epsilon$ will cover all the scales throughout the paper whose local product structure is needed (also $500\lambda\epsilon\ll 1$). We will see in the following sections that when $r_0$ and $\alpha$ are sufficiently small, the choice of $500\lambda\epsilon$ will work.

\begin{proof}
We want to show the leaves are contained respectively in $C_{\beta}(F^1,F^2)$ and $C_{\beta}(F^2,F^1)$, where $0<\beta<1$, $F^1,F^2$ are eigenspaces of $A$ with corresponding to $\lambda$ and $\lambda^{-1}$ and $C_{\beta}(F^1(x),F^2(x)):=\{x_1+x_2: x_1\in F^1(x), x_2\in F^2(x), \frac{\vert x_1 \vert}{\vert x_2 \vert}\leq \beta\}$. An application of Lemma 3.6 from \cite{CFT17-2}
will give local product structure with constant $\frac{1+\beta}{1-\beta}$. Moreover, we will prove $\beta$ is only depending on $\alpha$ (the exponent for the slow down function near origin) and converges to $0$ when $\alpha\rightarrow 0$.

We first prove the above cone argument, which is stated as the following lemma:
\begin{lemma}
There is a $0<\beta<1$ such that for all $x\in \mathbb{T}^2$, we have $$dG(C_{\beta}(F^1(x),F^2(x)))\subset C_{\beta}(F^1(G(x)),F^2(G(x))),$$
and
$$dG^{-1}(C_{\beta}(F^2(x),F^1(x)))\subset C_{\beta}(F^2(G^{-1}(x)),F^1(G^{-1}(x))),$$
where $F^1(x),F^2(x)$ are corresponding expanding and contracting eigenspaces in $T_x{\mathbb{T}^2}$. Moreover, $\beta$ only depends on $\alpha$ and $\beta\rightarrow 0$ when $\alpha \rightarrow 0$.
\end{lemma}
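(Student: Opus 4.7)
The plan is to split by the piecewise definition of $G$: on $\mathbb{T}^2\setminus D_{r_1}$, $dG = A$ is diagonal in the $F^1$--$F^2$ basis with eigenvalues $\lambda^{\pm 1}$, so the cone ratio $|x_1|/|x_2|$ is multiplied by $\lambda^{\pm 2}$ and the two cones in the statement are strictly preserved there for any $\beta \in (0,1)$. The real work is on $D_{r_1}$, where $G = g$ is the time-$1$ map of the slow-down flow; the strategy is to show that $dg$, written in the $(F^1, F^2)$ basis, is close to diagonal with off-diagonal ``twist'' controlled by $\alpha$, so that the cones with $\beta$ slightly larger than this twist are preserved.

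First I would write the vector field in $(F^1,F^2)$ coordinates as $X(s_1,s_2) = \psi(r^2)\log\lambda\,(s_1,-s_2)$ with $r^2 = s_1^2+s_2^2$ and compute
\[
dX = \log\lambda \begin{pmatrix} \psi(r^2) + 2 s_1^2 \psi'(r^2) & 2 s_1 s_2 \psi'(r^2) \\ -2 s_1 s_2 \psi'(r^2) & -\psi(r^2) - 2 s_2^2 \psi'(r^2) \end{pmatrix}.
\]
Using the explicit form $\psi(u) = (u/r_0)^\alpha$ on $[0,r_0/2]$ one gets the identity $u\psi'(u) = \alpha\psi(u)$, and on the transition annulus $[r_0/2, r_0]$ properties (3) and (4) of $\psi$ still yield $u\psi'(u)/\psi(u) = O(\alpha)$. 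Since $|2 s_i s_j|\leq r^2$, this bounds every entry of the ``extra'' matrix $dX - \log\lambda\,\psi(r^2)\,\mathrm{diag}(1,-1)$ by $O(\alpha)\psi(r^2)\log\lambda$ in absolute value, i.e.\ at most $O(\alpha)$ times the diagonal hyperbolic scale.

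Integrating the variational equation $\dot J_t = dX(\Psi_t(p))J_t$, $J_0 = I$, over $t\in[0,1]$ (with $\Psi_t$ the flow of $X$) gives $dg(p) = J_1$. A Gr\"onwall-type argument then shows that, in the $(F^1,F^2)$ basis, $J_1$ has the form $\mathrm{diag}(e^H, e^{-H})$ plus off-diagonal perturbations of size $O(\alpha)(e^H - e^{-H})$, where $H := \log\lambda \int_0^1 \psi(r^2(t))\,dt \geq 0$. A direct linear-algebra check then shows that such a matrix sends $C_\beta(F^2,F^1)$ into itself and its inverse sends $C_\beta(F^1,F^2)$ into itself, provided $\beta \geq C\alpha$ for a constant $C$ depending only on $\lambda$. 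Taking $\beta = C\alpha$ gives uniform cone preservation on $D_{r_1}$, patches with the linear piece to prove both claims of the lemma, and makes $\beta \to 0$ as $\alpha \to 0$ immediate.

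The main obstacle is uniformity along orbits that pass very near the origin, where $\psi(r^2)$ and hence $H$ can be arbitrarily small: on such stretches the hyperbolic opening is tiny, so even a fixed amount of rotation would spoil cone preservation. What saves the argument is precisely the identity $u\psi'(u) = \alpha\psi(u)$, which ties the twist directly to the hyperbolic rate: both entries scale with $\psi(r^2)$, so the twist-to-hyperbolicity ratio is $O(\alpha)$ uniformly on $D_{r_1}$, not merely pointwise. This is exactly what lets the same $\beta = O(\alpha)$ work at every base point and for every orbit, independently of how long it lingers near the origin.
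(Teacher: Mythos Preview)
Your approach is correct in outline and rests on the same key mechanism as the paper: the identity $u\psi'(u)=\alpha\psi(u)$ (and its variant $\psi'/\psi\le 2\alpha/u$ on the transition annulus) forces the off-diagonal twist in $dX$ to be $O(\alpha)$ relative to the diagonal hyperbolic rate, uniformly in the base point. But the paper takes a more direct route that avoids your integration step entirely.

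Rather than integrating the $2\times 2$ variational equation over $[0,1]$ to produce $dg=J_1$ and then doing linear algebra on that matrix, the paper projectivizes immediately: setting $\eta=\xi_2/\xi_1$, it derives the scalar ODE
\[
\frac{d\eta}{dt}=-2\log\lambda\Bigl(\eta\bigl(\psi+r^2\psi'\bigr)+(\eta^2+1)\,s_1s_2\,\psi'\Bigr)
\]
and checks, by a sign analysis of this quadratic in $\eta$, that the interval $|\eta|\le\beta$ is forward-invariant under the flow for the explicit value $\beta=\dfrac{2\alpha}{2\alpha+1+\sqrt{4\alpha+1}}$. Cone invariance for the time-$1$ map follows at once. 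This is a pointwise-in-time calculation; no Gr\"onwall estimate or variation-of-parameters is needed.

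Your route would go through if carried out carefully, but the step you label ``a Gr\"onwall-type argument'' is doing nontrivial work that you have only sketched: getting the off-diagonal entries of $J_1$ to be $O(\alpha)(e^H-e^{-H})$ requires a genuine variation-of-parameters computation along the orbit, and the diagonal entries of $dX$ themselves carry $O(\alpha)$ multiplicative corrections (from the $2s_i^2\psi'$ terms) that must be tracked simultaneously. The slope-ODE approach sidesteps all of this bookkeeping and yields an explicit $\beta(\alpha)$ with $\beta\to 0$ as $\alpha\to 0$ by inspection.
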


\begin{proof}
In \cite{Kat79} Katok proves the case where $\beta=1$. We follow the first step of the proof and then refine the result.

The differential system that generates the flow is
$$\frac{ds_1}{dt}=s_1\psi(s_1^2+s_2^2)\log\lambda , \quad \frac{ds_2}{dt}=-s_2\psi(s_1^2+s_2^2)\log\lambda.$$
As in Proposition 4.1 of \cite{Kat79}, consider the variation equation, which is the linear part of the above system, we get
$$\frac{d\xi_1}{dt}=\log\lambda(\xi_1(2s_1^2\psi'(s_1^2+s_2^2)+\psi(s_1^2+s_2^2))+2s_1s_2\xi_2\psi'(s_1^2+s_2^2)),$$
$$\frac{d\xi_2}{dt}=-\log\lambda(\xi_1s_1s_2\psi'(s_1^2+s_2^2)+\xi_2(2s_2^2\psi'(s_1^2+s_2^2)+\psi(s_1^2+s_2^2))).$$
By defining $\eta:=\frac{\xi_2}{\xi_1}$, we have
\begin{equation}
    \begin{aligned}
    \frac{d\eta}{dt}=-2\log\lambda(\eta(\psi(s_1^2+s_2^2)+(s_1^2+s_2^2)\psi'(s_1^2+s_2^2))+(\eta^2+1)s_1s_2\psi'(s_1^2+s_2^2)).
    \end{aligned}
\end{equation}

At first glance we should consider two cases where $s_1^2+s_2^2\leq \frac{r_0}{2}$ and $\frac{r_0}{2}<s_1^2+s_2^2 \leq r_0$. 
When $s_1^2+s_2^2\leq \frac{r_0}{2}$, we know what $\psi$ exactly is: Recall that $\psi(x)=(\frac{x}{r_0})^\alpha$. Then we have
$(s_1^2+s_2^2)\psi'(s_1^2+s_2^2)=\alpha\psi(s_1^2+s_2^2)$ for $0\leq s_1^2+s_2^2 \leq \frac{r_0}{2}$.

Otherwise, when $\frac{r_0}{2}\leq s_1^2+s_2^2 \leq r_0$, instead of an explicit equation between $\psi$ and $\psi'$, we have:
$$\frac{\psi'(s_1^2+s_2^2)}{\psi(s_1^2+s_2^2)}\leq \frac{\psi'(\frac{r_0}{2})}{\psi(\frac{r_0}{2})}=\frac{2\alpha}{r_0}\leq \frac{2\alpha}{s_1^2+s_2^2}.$$

It is then not hard to see that $\frac{\psi'(x)}{\psi(x)}\leq \frac{2\alpha}{x}$ for all $0<x\leq r_0$. Plugging this into (3.2) we have the following inequality: 
\begin{equation}
    \begin{aligned}
\frac{d\eta}{dt}\geq -2\log\lambda(\psi'(s_1^2+s_2^2)((s_1^2+s_2^2)(1+\frac{1}{2\alpha})\eta+s_1s_2(1+\eta^2))).
    \end{aligned}
\end{equation}

The case where $s_1s_2=0$ is easy to analyze using (3.1), as $\eta$ is decreasing when $\eta>0$ and increasing when $\eta<0$. We only analyze the case where $s_1,s_2>0$ because of symmetry. Observe from (3.2) that $\frac{d\eta}{dt}<0$ when $\eta \geq 0$, thus we only need to focus on $\eta<0$. By defining $k:=\frac{s_1s_2}{s_1^2+s_2^2}$ and doing some elementary calculation, We conclude that $\frac{d\eta}{dt}\geq 0$ when $\eta\in [\frac{-(2\alpha+1)-\sqrt{(2\alpha+1)^2-16k^2\alpha^2}}{4k\alpha},\frac{-(2\alpha+1)+\sqrt{(2\alpha+1)^2-16k^2\alpha^2}}{4k\alpha}]$. As $0<k\leq \frac{1}{2}$, the range of the slope of the invariant cone under all possible k values will be $\bigcap_{k\in (0,\frac{1}{2}]}[\frac{(2\alpha+1)-\sqrt{(2\alpha+1)^2-16k^2\alpha^2}}{4k\alpha},\frac{(2\alpha+1)+\sqrt{(2\alpha+1)^2-16k^2\alpha^2}}{4k\alpha}]$. Observe that $\frac{(2\alpha+1)-\sqrt{(2\alpha+1)^2-16k^2\alpha^2}}{4k\alpha}$ is monotonically increasing in $k$, so by plugging in $k=\frac{1}{2}$, we obtain an invariant cone with slope $\beta:=\frac{2\alpha}{2\alpha+1+\sqrt{4\alpha+1}}$.   
\end{proof}

Besides the above cone argument, we also need the following lemma on global structure on Euclidean space.

\begin{lemma}
Given $\beta\in (0,1)$ and $F^1,F^2\subset \mathbb{R}^d$ being orthogonal linear subspaces such that $F^1\cap F^2=\{0\}$, let $W^1,W^2$ be any foliations of $F^1\oplus F^2$ with $C^1$ leaves such that $T_x{W^1(x)}\subset C_{\beta}(F^1,F^2)$ and $T_x{W^2(x)}\subset C_{\beta}(F^2,F^1)$. Then for every $x,y\in F^1\oplus F^2$, $W^1(x)\cap W^2(y)$ consists of a single point. Moreover, 
\begin{equation}
\begin{aligned}
\max\{d_{W^1}(x,z),d_{W^2}(y,z)\}\leq \frac{1+\beta}{1-\beta}d(x,y).
\end{aligned}
\end{equation}

\end{lemma}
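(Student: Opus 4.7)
The plan is to realize each leaf as a globally defined Lipschitz graph, apply the Banach contraction principle to pin down the intersection point, and then convert the resulting coordinate estimates into leaf-intrinsic distances.

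First I would show that each leaf of $W^1$ is the graph of a globally defined $\beta$-Lipschitz map $\phi_x\colon F^1\to F^2$, and each leaf of $W^2$ is the graph of a $\beta$-Lipschitz map $\psi_y\colon F^2\to F^1$ (up to swapping the roles of $F^1$ and $F^2$ according to the cone convention in force). The cone condition $T_xW^1(x)\subset C_\beta(F^1,F^2)$ makes the Euclidean projection from a leaf onto the base factor a local diffeomorphism whose derivative has uniformly bounded inverse; combined with the simple connectivity of $F^1$ and the completeness of the leaves in $F^1\oplus F^2$, this upgrades to a global diffeomorphism exhibiting the leaf as a graph, and the Lipschitz constant $\beta$ is read off directly from the cone width.

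With these graphs in hand, write $W^1(x)=\{(a,\phi_x(a)):a\in F^1\}$ and $W^2(y)=\{(\psi_y(b),b):b\in F^2\}$. An intersection point $z=(a,b)$ must satisfy $b=\phi_x(a)$ and $a=\psi_y(b)$, hence $a=(\psi_y\circ\phi_x)(a)$. Since $\psi_y\circ\phi_x$ is $\beta^2$-Lipschitz on the complete metric space $F^1$ with $\beta^2<1$, Banach's fixed point theorem produces a unique fixed point, giving the unique intersection $z$.

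For the distance bound, let $x=(x_1,x_2)$, $y=(y_1,y_2)$, $z=(a,b)$. Using $\phi_x(x_1)=x_2$, $\psi_y(y_2)=y_1$ and the Lipschitz bounds gives $|a-y_1|\leq\beta|b-y_2|$ and $|b-x_2|\leq\beta|a-x_1|$. Chaining these with the triangle inequality yields
\[
(1-\beta^2)|a-x_1|\leq |x_1-y_1|+\beta|x_2-y_2|\leq (1+\beta)d(x,y),
\]
so $|a-x_1|\leq d(x,y)/(1-\beta)$. Since $W^1(x)$ is a $\beta$-Lipschitz graph, the leaf-intrinsic distance satisfies $d_{W^1}(x,z)\leq \sqrt{1+\beta^2}\,|a-x_1|\leq \frac{1+\beta}{1-\beta}\,d(x,y)$, using $\sqrt{1+\beta^2}\leq 1+\beta$; the bound for $d_{W^2}(y,z)$ follows by the symmetric chain. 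The only real technical nuisance is the initial realization of the leaves as global graphs (i.e.\ global invertibility of the projection onto the base factor); once that is in place, the rest is Banach's theorem plus routine inequality bookkeeping and I do not anticipate any serious obstacle.
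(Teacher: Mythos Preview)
Your proposal is correct. The paper does not actually prove this lemma; it simply remarks that ``the proof is based on the elementary trigonometry and basic cone estimate'' and refers the reader to Lemma~3.6 in \cite{CFT17-1} for a detailed proof of a more general version. Your graph-plus-contraction argument is exactly the standard way to make that remark precise, and your distance bookkeeping recovers the constant $\frac{1+\beta}{1-\beta}$ on the nose.

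One small point worth tightening when you write it up: with the paper's cone convention $C_\beta(F^1,F^2)=\{x_1+x_2:|x_1|/|x_2|\le\beta\}$, a tangent space in $C_\beta(F^1,F^2)$ is close to $F^2$, so the leaves of $W^1$ are graphs over $F^2$ rather than over $F^1$. You already flag this (``up to swapping the roles of $F^1$ and $F^2$''), and nothing in your argument changes beyond relabeling; just make the choice explicit so the reader is not momentarily confused. The global-graph step (local diffeo onto a simply connected base, image open and closed, hence a diffeomorphism) is routine in this Euclidean setting and poses no difficulty.
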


The proof is based on the elementary trigonometry and basic cone estimate. For a detailed proof of a more general version, see Lemma 3.6 in \cite{CFT17-1}.

With the help of Lemma 3.4 and 3.5, we are able to conclude local product structure for $G$ at $500\lambda\epsilon$, provided $\epsilon$, $\alpha$ and $r_0$ are all sufficiently small and $r_0\leq \epsilon$. We remark that the requirement of $\epsilon$, $\alpha$ being small is straightforward from the proof below, while the requirement of $r_0$ being small is needed to have $500\lambda\epsilon$ cover all the scales containing $r_0$ and $r_1$ throughout the paper, so that these scales will also possess local product structure with the same constant. This can also be visualized later in $\mathsection 4$ and $\mathsection 6$ when we choose the range used in the orbit decomposition for the regularity of potential.

We lift $W^s$ and $W^u$ to $\widetilde{W}^{s}$ and $\widetilde{W}^{u}$ in $\mathbb{R}^2$. Choose any $x,y\in \mathbb{T}^2$ such that $d(x,y)<500\lambda\epsilon$. From now on we use $\epsilon':=500\lambda\epsilon$ in this proof. We also use the notation $\gamma=\gamma(\beta):=\frac{1+\beta}{1-\beta}$ throughout the paper. Let $\widetilde{x},\widetilde{y}\in \mathbb{R}^2$ be lifts of $x,y$ such that $\widetilde{d}(\widetilde{x},\widetilde{y})<\epsilon'$. By Lemma 3.4 and Lemma 3.5 we know $\widetilde{W}^s(\widetilde{x})\cap \widetilde{W}^u(\widetilde{y})$ has a unique intersection $\widetilde{z}\in \mathbb{R}^2$. By projecting $\widetilde{z}$ back to $\mathbb{T}^2$ and (3.4), since $\beta$ and $\epsilon$ (thus $\epsilon'$) are chosen small so that the local leaf is not long enough to wrap around the torus, we have $z\in W^s_{\gamma\epsilon'}(x)\cap W^u_{\gamma\epsilon'}(y)$. Here $W^s_{\gamma\epsilon'}(x)$ means the local stable leaf of $x$ with radius $\gamma\epsilon'$.

Now it suffices to show $z$ is the only point in $W^s_{\gamma\epsilon'}(x)\cap W^u_{\gamma\epsilon'}(y)$. Suppose there is an other $z'\in \mathbb{T}^2$ also in $W^s_{\gamma\epsilon'}(x)\cap W^u_{\gamma\epsilon'}(y)$. Let $\gamma_1:[0,1]\rightarrow \mathbb{T}^2$ be any path that first connects $z$ and $z'$ via $W^s_{\gamma\epsilon'}(x)$ and then $z'$ and $z$ via $W^u_{\gamma\epsilon'}(y)$. Lift $\gamma_1$ to $\widetilde{\gamma_1}$ in $\mathbb{R}^2$, we notice $\widetilde{\gamma_1}(0)\neq \widetilde{\gamma_1}(1)$ since otherwise $\widetilde{W}^s(\widetilde{z})\cap \widetilde{W}^u(\widetilde{z})$ will not be unique. Observe $L(\gamma_1)\geq 1$ since $\gamma_1(0)=\gamma_1(1)$ while $\widetilde{\gamma_1}(0)\neq \widetilde{\gamma_1}(1)$. This contracts to the fact that $\epsilon'$ is small enough since the length of $\gamma_1$ is at most $2\gamma\epsilon'$, which is small.
\end{proof}

From now on we will assume $\alpha$ is fixed and so small such that $\beta$ is sufficiently small. This is possible by Lemma 3.4. As a result $\gamma$ will be very close to one and both $\lambda(1-\beta)$ and $\lambda(1+\beta)$ will be very close to $\lambda$, thus greater than one. We also fix $\epsilon$ to be sufficiently small such that Proposition 3.3 holds, as well as make $r_0$ small for the future use (as explained after stating Lemma 3.5). As a final comment, $r_0\leq \epsilon$ and the choice of $\epsilon$ is independent of the size of the gap $P(\varphi)-\varphi(\underaccent{\bar}{0})$.

By Proposition 3.1(1) and the fact that $f_A$ has specification at all scales, we have the following proposition.

\begin{proposition}
G has specification at all scales.
\end{proposition}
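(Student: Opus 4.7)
The plan is to transfer specification from the linear Anosov automorphism $f_A$ to $G$ through the topological conjugacy given in Proposition 3.1(1).

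First I would recall the classical fact (due to Bowen) that every transitive Anosov diffeomorphism on a compact manifold has the specification property at all scales; in particular, the hyperbolic toral automorphism $f_A$ does. So for every $\delta>0$ there exists $\tau(\delta)\in\mathbb{N}$ such that any finite collection of $f_A$-orbit segments can be $\delta$-shadowed with gaps of length at most $\tau(\delta)$.

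Next, by the remark following Proposition 3.1, $G$ is topologically conjugate to $f_A$ via a homeomorphism $H:=\phi^{-1}\circ h$ of the compact space $\mathbb{T}^2$, so both $H$ and $H^{-1}$ are uniformly continuous. Given any $\epsilon>0$ we choose $\delta=\delta(\epsilon)>0$ such that $d(u,v)<\delta$ implies $d(H(u),H(v))<\epsilon$. Take an arbitrary finite family of $G$-orbit segments $\{(x_j,n_j):1\le j\le k\}$ and set $u_j:=H^{-1}(x_j)$. Since $G^m=H\circ f_A^m\circ H^{-1}$ for every $m\in\mathbb{Z}$, these are $f_A$-orbit segments of the same lengths, and Bowen balls correspond: if $d_{n_j}^{f_A}(u,u_j)<\delta$ then $d_{n_j}^{G}(H(u),x_j)<\epsilon$ by the uniform continuity applied along the orbit.

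Applying specification for $f_A$ at scale $\delta$ gives a point $u\in\mathbb{T}^2$ lying in the appropriate intersection of Bowen balls with gaps $\tau(\delta)$; pushing forward by $H$ yields $x:=H(u)$ lying in $\bigcap_{j=1}^{k}G^{-m_{j-1}}B_{n_j}^{G}(x_j,\epsilon)$ with $m_0=0$ and $m_j=m_{j-1}+n_j+\tau(\delta)$. This establishes specification for $G$ at scale $\epsilon$ with transition time $\tau_G(\epsilon):=\tau(\delta(\epsilon))$, and since $\epsilon>0$ was arbitrary the proposition follows.

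There is no real obstacle here: the only subtlety is the loss of scale under the conjugacy, which costs nothing because $H$ is uniformly continuous on the compact torus. This is the reason the proof goes through even though, as emphasized later in the paper, $H$ is neither H\"older nor differentiable — uniform continuity is all that specification at all scales requires.
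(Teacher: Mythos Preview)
Your argument is correct and follows exactly the approach the paper indicates: the paper's proof is a single sentence invoking Proposition 3.1(1) (the conjugacy of $G$ to $f_A$) together with the classical fact that $f_A$ has specification at all scales, and you have simply unpacked the standard uniform-continuity transfer that makes this work.
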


\subsection{Expansivity}

We know from Proposition 3.1(1) that $G$ is expansive. In this section we prove that $G$ is expansive at scale $100\epsilon$. 

Before giving the proof, we first prove a lemma which will be used very often throughout the paper.

\begin{lemma}
If $x,y\in \mathbb{T}^2$ and $y\in B_n(x,100\epsilon)$ for $\epsilon$ as above and $n\geq 1$, then we have a unique $z\in \mathbb{T}^2$ such that $G^i(z)\in W^s_{100\gamma\epsilon}(G^i(x))\cap W^u_{100\gamma\epsilon}(G^i(y))$ for all $0\leq i \leq n-1$.
\end{lemma}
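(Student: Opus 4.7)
The plan is to intersect the local stable and unstable leaves at the base point using Proposition~3.3, then propagate the intersection along the orbit via the $G$-invariance of the foliations, using the global uniqueness coming from Lemmas~3.4 and~3.5. To avoid ambiguity coming from the torus topology, I would work on the universal cover.

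First, pick lifts $\tilde x,\tilde y\in\mathbb{R}^2$ of $x,y$ with $\tilde d(\tilde x,\tilde y)=d(x,y)<100\epsilon$, fix a lift $\bar G:\mathbb{R}^2\to\mathbb{R}^2$ of $G$, and set $\tilde x_i:=\bar G^i(\tilde x)$, $\tilde y_i:=\bar G^i(\tilde y)$. A short induction using $d(G^i(x),G^i(y))<100\epsilon$ together with the smallness of $\epsilon$ (so that $100\epsilon$ stays well below the injectivity radius of $\mathbb{T}^2$) shows that the iterated lifts remain mutually closest, so $\tilde d(\tilde x_i,\tilde y_i)=d(G^i(x),G^i(y))<100\epsilon$ for all $0\le i\le n-1$.

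Next I would produce $z$ and verify the conclusion. By Lemma~3.4, the tangents to $\tilde W^s$ and $\tilde W^u$ lie in complementary $\beta$-cones at every point of $\mathbb{R}^2$, so Lemma~3.5 gives a unique global intersection $\tilde z:=\tilde W^s(\tilde x)\cap\tilde W^u(\tilde y)$, and (3.4) bounds the stable- and unstable-leaf distances from $\tilde z$ to $\tilde x,\tilde y$ respectively by $\gamma\cdot 100\epsilon=100\gamma\epsilon$. Since $\bar G$ preserves stable and unstable foliations, $\bar G^i(\tilde z)\in\tilde W^s(\tilde x_i)\cap\tilde W^u(\tilde y_i)$, which is again a single point by Lemma~3.5, and (3.4) applied at the $i$-th iterate (using $\tilde d(\tilde x_i,\tilde y_i)<100\epsilon$ from the previous paragraph) bounds its leaf distances by $100\gamma\epsilon$. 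Projecting $\tilde z$ down to $z\in\mathbb{T}^2$ then yields $G^i(z)\in W^s_{100\gamma\epsilon}(G^i(x))\cap W^u_{100\gamma\epsilon}(G^i(y))$ for every $0\le i\le n-1$. For uniqueness of $z$, only the $i=0$ case is needed: any candidate $z'$ lies in $W^s_{100\gamma\epsilon}(x)\cap W^u_{100\gamma\epsilon}(y)$, and since $100\gamma\epsilon$ is well within the $500\lambda\gamma\epsilon$ window of Proposition~3.3's local product structure, that intersection is a singleton, forcing $z'=z$.

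The one delicate point is the lift-bookkeeping step: guaranteeing that $\bar G^i(\tilde y)$ remains the nearest lift of $G^i(y)$ to $\bar G^i(\tilde x)$ on the cover. This reduces to ensuring that the Lipschitz constant of $\bar G$ times $100\epsilon$ stays well below $1$, which is safely arranged by the standing smallness assumption on $\epsilon$, $\alpha$, $r_0$ fixed after Lemma~3.5. Once this is in place, the rest is immediate from leaf invariance and the global cone-based uniqueness of Lemmas~3.4 and~3.5.
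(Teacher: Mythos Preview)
Your argument is correct and uses the same ingredients as the paper, but you package them a little differently. The paper stays on $\mathbb{T}^2$ throughout: for each $0\le i\le n-1$ it invokes Proposition~3.3 directly to produce the unique $z_i\in W^s_{100\gamma\epsilon}(G^i(x))\cap W^u_{100\gamma\epsilon}(G^i(y))$, and then checks that these points form a single orbit by observing that $G(z_i)$ lands in $W^s_{100\lambda(1+\beta)\gamma\epsilon}(G^{i+1}(x))\cap W^u_{100\lambda(1+\beta)\gamma\epsilon}(G^{i+1}(y))$ and appealing to the uniqueness in the local product structure at that slightly larger scale (still safely below $500\lambda\epsilon$). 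You instead lift once to $\mathbb{R}^2$, use the global intersection from Lemma~3.5 to get a single $\tilde z$, and read off the leaf-distance bounds at every iterate from (3.4) and foliation invariance. Your route trades the paper's two-scale uniqueness step for the lift-bookkeeping you flag at the end; both are routine, and the underlying mechanism (cone-controlled foliations plus invariance) is identical.
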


\begin{proof}
Recall $\epsilon$ and $\beta$ are chosen small so that we have local product structure at $500\lambda\epsilon$. Fix any $x\in \mathbb{T}^2$ and $y\in B_n(x,100\epsilon)$. Since $d(G^i(x),G^i(y))\leq 100\epsilon$ for any $0\leq i \leq n-1$, by Proposition 3.3 and Lemma 3.5 we have $z_i\in \mathbb{T}^2$ such that $z_i=W^s_{100\gamma\epsilon}(G^i(x))\cap W^u_{100\gamma\epsilon}(G^i(y))$ for any $0\leq i \leq n-1$. Since $G(z_i)=W^s_{100\lambda(1+\beta)\gamma\epsilon}(G^{i+1}(x)) \cap W^u_{100\lambda(1+\beta)\gamma\epsilon}(G^{i+1}(x))$, by applying local product structure at scale $100\lambda(1+\beta)\gamma\epsilon$, we observe that $G(z_i)=z_{i+1}$, thus $G^i(z_0)=z_i$. It follows that $z_0$ is our desired $z$. 
\end{proof}

\begin{proposition}
$G$ is expansive at scale $100\epsilon$. In particular, $P_{\exp}^{\perp}(\varphi,100\epsilon)<P(\varphi)$.
\end{proposition}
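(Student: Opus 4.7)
The cleanest route is to transfer expansivity from the linear Anosov $f_A$ via the topological conjugacy, using only uniform continuity on the compact torus; the ``in particular'' clause is then automatic.

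By Proposition 3.1(1) (and the subsequent remark applied to $G$ in place of $\widetilde{G}$), there is a homeomorphism $\tilde h\colon \mathbb{T}^2\to\mathbb{T}^2$ with $G=\tilde h\circ f_A\circ \tilde h^{-1}$. The linear Anosov $f_A$ is expansive at some constant $\epsilon_A>0$. Since $\tilde h^{-1}$ is continuous on the compact torus, it is uniformly continuous: there exists $\delta>0$ such that $d(u,v)\le\delta$ implies $d(\tilde h^{-1}(u),\tilde h^{-1}(v))<\epsilon_A$. The parameter $\epsilon$ has already been fixed small enough for the local product structure (Proposition 3.3) and for $\beta$ to be close to $0$; we further shrink $\epsilon$ if necessary so that $100\epsilon\le\delta$. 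All these are monotone smallness conditions and are jointly satisfiable.

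Now let $y\in\Gamma_{100\epsilon}(x)$, i.e.\ $d(G^n(x),G^n(y))\le 100\epsilon$ for every $n\in\mathbb{Z}$. Set $p=\tilde h^{-1}(x)$ and $q=\tilde h^{-1}(y)$, so that $f_A^n(p)=\tilde h^{-1}(G^n(x))$ and analogously for $q$. By the choice of $\delta$, $d(f_A^n(p),f_A^n(q))<\epsilon_A$ for every $n\in\mathbb{Z}$, and expansivity of $f_A$ forces $p=q$, hence $x=y$. Thus $\mathrm{NE}(100\epsilon)=\emptyset$, and the supremum defining $P_{\exp}^{\perp}(\varphi,100\epsilon)$ is taken over an empty family; we conclude $P_{\exp}^{\perp}(\varphi,100\epsilon)=-\infty<P(\varphi)$.

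The only real subtlety is bookkeeping: one must confirm that shrinking $\epsilon$ to secure $100\epsilon\le\delta$ is compatible with the previous requirements (small $\beta$, local product structure at $500\lambda\epsilon$, $r_0\le\epsilon$, etc.). Since every constraint is a smallness condition, no genuine conflict arises. A more intrinsic alternative would iterate Lemma 3.7 in both directions of time, using its uniqueness clause, to obtain a $z\in W^s_{100\gamma\epsilon}(x)\cap W^u_{100\gamma\epsilon}(y)$ whose orbit stays on the local leaves of $G^n(x)$ and $G^n(y)$ for every $n\in\mathbb{Z}$, and then to force $z=x=y$ from the expanding behavior of unstable leaves away from the origin. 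That argument is doable but needs extra care at the neutral fixed point, which is why the conjugacy route is preferable.
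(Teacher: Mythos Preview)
Your argument is correct and takes a genuinely different route from the paper. The paper works intrinsically at the already-fixed scale: given $y\in\Gamma_{100\epsilon}(x)$, it applies Lemma~3.7 to produce $z$ with $G^i(z)\in W^s_{100\gamma\epsilon}(G^i(x))\cap W^u_{100\gamma\epsilon}(G^i(y))$ for all $i\ge 0$, shows the unstable distance $d_u(G^i(y),G^i(z))$ stays uniformly bounded for all forward times, and then uses forward expansion along $W^u$ to force $z=y$ (and, symmetrically in backward time, $x=z$). Your conjugacy transfer is shorter and, as you correctly anticipate, sidesteps any delicacy at the neutral fixed point; the paper's route has the complementary advantage of never leaving the fixed scale and never invoking the conjugacy, in keeping with the rest of the argument.

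One refinement on your bookkeeping paragraph: the sentence ``every constraint is a smallness condition, no genuine conflict'' slightly undersells the dependence. The modulus $\delta$ comes from $\tilde h^{-1}$, and $\tilde h$ depends on $G$, hence on $r_0$; since the standing hypothesis is $r_0\le\epsilon$, shrinking $\epsilon$ may force $r_0$ down and thereby move $\delta$. The honest resolution is that as $r_0\to 0$ one has $G\to f_A$ in $C^0$ and the structural-stability conjugacy can be taken $C^0$-close to the identity, so $\delta(r_0)$ stays bounded below (near $\epsilon_A$); thus for small $r_0$ one has $100r_0<\delta(r_0)$ and any $\epsilon\in[r_0,\delta(r_0)/100]$ works. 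Nothing is actually wrong---just make the order of quantifiers explicit rather than appealing to monotonicity.
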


\begin{proof}
Suppose there exists $x,y\in \mathbb{T}^2$ such that $d(G^k(x),G^k(y))<100\epsilon$ for any $k\in \mathbb{Z}$. By Lemma 3.7 to $B_n(x,100\epsilon)$ with each $n>0$, we have a $z\in \mathbb{T}^2$ such that $G^i(z)=W^s_{100\gamma\epsilon}(G^i(x))\cap W^u_{100\gamma\epsilon}(G^i(y))$ for all $i>0$.

For $i>0$, as $G^i(z)\in W^s_{100\gamma\epsilon}(G^i(x))$, we have $d(G^i(x),G^i(z))\leq 100\gamma\epsilon$. Therefore $d(G^i(y),G^i(z))\leq 100(1+\gamma)\epsilon$ for all $i>0$. From Lemma 3.7 in \cite{CFT17-1}, as $G^i(y)$ and $G^i(z)$ are always in the same local leaf of $W^u$, $d_u(G^i(y),G^i(z))\leq \gamma d(G^i(y),G^i(z)) \leq 100(1+\gamma)\gamma\epsilon$ for all $i>0$, which contradicts $z\in W^u_{100\gamma\epsilon}(y)$.
\end{proof}

\section{Construction of the decomposition}

Since the specification property holds globally for all the orbit segments at all scales, it suffices to choose  $\mathscr{G}$ in a way such that desired potentials have Bowen property. Meanwhile, $\mathscr{G}$ should be large enough so that pressure supported on $\mathscr{P} \cup \mathscr{S}$ is small. Consider the following set of orbit segments:
$$\mathscr{G}(r)=\{(x,n):\frac{1}{i}S_i\chi(x)\geq r \text{ and } \frac{1}{i}S_i\chi(G^{n-i}(x))\geq r \text{ for all }0\leq i \leq n \}$$
where $\chi$ is the characteristic function for $\mathbb{T}^2\setminus D_{100\gamma\epsilon+r_1}$ and $r$ is defined on $(0,1]$. In practice, we only consider the case where $r$ is small. The choice of constants in $\chi$ is to make sure that orbit segments that start and end far away from origin and spend enough time outside the perturbed area would show high regularity for the chosen family of potential function.

Respectively we choose 
$$\mathscr{P}(r)=\mathscr{S}(r)=\{(x,n)\in \mathbb{T}^2 \times \mathbb{N}: \frac{1}{n}S_n\chi(x)<r\}$$

The case where $n=0$ shall not cause ambiguity as we have $\mathbb{T}^2\times \{0\}$ to be contained in all of three collections. We will see later in $\mathsection 5$ and $\mathsection 6$ that the appropriate choice of $r$ will make Theorem 2.5 applicable to $(\mathscr{P}(r), \mathscr{G}(r), \mathscr{S}(r))$. Before moving forward to the verification of those properties, we must prove they actually form an orbit decomposition.

\begin{proposition}
For every $0<r \leq 1$, the collections $(\mathscr{P}(r), \mathscr{G}(r), \mathscr{S}(r))$ form an orbit decomposition for $G$.
\end{proposition}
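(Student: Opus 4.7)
The goal is purely combinatorial bookkeeping: produce, for every orbit segment $(x,n)$, a canonical splitting $n=p+g+s$ with the required membership. The plan is to define $p$ and $s$ as the \emph{largest} prefix and suffix lengths that can serve as $\mathscr{P}(r)$- and $\mathscr{S}(r)$-pieces, and then use maximality to force the middle segment into $\mathscr{G}(r)$.

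Concretely, given $(x,n)\in\mathbb{T}^2\times\mathbb{N}$, I will set
\[
p(x,n)=\max\Bigl\{\,k\in\{0,1,\dots,n\}\;:\;\tfrac{1}{k}S_k\chi(x)<r\,\Bigr\},
\]
where $k=0$ is always admissible by the convention that $(x,0)$ lies in every collection, and then
\[
s(x,n)=\max\Bigl\{\,k\in\{0,1,\dots,n-p\}\;:\;\tfrac{1}{k}S_k\chi\bigl(G^{n-k}(x)\bigr)<r\,\Bigr\}.
\]
Finally set $g=n-p-s\ge0$. By construction $(x,p)\in\mathscr{P}(r)$ and $(G^{p+g}(x),s)=(G^{n-s}(x),s)\in\mathscr{S}(r)$, and $n=p+g+s$, so the only thing left to verify is that $(G^p(x),g)\in\mathscr{G}(r)$.

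For the first defining inequality of $\mathscr{G}(r)$, suppose toward contradiction that $\tfrac{1}{i}S_i\chi(G^p(x))<r$ for some $1\le i\le g$. Using additivity of Birkhoff sums,
\[
S_{p+i}\chi(x)=S_p\chi(x)+S_i\chi\bigl(G^p(x)\bigr)<rp+ri=r(p+i),
\]
so $(x,p+i)\in\mathscr{P}(r)$; but $p<p+i\le p+g\le n$, contradicting the maximality of $p$. For the second inequality, suppose $\tfrac{1}{i}S_i\chi(G^{p+g-i}(x))<r$ for some $1\le i\le g$. Since $p+g-i=n-s-i$, additivity again gives
\[
S_{i+s}\chi\bigl(G^{n-(i+s)}(x)\bigr)=S_i\chi\bigl(G^{n-(i+s)}(x)\bigr)+S_s\chi\bigl(G^{n-s}(x)\bigr)<r(i+s),
\]
so $(G^{n-(i+s)}(x),i+s)\in\mathscr{S}(r)$, and $i+s\le g+s=n-p$, contradicting the maximality of $s$. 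Hence both conditions hold and $(G^p(x),g)\in\mathscr{G}(r)$.

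There is really no hard step: the argument is self-contained given the definitions, and the only mild subtlety is the edge cases. One has to be careful that the maxima are taken over finite nonempty sets (guaranteed by the inclusion of $k=0$), that the constraint $s\le n-p$ is imposed when defining $s$ so that $g\ge0$, and that the strict inequality $p+i\le n$ (resp.\ $i+s\le n-p$) used to reach a contradiction is indeed satisfied thanks to $i\le g$. With these checks in place the proposition follows.
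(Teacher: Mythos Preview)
Your proof is correct and follows essentially the same approach as the paper: take $p$ and $s$ to be the maximal prefix and suffix lengths landing in $\mathscr{P}(r)$ and $\mathscr{S}(r)$, then use maximality together with additivity of Birkhoff sums to force the middle piece into $\mathscr{G}(r)$. Your write-up is in fact more explicit than the paper's, which simply asserts the key inequalities ``by the definition of $i$ and $k$'' without spelling out the contradiction argument.
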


\begin{proof}
For $(x,n)\in \mathbb{T}^2\times \mathbb{N}$, consider the largest integer $0\leq i \leq n$ such that $S_i\chi(x)<ir$ and the largest integer $0\leq k \leq n-i$ such that $S_k\chi(G^{n-k}(x))<kr$. If $S_j\chi(x)\geq jr$ for all $0\leq j \leq n$, we take $i=0$ (the case for $k$ is similar). By the definition of $i$ and $k$ we have $\frac{1}{l}S_l\chi(G^i(x))\geq r$ for $0\leq l \leq n-i$ and $\frac{1}{m}S_m\chi(G^{n-k-m}(x))\geq r$ for $0\leq m \leq n-k$. Therefore, we have
$$(x,i)\in \mathscr{P}(r), \quad (G^ix,n-i-k)\in \mathscr{G}(r), \quad (G^{n-k}x,k)\in \mathscr{S}(r)$$
which concludes the proof.
\end{proof}

\section{Pressure Gap}

We want to prove that given $\varphi(\underaccent{\bar}{0})<P(\varphi)$, we can find $r'>0$ sufficiently small so that $P(\mathscr{P}(r'),\varphi,\epsilon,100\epsilon)<P(\varphi)$. We first show that there is an $r'$ that $P(\mathscr{P}(r'),\varphi)<P(\varphi)$. Then we get $P(\mathscr{P}(r'),\varphi,\epsilon)<P(\varphi)$ automatically, as $P(\mathscr{P}(r'),\varphi,\epsilon)<P(\mathscr{P}(r),\varphi)$. Finally we show $P(\mathscr{P}(r'),\varphi,\epsilon)=P(\mathscr{P}(r'),\varphi,\epsilon,100\epsilon)$ in our case. This yields the third condition in Theorem 2.5, with $\mathscr{P}$ being chosen as $\mathscr{P}(r')$.

\subsection{General estimates}

We start with a general estimate for pressure on set of orbit segments. Under the same setting and given $\mathscr{D}\subset X\times \mathbb{N}$, for $(x,n)\in \mathscr{D}$, we define the empirical measure $\delta_{x,n}$ by
$$\delta_{x,n}:=\frac{1}{n}\sum_{i=0}^{n-1}\delta_{G^i(x)}.$$
For each $n\in \mathbb{N}$ we consider the following convex hull of $\delta_{x,n}$ for $(x,n)\in \mathscr{D}$:
$$\mathscr{M}_n(\mathscr{D}):=\left\{\sum_{i=1}^{k}a_i\delta_{x_i,n} : a_i\geq 0, \sum{a_i}=1,x_i\in \mathscr{D}_n\right\}.$$

Denote the weak* limit points of $\mathscr{M}_n(\mathscr{D})$ when $n\rightarrow \infty$ by $\mathscr{M}^{*}(\mathscr{D})$, we observe that $\mathscr{M}^{*}(\mathscr{D})$ is non-empty when $P(\mathscr{D},\varphi)>-\infty$ and $\mathscr{M}^{*}(\mathscr{D})\subset \mathscr{M}(X)$. 

Following the standard proof of variational principle for pressure in \cite{Wal82} (or see Proposition 5.1 in \cite{BCFT18}), we get

\begin{proposition}
$P(\mathscr{D},\varphi)\leq \sup_{\mu\in \mathscr{M}^{*}(\mathscr{D})}P_{\mu}(\varphi)$.
\end{proposition}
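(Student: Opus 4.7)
The plan is to adapt the classical Misiurewicz proof of the variational principle so that the invariant measure produced by the argument sits automatically in $\mathscr{M}^{*}(\mathscr{D})$. Fix a scale $\delta>0$. For each $n\in\mathbb{N}$, pick a $(\delta,n)$-separated set $E_n\subset \mathscr{D}_n$ with
$$\Lambda_n:=\sum_{x\in E_n}e^{S_n\varphi(x)}\geq \tfrac{1}{2}\Lambda_n^{sep}(\mathscr{D},\varphi,\delta;G),$$
set $\sigma_n:=\Lambda_n^{-1}\sum_{x\in E_n}e^{S_n\varphi(x)}\delta_x$, and average under $G$ to obtain
$$\mu_n:=\frac{1}{n}\sum_{i=0}^{n-1}G^i_{*}\sigma_n=\sum_{x\in E_n}\frac{e^{S_n\varphi(x)}}{\Lambda_n}\,\delta_{x,n}.$$
The second expression exhibits $\mu_n$ as a convex combination of empirical measures $\delta_{x,n}$ with $x\in \mathscr{D}_n$, so $\mu_n\in \mathscr{M}_n(\mathscr{D})$. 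Extracting a subsequence $n_k\to\infty$ along which $\tfrac{1}{n_k}\log\Lambda_{n_k}$ converges to $\limsup_n\tfrac{1}{n}\log\Lambda_n^{sep}(\mathscr{D},\varphi,\delta;G)$ and $\mu_{n_k}\xrightarrow{w^{*}}\mu$, the limit $\mu$ lies in $\mathscr{M}^{*}(\mathscr{D})$ by the very definition of that set, while the standard telescoping bound $\|\mu_n-G_{*}\mu_n\|\le 2/n$ shows $\mu$ is $G$-invariant.

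Next I would run the entropy estimate against $\sigma_n$. Pick a finite Borel partition $\xi$ of $\mathbb{T}^{2}$ with each atom of diameter strictly less than $\delta$ and with $\mu(\partial\xi)=0$; such a $\xi$ exists because $\mu$ is regular and has already been fixed. Each element of $\xi_n:=\bigvee_{i=0}^{n-1}G^{-i}\xi$ then has $d_n$-diameter below $\delta$, hence contains at most one point of $E_n$, and a direct calculation yields
$$H_{\sigma_n}(\xi_n)+\int S_n\varphi\,d\sigma_n=\log\Lambda_n.$$
For any fixed $q\ge 1$, splitting $\{0,\ldots,n-1\}$ into blocks of length $q$ starting at each $j\in\{0,\ldots,q-1\}$ and averaging in $j$, as in Walters (Thm.\ 9.10), gives
$$\frac{1}{n}H_{\sigma_n}(\xi_n)\leq \frac{1}{q}H_{\mu_n}(\xi_q)+\frac{2q\log\#\xi}{n}.$$
Since $\mu(\partial\xi_q)=0$, weak* convergence on the fixed finite partition $\xi_q$ gives $H_{\mu_{n_k}}(\xi_q)\to H_\mu(\xi_q)$, while $\int\varphi\,d\mu_{n_k}\to \int\varphi\,d\mu$. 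Using $\tfrac{1}{n}\int S_n\varphi\,d\sigma_n=\int\varphi\,d\mu_n$, sending $n_k\to\infty$ and then $q\to\infty$ produces
$$\limsup_{n\to\infty}\tfrac{1}{n}\log\Lambda_n^{sep}(\mathscr{D},\varphi,\delta;G)\leq h_\mu(G)+\int\varphi\,d\mu\leq \sup_{\nu\in \mathscr{M}^{*}(\mathscr{D})}P_\nu(\varphi).$$
Since the right-hand side is independent of $\delta$, letting $\delta\to 0$ delivers $P(\mathscr{D},\varphi;G)\leq \sup_{\nu\in \mathscr{M}^{*}(\mathscr{D})}P_\nu(\varphi)$.

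The only substantive deviation from the classical proof is the observation in the first paragraph that $\mu_n$ is manifestly a convex combination of the permitted empirical measures $\delta_{x,n}$, so membership of the weak* limit in $\mathscr{M}^{*}(\mathscr{D})$ is automatic; the rest of the argument is identical bookkeeping. The one minor technicality is producing a small-diameter partition $\xi$ with $\mu$-null boundary, which is handled, as usual, by a tiny perturbation of an arbitrary open cover of $\mathbb{T}^{2}$ of diameter below $\delta$ after the limit $\mu$ has been fixed.
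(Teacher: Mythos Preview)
Your proposal is correct and is precisely the argument the paper has in mind: the paper's own proof consists of the single sentence ``Following the standard proof of variational principle for pressure in \cite{Wal82} (or see Proposition 5.1 in \cite{BCFT18}),'' and what you have written is exactly that standard Misiurewicz argument with the one extra observation that the averaged measures $\mu_n$ are convex combinations of empirical measures $\delta_{x,n}$ with $x\in\mathscr{D}_n$, hence lie in $\mathscr{M}_n(\mathscr{D})$, forcing the weak* limit into $\mathscr{M}^{*}(\mathscr{D})$.
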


\subsection{Pressure gap estimate}
We notice that the measures in $\mathscr{M}^{*}(\mathscr{P}(r))$ are the weak* limits of measures in $\mathscr{M}_n(\mathscr{P}(r))$ when $n\rightarrow \infty$. For $\mu_n\in \mathscr{M}_n(\mathscr{P}(r))$, we observe that $\int{\chi}d\mu_n < r$ by definition of $\mathscr{P}(r)$. For each $0<r \leq 1$, write $\mathscr{M}_{\chi}(r)$ to be the set of $G$-invariant Borel probability measures $\mu $ such that $\int{\chi}d\mu\leq r$. Observe that $\mathscr{M}_n(\mathscr{P}(r))\subset \mathscr{M}_{\chi}(r)$ for any $n\in \mathbb{N}$. The following lemma says that this inclusion holds true in the limit case.

\begin{lemma}
 $\mathscr{M}^{*}(\mathscr{P}(r))\subset \mathscr{M}_{\chi}(r).$
\end{lemma}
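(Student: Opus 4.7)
The plan is to reduce the statement to a standard portmanteau-type argument. Fix $\mu\in\mathscr{M}^{*}(\mathscr{P}(r))$; by the definition of $\mathscr{M}^{*}$ there exist $n_k\to\infty$ and $\mu_{n_k}\in\mathscr{M}_{n_k}(\mathscr{P}(r))$ with $\mu_{n_k}\to\mu$ in the weak$^*$ topology. Writing $\mu_{n_k}=\sum_i a_i^{(k)}\,\delta_{x_i^{(k)},n_k}$ with $(x_i^{(k)},n_k)\in\mathscr{P}(r)$, the definition of $\mathscr{P}(r)$ immediately gives $\int\chi\,d\mu_{n_k}=\sum_i a_i^{(k)}\,\tfrac{1}{n_k}S_{n_k}\chi(x_i^{(k)})<r$ for every $k$. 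The $G$-invariance of $\mu$ is already built into the fact $\mathscr{M}^{*}(\mathscr{D})\subset\mathcal{M}(G)$ recorded right after the definition of $\mathscr{M}^{*}$, so the only thing to prove is $\int\chi\,d\mu\le r$.

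The main obstacle is that $\chi$ is not continuous, so weak$^*$ convergence does not yield $\int\chi\,d\mu_{n_k}\to\int\chi\,d\mu$ in general. The key observation to get around this is that with our convention $D_\rho=\{(s_1,s_2):s_1^2+s_2^2\le\rho^2\}$, the set $D_{100\gamma\epsilon+r_1}$ is closed, hence $U:=\mathbb{T}^2\setminus D_{100\gamma\epsilon+r_1}$ is open and $\chi=\mathbf{1}_U$ is lower semicontinuous. Therefore the portmanteau theorem applies in the correct direction:
\[
\int\chi\,d\mu \;=\; \mu(U)\;\le\;\liminf_{k\to\infty}\mu_{n_k}(U)\;=\;\liminf_{k\to\infty}\int\chi\,d\mu_{n_k}\;\le\;r,
\]
which is exactly the inclusion $\mu\in\mathscr{M}_\chi(r)$ that we wanted.

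In short, the whole proof is: unwind the definition of $\mathscr{M}^{*}(\mathscr{P}(r))$ to get a sequence $\mu_{n_k}\to\mu$, record the uniform bound $\int\chi\,d\mu_{n_k}<r$ coming from membership in $\mathscr{P}(r)$, and then use lower semicontinuity of $\chi=\mathbf{1}_U$ (with $U$ the complement of a closed disk) to pass the inequality to the limit. No further dynamical input is required, so the argument is short; the only subtle point is choosing the correct semicontinuity direction so that weak$^*$ convergence produces the desired upper bound on $\int\chi\,d\mu$ rather than a useless lower bound.
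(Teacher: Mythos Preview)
Your proof is correct and coincides with the paper's argument. The paper states Lemma 5.2 as an immediate consequence of the weak*-compactness of $\mathscr{M}_\chi(r)$ (Lemma 5.3), whose proof is exactly your portmanteau step: $\chi$ is lower semicontinuous because $D_{100\gamma\epsilon+r_1}$ is closed, hence $\int\chi\,d\mu\le\liminf_k\int\chi\,d\mu_{n_k}\le r$. You simply apply this directly to a convergent sequence from $\mathscr{M}_{n_k}(\mathscr{P}(r))$ rather than first packaging it as a closedness statement for $\mathscr{M}_\chi(r)$, but the content is identical.
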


In fact, Lemma 5.2 follows easily from the following lemma concerning the weak*-compactness of the set $\mathscr{M}_{\chi}(r)$, for which we will give a proof.

\begin{lemma}
$\mathscr{M}_{\chi}(r)$ is weak*-compact for all $0<r \leq 1$.
\end{lemma}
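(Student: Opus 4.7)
The plan is to reduce this to standard weak$^*$-topology facts: the full set $\mathscr{M}(G)$ of $G$-invariant Borel probability measures on $\mathbb{T}^2$ is weak$^*$-compact, so it suffices to show that $\mathscr{M}_\chi(r)$ is weak$^*$-closed inside $\mathscr{M}(G)$. The only thing that is delicate is that $\chi$ is not continuous, so the functional $\mu \mapsto \int \chi\, d\mu$ is not a priori continuous in the weak$^*$ topology; however the discontinuity is of the ``right'' sign.

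Concretely, first I would observe that, by the convention $D_r = \{s_1^2+s_2^2 \leq r^2\}$ being a closed disk, the set $U := \mathbb{T}^2 \setminus D_{100\gamma\epsilon+r_1}$ is open, and therefore $\chi = \mathbf{1}_U$ is lower semi-continuous. Next, I would invoke the portmanteau theorem in the form: for every open $U$, the functional $\mu \mapsto \mu(U)$ is lower semi-continuous in the weak$^*$ topology, i.e.\ $\liminf_n \mu_n(U) \geq \mu(U)$ whenever $\mu_n \to \mu$ weakly.

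Now take a sequence $\mu_n \in \mathscr{M}_\chi(r)$ converging weak$^*$ to some $\mu$. By general Krylov--Bogolyubov considerations the weak$^*$-limit of $G$-invariant measures is $G$-invariant, so $\mu \in \mathscr{M}(G)$. Applying the portmanteau inequality to $U$ gives
\[
\int \chi\, d\mu = \mu(U) \;\leq\; \liminf_{n\to\infty} \mu_n(U) \;=\; \liminf_{n\to\infty} \int \chi\, d\mu_n \;\leq\; r,
\]
so $\mu \in \mathscr{M}_\chi(r)$. Hence $\mathscr{M}_\chi(r)$ is weak$^*$-closed in the weak$^*$-compact set $\mathscr{M}(G)$, and therefore weak$^*$-compact.

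I do not expect any real obstacle: the only subtlety is that one must use the correct direction of semicontinuity. If $\chi$ had been the indicator of a closed set one would obtain an upper-semicontinuous functional, and the sublevel set $\{\int\chi\,d\mu \leq r\}$ need not be closed; the whole argument hinges on the fact that $D_{100\gamma\epsilon+r_1}$ is closed, so its complement is open, so $\mathbf{1}_U$ is l.s.c., which is exactly compatible with the weak direction of the portmanteau inequality used above.
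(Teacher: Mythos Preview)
Your proof is correct and follows essentially the same approach as the paper: both arguments observe that $\chi$ is lower semi-continuous (since $D_{100\gamma\epsilon+r_1}$ is closed), and then use the resulting inequality $\int \chi\, d\mu \leq \liminf_n \int \chi\, d\mu_n$ for weak$^*$-convergent sequences to conclude closedness. Your write-up is in fact slightly more explicit, since you also note that $G$-invariance passes to the weak$^*$ limit, a point the paper leaves implicit.
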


\begin{proof}
Without loss of generality we assume $\mu$ is the weak* limit of $\{\mu_{n_k}\}_{k\geq 1}$, where $\mu_{n_k}\in \mathscr{M}_{\chi}(r)$. We want to show that $\int{\chi}d\mu \leq r$.
Recall $\chi$ is the characteristic function for $\mathbb{T}^2\setminus D_{100\gamma\epsilon+r_1}$, thus lower-semi continuous, as we define $D_r$ to be the closed balls. Then $\int{\chi d_{\mu}}\leq \liminf_{k\rightarrow \infty}\int{\chi d_{\mu_{n_k}}}\leq r$ by remarks preceding Theorem 6.5 in \cite{Wal82}.
\end{proof}

We first observe that $\mathscr{M}_{\chi}(r)$ is non-decreasing in $r$ and $\mathscr{M}_{\chi}(0)=\bigcap_{r>0}\mathscr{M}_{\chi}(r).$ For $\mu \in \mathscr{M}_{\chi}(0)$, $\mu(\mathbb{T}^2\setminus{D_{100\gamma\epsilon+r_1}})=0$. However, we have $\bigcup_{k=-\infty}^{+\infty}G^{k}(\mathbb{T}^2\setminus{D_{100\gamma\epsilon+r_1}})=\mathbb{T}^2\setminus \{\underaccent{\bar}{0}\}$. By invariance of $\mu$, we conclude that $\mu=\delta_0$, the Dirac measure at origin, thus $\mathscr{M}_{\chi}(0)=\delta_0$, and $P_{\delta_0}(\varphi)=\varphi(\underaccent{\bar}{0})$.

Meanwhile, from Proposition 3.8, we know $G$ is expansive, so the entropy function $\mu\rightarrow h_{\mu}(\varphi)$ is upper semi-continuous, so is the pressure function $\mu\rightarrow P_{\mu}(\varphi)$. Therefore, for any small $\epsilon'>0$, there is an open neighborhood $U$ of $\delta_0$ in the weak* topology of $\mathscr{M}(X)$ such that for any $\mu\in U$, we have $P_{\mu}(\varphi)<P_{\delta_0}(\varphi)+\epsilon'=\varphi(\underaccent{\bar}{0})+\epsilon'$. By Lemma 5.3, there exists some $r'>0$ such that $\mathscr{M}_{\chi}(r')\subset U$. Since $\varphi(\underaccent{\bar}{0})<P(\varphi)$, by taking $0<\epsilon'<P(\varphi)-\varphi(\underaccent{\bar}{0})$, we obtain respective $r'>0$ such that $\sup_{\mu\in \mathscr{M}_{\chi}(r')}P_{\mu}(\varphi)\leq \varphi(\underaccent{\bar}{0})+\epsilon'<P(\varphi)$. This together with Proposition 5.1 and Lemma 5.2 show that $P(\mathscr{P}(r'),\varphi)<P(\varphi)$ for the $r'$ in the proof.

\begin{proposition}
When $\varphi$ is a continuous potential function such that $\varphi(\underaccent{\bar}{0})<P(\varphi)$, there is some small $r'>0$ such that $P(\mathscr{P}(r'),\varphi)<P(\varphi)$.
\end{proposition}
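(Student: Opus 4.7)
The plan is to reduce the pressure bound to a semi-continuity statement about the measure-theoretic pressure near the Dirac mass $\delta_{\underaccent{\bar}{0}}$, exploiting the fact that the defining condition on $\mathscr{P}(r)$ is a convex constraint on an empirical measure. Concretely, I would apply Proposition 5.1 to get $P(\mathscr{P}(r),\varphi)\leq \sup_{\mu\in\mathscr{M}^\ast(\mathscr{P}(r))} P_\mu(\varphi)$, and then use Lemma 5.2 to replace $\mathscr{M}^\ast(\mathscr{P}(r))$ by the (a priori larger) set $\mathscr{M}_\chi(r)$ of $G$-invariant measures with $\int \chi\, d\mu \leq r$.

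Next I would analyze the family $\{\mathscr{M}_\chi(r)\}_{r>0}$: it is nested and decreasing as $r\downarrow 0$, so its intersection is exactly $\mathscr{M}_\chi(0)$, consisting of invariant measures supported on the closed disk $D_{100\gamma\epsilon+r_1}$. Because $G$ is a homeomorphism of $\mathbb{T}^2$ with $\underaccent{\bar}{0}$ as its only fixed point in a neighborhood of $D_{r_1}$ and $\bigcup_{k\in\mathbb{Z}} G^k(\mathbb{T}^2\setminus D_{100\gamma\epsilon+r_1})=\mathbb{T}^2\setminus\{\underaccent{\bar}{0}\}$, $G$-invariance forces any $\mu\in\mathscr{M}_\chi(0)$ to be supported on $\{\underaccent{\bar}{0}\}$, hence $\mathscr{M}_\chi(0)=\{\delta_{\underaccent{\bar}{0}}\}$, and in particular $P_{\delta_{\underaccent{\bar}{0}}}(\varphi)=\varphi(\underaccent{\bar}{0})$.

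Now I would use expansivity of $G$ (Proposition 3.8), which makes $\mu\mapsto h_\mu(G)$ and hence $\mu\mapsto P_\mu(\varphi)$ upper semi-continuous in the weak* topology on $\mathscr{M}(G)$. Fix $\epsilon'$ with $0<\epsilon'<P(\varphi)-\varphi(\underaccent{\bar}{0})$; by upper semi-continuity there is a weak* open neighborhood $U$ of $\delta_{\underaccent{\bar}{0}}$ such that $P_\mu(\varphi)<\varphi(\underaccent{\bar}{0})+\epsilon'$ for all $\mu\in U$. The remaining task is to find $r'>0$ with $\mathscr{M}_\chi(r')\subset U$: if this failed, one could extract for each $r_k\downarrow 0$ a measure $\mu_k\in \mathscr{M}_\chi(r_k)\setminus U$; by weak*-compactness of $\mathscr{M}(G)$ a subsequence converges to some $\mu_\infty$, which by Lemma 5.3 lies in each $\mathscr{M}_\chi(r_k)$ and therefore in $\mathscr{M}_\chi(0)=\{\delta_{\underaccent{\bar}{0}}\}$, contradicting $\mu_k\notin U$.

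Combining these steps yields $\sup_{\mu\in\mathscr{M}_\chi(r')}P_\mu(\varphi)\leq \varphi(\underaccent{\bar}{0})+\epsilon'<P(\varphi)$, and then Proposition 5.1 together with Lemma 5.2 gives $P(\mathscr{P}(r'),\varphi)<P(\varphi)$. The main subtle step is the semi-continuity/compactness argument producing $r'$: it hinges critically on expansivity to get upper semi-continuity of $\mu\mapsto P_\mu(\varphi)$ and on the weak*-closedness of $\mathscr{M}_\chi(r)$ (Lemma 5.3), which in turn uses that $\chi$ is lower semi-continuous because $D_{100\gamma\epsilon+r_1}$ is closed.
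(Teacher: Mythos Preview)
Your proposal is correct and follows essentially the same route as the paper: apply Proposition~5.1 and Lemma~5.2 to bound $P(\mathscr{P}(r),\varphi)$ by $\sup_{\mu\in\mathscr{M}_\chi(r)}P_\mu(\varphi)$, identify $\mathscr{M}_\chi(0)=\{\delta_{\underaccent{\bar}{0}}\}$ via invariance and the orbit-cover $\bigcup_k G^k(\mathbb{T}^2\setminus D_{100\gamma\epsilon+r_1})=\mathbb{T}^2\setminus\{\underaccent{\bar}{0}\}$, and then use upper semi-continuity of $\mu\mapsto P_\mu(\varphi)$ (from expansivity) together with the weak*-compactness of $\mathscr{M}_\chi(r)$ (Lemma~5.3) to locate $r'$ with $\mathscr{M}_\chi(r')\subset U$. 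The only cosmetic difference is that you spell out the extraction-by-contradiction argument for $\mathscr{M}_\chi(r')\subset U$, whereas the paper simply invokes Lemma~5.3 at that step.
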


\subsection{Two-scale estimate}

Now we want to show that $$P(\mathscr{P}(r'),\varphi,\epsilon)=P(\mathscr{P}(r'),\varphi,\epsilon,100\epsilon).$$

Recall that
\begin{equation}
\begin{aligned}
P(\mathscr{P}(r'),\varphi,\epsilon)=\limsup_{n\rightarrow \infty}\frac{1}{n}\log\Lambda_n^{sep}(\mathscr{P}(r'),\varphi,\epsilon;G), \\
P(\mathscr{P}(r'),\varphi,\epsilon,100\epsilon)=\limsup_{n\rightarrow \infty}\frac{1}{n}\log\Lambda_n^{sep}(\mathscr{P}(r'),\varphi,\epsilon,100\epsilon;G).
\end{aligned}
\end{equation}

We make the following definition of the variation term of $\varphi$ in degree $n$ at scale $100\epsilon$, which is used throughout this section and $\mathsection 8$.
\begin{definition}
$\zeta(n)=\zeta(n,\varphi,100\epsilon):=\sup_{x\in X,y\in B_n(x,100\epsilon)}|S_n\varphi(y)-S_n\varphi(x)|$.
\end{definition}

Observe that 
$$\Lambda_n^{sep}(\mathscr{P}(r'),\varphi,\epsilon;G) \leq \Lambda_n^{sep}(\mathscr{P}(r'),\varphi,\epsilon,100\epsilon;G) \leq \Lambda_n^{sep}(\mathscr{P}(r'),\varphi,\epsilon;G)e^{\zeta(n)}.$$

In order to eliminate the scale $100\epsilon$, we prove the following lemma:

\begin{lemma}
$\limsup_{n\rightarrow \infty}\frac{1}{n}\zeta(n)=0$.
\end{lemma}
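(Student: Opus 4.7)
The plan is to combine the two-sided expansivity of $G$ at scale $100\epsilon$ (Proposition 3.8) with the uniform continuity of $\varphi$ on the compact space $\mathbb{T}^2$. The heuristic is that, for $y \in B_n(x,100\epsilon)$ and $k$ in the ``middle'' of the orbit segment, the pair $(G^kx, G^ky)$ is shadowed by a $100\epsilon$-tube for a long two-sided window $[k-N, k+N]$; expansivity then forces these points to be close, so they contribute negligibly to $|S_n\varphi(x) - S_n\varphi(y)|$. Only the $2N$ boundary indices on each end can give sizeable contributions, and their total cost is $O(1)$ in $n$, hence negligible after dividing by $n$.

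First I would establish the following consequence of expansivity: for every $\eta > 0$ there exists $N = N(\eta) \in \mathbb{N}$ such that whenever $d(G^j u, G^j v) \leq 100\epsilon$ for all $|j| \leq N$, one has $d(u,v) < \eta$. This follows from a standard compactness argument applied to the nested closed sets
\begin{equation*}
F_N := \{(u,v) \in \mathbb{T}^2 \times \mathbb{T}^2 : d(G^j u, G^j v) \leq 100\epsilon \text{ for all } |j| \leq N\},
\end{equation*}
whose intersection equals the diagonal $\Delta$ by two-sided expansivity at scale $100\epsilon$. The set $\{(u,v) : d(u,v) \geq \eta\}$ is compact and disjoint from $\Delta$, hence disjoint from some $F_N$.

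Next, for $y \in B_n(x,100\epsilon)$ and any $k$ with $N \leq k \leq n-1-N$, the definition of the Bowen ball gives $d(G^{k+j}x, G^{k+j}y) < 100\epsilon$ for all $-N \leq j \leq N$, so $(G^kx, G^ky) \in F_N$ and hence $d(G^kx, G^ky) < \eta$. Given any $\eta' > 0$, uniform continuity of $\varphi$ lets us shrink $\eta$ so that $d(u,v) < \eta$ implies $|\varphi(u) - \varphi(v)| < \eta'$. Splitting the telescoping sum into the $2N$ boundary terms (each crudely bounded by $2\|\varphi\|_\infty$) and the remaining middle terms (each bounded by $\eta'$), we obtain $\zeta(n) \leq 4N\|\varphi\|_\infty + n\eta'$. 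Dividing by $n$ and taking $\limsup$ yields $\limsup_{n\to\infty} \tfrac{1}{n}\zeta(n) \leq \eta'$, and since $\eta' > 0$ is arbitrary the limsup equals zero.

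The only subtlety, not really an obstacle, is to use that Proposition 3.8 provides \emph{two-sided} expansivity, which is what lets us center a symmetric window of fixed length $2N+1$ around each middle index $k$; the boundary threshold $N$ depends only on $\eta$, not on $n$, so the constant term $4N\|\varphi\|_\infty$ is absorbed after the division by $n$. No other input about the Katok map is required at this step.
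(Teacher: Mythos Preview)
Your argument is correct and is in fact the standard route for expansive homeomorphisms: expansivity plus compactness yields uniform two-sided expansivity (your $F_N$ argument), and then uniform continuity of $\varphi$ handles the middle of the orbit while the $2N$ boundary terms give an $O(1)$ contribution. The only cosmetic point is that you define $F_N$ with $\leq 100\epsilon$ while Proposition~3.8 is phrased with strict inequality; since the paper's proof of expansivity uses local product structure at the much larger scale $500\lambda\epsilon$, expansivity with $\leq$ at $100\epsilon$ follows just as well, so there is no gap.

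The paper takes a genuinely different route. Rather than invoking expansivity abstractly, it uses the local product structure (Lemma~3.7) to produce, for each $y\in B_n(x,100\epsilon)$, the intermediate point $z$ with $G^i(z)\in W^s_{100\gamma\epsilon}(G^i(x))\cap W^u_{100\gamma\epsilon}(G^i(y))$, and then splits $\zeta(n)\leq \zeta^s(n)+\zeta^u(n)$ along the stable and unstable leaves. For $\zeta^s$, one observes that the maximal stable displacement $d_n^s(x)$ is a monotone decreasing sequence of continuous functions converging pointwise to $0$ on the compact $\mathbb{T}^2$, hence uniformly by Dini, and this yields $\tfrac{1}{n}\zeta^s(n)\to 0$; the unstable case is symmetric. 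Your approach is more elementary and fully general for expansive systems, requiring nothing about the Katok map beyond Proposition~3.8. The paper's approach, while heavier here, fits the broader pattern of \S5--\S6 where the stable/unstable splitting is reused repeatedly (e.g.\ for the Bowen property), so the machinery is already in place.
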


We notice that the definition of $\zeta$ is not restricted to any of the collection of orbit segments. This will be particularly useful in $\mathsection 8$, where we try to obtain the uniform Gibbs property in a weak sense.
\begin{proof}
Recall that we have local product structure at $500\lambda\epsilon$. We know from Lemma 3.7 that for any $x\in \mathbb{T}^2$ and $y\in B_n(x,100\epsilon)$, there exists $z\in \mathbb{T}^2$ such that $G^i(z)=W^s_{100\gamma\epsilon}(G^i(x))\cap W^u_{100\gamma\epsilon}(G^i(y))$ for any $0\leq i \leq n-1$. We have 
\begin{equation}
    \begin{aligned}
    \zeta(n)
    &=\sup_{x\in \mathbb{T}^2,y\in B_n(x,100\epsilon)}|S_n\varphi(y)-S_n\varphi(x)|\\
    &\leq \sup_{x\in \mathbb{T}^2,y\in B_n(x,100\epsilon)}(|S_n\varphi(x)-S_n\varphi(z)|+|S_n\varphi(z)-S_n\varphi(y)|)\\
    &\leq \sup_{x\in \mathbb{T}^2,z\in W^s_{100\gamma\epsilon}(x)}|S_n\varphi(x)-S_n\varphi(z)|
    +\\
    &\sup_{y\in \mathbb{T}^2,G^{n-1}(z)\in W^u_{100\gamma\epsilon}(G^{n-1}(y))}|S_n\varphi(z)-S_n\varphi(y)|
    \end{aligned}
\end{equation}

To prove the lemma, it suffices to prove the following lemma:

\begin{lemma}
Define $\zeta^s(n):=\sup_{x\in \mathbb{T}^2,z\in W^s_{100\gamma\epsilon}(x)}|S_n\varphi(x)-S_n\varphi(z)|$. We have $\limsup_{n\rightarrow \infty}\frac{1}{n}\zeta^s(n)=0$. Similarly,  $\zeta^u(n):=\sup_{y\in \mathbb{T}^2,G^{n-1}(z)\in W^u_{100\gamma\epsilon}(G^{n-1}(y))}|S_n\varphi(z)-S_n\varphi(y)|$. As above, we have $\limsup_{n\rightarrow \infty}\frac{1}{n}\zeta^u(n)=0$. 
\end{lemma}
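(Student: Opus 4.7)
The plan is to establish a uniform-in-$(x,z)$ rate at which forward orbits starting on a common local stable leaf converge, combine this with the uniform continuity of $\varphi$ so that $|\varphi(G^i(x))-\varphi(G^i(z))|$ is small from some index $N$ onwards, and then split the telescoping Birkhoff sum to obtain a sublinear bound on $\zeta^s(n)$; the argument for $\zeta^u$ will be symmetric, using backward iteration along unstable leaves.

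For the uniform rate I would use the topological conjugacy $G = h \circ f_A \circ h^{-1}$ from Proposition 3.1(1) (with the variant for $G$ noted immediately after the proposition). Given $z \in W^s_{100\gamma\epsilon}(x)$, set $u=h^{-1}(x)$ and $v=h^{-1}(z)$; since $h^{-1}$ is a homeomorphism of the compact torus and therefore uniformly continuous, $d(u,v)\leq M$ for a constant $M=M(\epsilon)$ independent of $(x,z)$, and $v$ lies on the $f_A$-stable leaf through $u$. Linear hyperbolicity of $f_A$ gives $d(f_A^i(u), f_A^i(v)) \leq C\lambda^{-i}M$; applying $h$ and using its modulus of continuity $\omega_h$ (well-defined by uniform continuity) yields
\[
d(G^i(x), G^i(z)) \leq \omega_h(C\lambda^{-i}M),
\]
which tends to $0$ uniformly in $(x,z)$ as $i\rightarrow \infty$.

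With this uniform decay in hand the conclusion is straightforward. Given $\eta>0$, use uniform continuity of $\varphi$ to pick $\delta>0$ with $d(a,b)<\delta$ implying $|\varphi(a)-\varphi(b)|<\eta$, and then pick $N=N(\delta)$ such that $\omega_h(C\lambda^{-i}M)<\delta$ for all $i\geq N$. Splitting the telescoping sum gives $\zeta^s(n)\leq 2N\|\varphi\|_\infty+(n-N)\eta$ for $n\geq N$, so $\limsup_{n\rightarrow\infty}\zeta^s(n)/n\leq\eta$; since $\eta>0$ is arbitrary, $\limsup_{n\rightarrow\infty}\zeta^s(n)/n=0$. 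For $\zeta^u$, the condition $G^{n-1}(z)\in W^u_{100\gamma\epsilon}(G^{n-1}(y))$ places $G^{n-1}(y)$ and $G^{n-1}(z)$ on a common unstable leaf, and applying the same conjugacy argument to $G^{-1}$ (conjugate to $f_A^{-1}$) gives $d(G^i(y), G^i(z)) \leq \omega_h(C\lambda^{-(n-1-i)}M)$, uniformly less than $\delta$ once $n-1-i\geq N$. The identical splitting produces $\zeta^u(n)\leq 2N\|\varphi\|_\infty+n\eta$ and hence $\limsup_{n\rightarrow\infty}\zeta^u(n)/n=0$.

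The principal subtlety will be justifying that the decay of $d(G^i(x),G^i(z))$ is truly uniform in $(x,z)$ belonging to the compact set $K:=\{(x,z):z\in W^s_{100\gamma\epsilon}(x)\}$, given that the conjugacy $h$ is only a homeomorphism and not H\"older. This is the main (but mild) obstacle: non-H\"olderness affects only the quantitative speed at which $\omega_h(C\lambda^{-i}M)\rightarrow 0$, not the uniformity, which is automatic because $h$ and $h^{-1}$ are continuous on the compact torus and hence uniformly continuous with moduli depending only on $\epsilon$. Note that I deliberately avoid a Dini-type argument on $K$: one would need monotonicity of $i\mapsto d(G^i(x),G^i(z))$, which is not available in the ambient metric for the non-uniformly hyperbolic map $G$, whereas the conjugacy approach trades only speed for uniformity.
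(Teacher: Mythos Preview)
Your argument is correct and reaches the same endgame as the paper (uniform-in-$(x,z)$ decay of $d(G^i(x),G^i(z))$, then split the Birkhoff sum using uniform continuity of $\varphi$), but the route to that uniform decay differs. The paper defines $d^s_n(x):=\max\{d(G^{n-1}(x),G^{n-1}(z)):d_s(x,z)=100\gamma\epsilon\}$, asserts that $d^s_n(x)$ is continuous, pointwise decreasing to $0$, and then invokes Dini's theorem on the compact torus to upgrade to uniform convergence. You instead pull everything back through the conjugacy to $f_A$, use genuine exponential contraction $\lambda^{-i}$ of the linear map along its stable eigendirection, and push forward through the modulus of continuity $\omega_h$ of the (merely topological) conjugacy. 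Your approach makes the uniformity transparent and sidesteps the monotonicity claim $d^s_n\geq d^s_{n+1}$ in the \emph{ambient} metric that the paper's Dini argument rests on; that claim is correct for the Katok map (the stable distribution is non-expanding everywhere), but you are right that it is the step most sensitive to non-uniform hyperbolicity. The trade-off is that the paper's argument is intrinsic to $G$ and would survive in settings without a global conjugacy to a uniformly hyperbolic model, whereas yours exploits a structural feature specific to this example. One small correction: for $G$ (as opposed to $\widetilde{G}$) the conjugating homeomorphism is $\phi^{-1}\circ h$ rather than $h$ itself, as noted just after Proposition~3.1; this is cosmetic and changes nothing in your estimates.
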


To prove the first part of Lemma 5.7, we define 
$$d^s_n(x):=\max\{d(G^{n-1}(x),G^{n-1}(z)),z\in W^s_{100\gamma\epsilon}(x),d_s(x,z)=100\gamma\epsilon\}$$
for each $n\geq 1$ and $x\in \mathbb{T}^2$. Here the maximum makes sense as we only have two possible choice in $z$ when $x$ is given. We notice that along local stable leaf, $\{d^s_n(x)\}_{n\geq 1}$ is a sequence of continuous functions that pointwise converges to $0$ and $d^s_n(x)\geq d^s_{n+1}(x)$. As $\mathbb{T}^2$ is compact, the convergence of $d^s_n(x)$ to $0$ is uniform.

We want to show for any small $\epsilon_0>0$, there's $N=N(\epsilon_0)\in \mathbb{N}$ large enough such that $\frac{1}{n}\zeta^s(n)<\epsilon_0$ for any $n>N$. $\varphi$ is continuous on $\mathbb{T}^2$, thus uniformly continuous. For fixed small $\epsilon_0>0$, there exists $\delta_0>0$ such that when $x,y\in \mathbb{T}^2$, $d(x,y)<\delta_0$, we have $|\varphi(x)-\varphi(y)|<\frac{\epsilon_0}{2}$. By uniform convergence of $d_n^s$, there exists $m_0\in \mathbb{T}^2$ such that $d_n^s(x)<\delta_0$ for any $n>m_0$. Therefore $\zeta^s(n)<2m_0\varphi_0+\frac{(n-m_0)\epsilon_0}{2}$, where $\varphi_0:=\sup_{x\in \mathbb{T}^2}\varphi(x)$. Now it is clear that we can choose some $N \in \mathbb{N}$ such that $\frac{1}{n}\zeta^s(n)<\epsilon_0$ for all $n>N$. By making $\epsilon_0$ go to $0$, we end the proof of Lemma 5.7.

To prove the second part, instead of $d_n^s(x)$, we define a function $d_n^u(x)$ by $d^u_n(x):=\max\{d(x,z),f^{n-1}(z)\in W^u_{100\gamma\epsilon}(G^{n-1}(x)),d_u(G^{n-1}(x),G^{n-1}(z))=100\gamma\epsilon\}$. We obtain $d^u_n(x)$ converges uniformly to $0$, prove for any small $\epsilon_0$ we can find some $M=M(\epsilon_0)\in \mathbb{N}$ such that $\frac{1}{n}\zeta^u(n)<\epsilon_0$ for all $n>M$.

By applying Lemma 5.7 to (5.2), we complete the proof of Lemma 5.6.
\end{proof}

From (5.1) and Lemma 5.6 we have
\begin{equation}
\begin{aligned}
P(\mathscr{P}(r'),\varphi,\epsilon,100\epsilon) 
&=\limsup_{n\rightarrow \infty}\frac{1}{n}\log\Lambda_n^{sep}(\mathscr{P}(r'),\varphi,\epsilon,100\epsilon;f) \\
&\leq \limsup_{n\rightarrow \infty}\frac{1}{n}\log\Lambda_n^{sep}(\mathscr{P}(r'),\varphi,\epsilon;f)+\limsup_{n\rightarrow \infty}\frac{1}{n}\zeta(n) \\
&=\limsup_{n\rightarrow \infty}\frac{1}{n}\log\Lambda_n^{sep}(\mathscr{P}(r'),\varphi,\epsilon;f) \\
&=P(\mathscr{P}(r),\varphi,\epsilon). 
\end{aligned}
\end{equation}
which is the desired result for pressure gap.

Finally we add a comment on the gap condition $\varphi(\underaccent{\bar}{0})<P(\varphi)$. As both the left and right sides of the inequality changes continuously in $\varphi$ in the $C^0$ topology, we know the set of continuous potentials satisfying this gap condition is $C^0$-open. In fact, it is not hard to show that it is also $C^0$-dense, using the ergodic measures are entropy dense in the space of invariant measures. Further results concerning how common the gap is could be interesting and we leave that to the reader to explore. 
\section{Regularity of potential functions}

From the previous section we obtain the desired pressure estimate on the bad orbit segments for continuous potential $\varphi$ with $\varphi(\underaccent{\bar}{0})<P(\varphi)$. In this section we will verify the regularity condition required by Theorem 2.5. We will focus on the family of H\"older continuous potentials and geometric t-potential $\varphi _t^G(x)=t\varphi^{geo}_G(x)=-t\log\vert DG\vert _{E^u(x)}\vert$. We first state a result about the uniform expansion/contraction along local leaves $W^u$/$W^s$ of orbit segments in $\mathscr{G}(r)$.

\begin{lemma}
For $(x,n)\in \mathscr{G}(r)$ and $y\in W^s_{100\gamma\epsilon}(x)$, we have $d_s(G^i(x),G^i(y))\leq (\lambda(1-\beta))^{-ir}d_s(x,y)$ for any $0\leq i \leq n-1$. Similarly, for $(x,n)\in \mathscr{G}(r)$ and $f^{n-1}(y)\in W^u_{100\gamma\epsilon}(f^{n-1}(x))$ and $0\leq j \leq n-1$, we have $d_u(G^j(x),G^j(y))\leq (\lambda(1-\beta))^{-(n-1-j)r}d_u(f^{n-1}(x),f^{n-1}(y))$.
\end{lemma}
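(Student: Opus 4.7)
The plan is to establish a per-step contraction estimate along $W^s$ and then iterate. Concretely, I will show: (a) for every $z\in\mathbb{T}^2$ and every $v\in C_\beta(F^2,F^1)$, $|DG(z)v|\leq |v|$, and the stronger bound $|DG(z)v|\leq (\lambda(1-\beta))^{-1}|v|$ holds whenever $z\notin D_{r_1}$; and (b) for $y\in W^s_{100\gamma\epsilon}(x)$ and any $k\geq 0$, the entire leaf segment from $G^k(x)$ to $G^k(y)$ lies outside $D_{r_1}$ whenever $G^k(x)\notin D_{100\gamma\epsilon+r_1}$. Granted these, at each step $k$ the leaf length $d_s(G^k(x),G^k(y))$ either drops by a factor of $(\lambda(1-\beta))^{-1}$ (when $\chi(G^k(x))=1$) or at least does not grow (when $\chi(G^k(x))=0$). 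Iterating these single-step bounds yields
$$d_s(G^i(x),G^i(y))\leq (\lambda(1-\beta))^{-S_i\chi(x)}\,d_s(x,y),$$
and the definition of $\mathscr{G}(r)$ gives $S_i\chi(x)\geq ir$, finishing the first claim.

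For (a) outside $D_{r_1}$, $DG=A$; writing $v=v_1+v_2$ with $v_i\in F^i$ and $|v_1|\leq\beta|v_2|$, the identity $|Av|^2=\lambda^2|v_1|^2+\lambda^{-2}|v_2|^2$ together with $|v|^2\geq |v_2|^2$ yields $|Av|^2/|v|^2\leq\lambda^2\beta^2+\lambda^{-2}$, which is below $(\lambda(1-\beta))^{-2}$ provided $\beta$ is small enough that $1+\lambda^4\beta^2\leq(1-\beta)^{-2}$. Inside $D_{r_1}$ one has $DG=Dg$; differentiating $|\xi|^2=\xi_1^2+\xi_2^2$ along the flow using the variational equation (3.2) gives
$$\frac{d}{dt}|\xi|^2 = 2\log\lambda\bigl[\xi_1^2(\psi+2s_1^2\psi')-\xi_2^2(\psi+2s_2^2\psi')\bigr],$$
and combining the stable-cone bound $\xi_1^2\leq\beta^2\xi_2^2$ with the estimate $\psi'/\psi\leq 2\alpha/(s_1^2+s_2^2)$ already established in the proof of Lemma 3.4 shows this derivative is $\leq 0$ as long as $\beta^2(1+4\alpha)\leq 1$, which holds for our choice of parameters.

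Claim (b) follows by induction on $k$: (a) implies leaf lengths do not grow, so $d_s(G^k(x),G^k(y))\leq 100\gamma\epsilon$ for all $k$, and any point $z$ on the leaf segment joining $G^k(x)$ and $G^k(y)$ then satisfies $d(G^k(x),z)\leq d_s(G^k(x),z)\leq 100\gamma\epsilon$, so $|z|>r_1$ whenever $|G^k(x)|>100\gamma\epsilon+r_1$. The unstable statement is handled symmetrically by running the same argument for $G^{-1}$: outside $D_{r_1}$, $A^{-1}$ contracts the unstable cone by $(\lambda(1-\beta))^{-1}$ via the analogous linear-algebra estimate with $F^1,F^2$ swapped, and inside $D_{r_1}$ the variational equation run in reverse time yields the matching non-expansion. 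The backward-in-$i$ condition $\frac{1}{i}S_i\chi(G^{n-i}(x))\geq r$ built into $\mathscr{G}(r)$ reindexes, with $i:=n-1-j$ and $x':=G^{n-1}(x)$, precisely to $\sum_{k=0}^{n-2-j}\chi(G^{-k}(x'))\geq(n-1-j)r$, the forward Birkhoff sum for $G^{-1}$ starting at $x'$; this delivers the claimed factor $(\lambda(1-\beta))^{-(n-1-j)r}$. The main subtlety throughout is the inside-$D_{r_1}$ portion of (a), since there the map is only a weakly contracting perturbation of $A$ that degenerates at the origin, so one must extract non-expansion of the stable-cone Euclidean norm directly from the slowed-down flow rather than from linear algebra.
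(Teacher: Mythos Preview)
Your argument follows the same strategy as the paper's: a per-step bound on $\|DG|_{E^s}\|$---contraction by $(\lambda(1-\beta))^{-1}$ outside the perturbed region, non-expansion inside---iterated against the Birkhoff-sum lower bound $S_i\chi(x)\geq ir$ coming from membership in $\mathscr{G}(r)$. The paper's proof is terse and leaves the interior non-expansion implicit, so your variational-equation computation of $\frac{d}{dt}|\xi|^2$ and the geometric claim (b) in fact supply details the paper omits. One small caveat: claim (a) as stated for \emph{every} $v$ in the stable cone is stronger than what your ODE argument delivers, since a generic stable-cone vector need not remain in the cone under the forward flow for time $t\in[0,1]$; the argument is clean for $v\in E^s(z)$ (which is flow-invariant via the commutation $g_t\circ G=G\circ g_t$ and hence stays in the cone), and since leaf length $d_s$ is computed by integrating tangent vectors in $E^s$, that is all the lemma requires.
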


\begin{proof}
For any point $z$ lying on $W^s_{100\gamma\epsilon}(x)$ between $x$ and $y$, when $\chi(G^i(x))=1$, since $d(G^i(x),G^i(z))\leq 100\gamma\epsilon$, $G^i(z)$ is outside the perturbed area, therefore $\Vert DG\vert _{E^s(z)}\Vert \leq (\lambda(1-\beta))^{-1}$. Therefore, we have $\vert DG^i\vert _{E^s(z)}\vert \leq (\lambda(1-\beta))^{-ir}$. This proves the stable part. The unstable part is proved in a same way by considering the inverse iteration instead.
\end{proof}

\subsection{Regularity for H\"older continuous potential}

Suppose there are constants $K>0$ and $\alpha_0\in (0,1)$ such that our potential function $\varphi$ satisfies $|\varphi(x)-\varphi(y)|\leq Kd(x,y)^{\alpha_0}$ for all $x,y\in \mathbb{T}^2$. Our goal is to show that $\varphi$ has Bowen property at scale $100\epsilon$ on $\mathscr{G}(r)$ for any $0<r<1$.

\begin{lemma}
Given $(x,n)\in \mathscr{G}(r)$ and $y\in B_n(x,100\epsilon)$, we have
$d(G^k(x),G^k(y))\leq
100\gamma\epsilon((\lambda(1-\beta))^{-kr}+(\lambda(1-\beta))^{-(n-k-1)r}).$
\end{lemma}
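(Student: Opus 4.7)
The plan is to interpolate through the local product structure point from Lemma 3.7 and then apply the contraction estimate of Lemma 6.1 on each leg of the resulting stable--unstable path.

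First I would apply Lemma 3.7 to the pair $(x,y)$ with $y\in B_n(x,100\epsilon)$ to produce a (unique) point $z\in \mathbb{T}^2$ such that
\[
G^i(z)\in W^s_{100\gamma\epsilon}(G^i(x))\cap W^u_{100\gamma\epsilon}(G^i(y))
\qquad \text{for all } 0\le i\le n-1.
\]
In particular $z\in W^s_{100\gamma\epsilon}(x)$ and $G^{n-1}(z)\in W^u_{100\gamma\epsilon}(G^{n-1}(y))$, with $d_s(x,z)\le 100\gamma\epsilon$ and $d_u(G^{n-1}(y),G^{n-1}(z))\le 100\gamma\epsilon$.

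Next I would apply Lemma 6.1 to control the two legs. Since $(x,n)\in \mathscr{G}(r)$ and $z\in W^s_{100\gamma\epsilon}(x)$, the stable estimate in Lemma 6.1 yields
\[
d(G^k(x),G^k(z))\le d_s(G^k(x),G^k(z))\le (\lambda(1-\beta))^{-kr}\,d_s(x,z)\le 100\gamma\epsilon\,(\lambda(1-\beta))^{-kr}
\]
for every $0\le k\le n-1$. Similarly, since $G^{n-1}(z)\in W^u_{100\gamma\epsilon}(G^{n-1}(y))$, the unstable estimate gives
\[
d(G^k(y),G^k(z))\le d_u(G^k(y),G^k(z))\le (\lambda(1-\beta))^{-(n-1-k)r}\,d_u(G^{n-1}(y),G^{n-1}(z))\le 100\gamma\epsilon\,(\lambda(1-\beta))^{-(n-1-k)r}.
\]
A triangle inequality $d(G^k(x),G^k(y))\le d(G^k(x),G^k(z))+d(G^k(z),G^k(y))$ then yields the stated bound.

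I do not expect any real obstacle here: the content is already present in Lemmas 3.7 and 6.1, and the only thing to verify is that the ``good'' hypothesis $(x,n)\in \mathscr{G}(r)$ legitimately feeds into \emph{both} invocations of Lemma 6.1 (forward along the stable leaf of $x$, and backward along the unstable leaf through $G^{n-1}(y)$), which it does by the symmetric definition of $\mathscr{G}(r)$ involving $S_i\chi(x)\ge ir$ on both ends. The only minor care is to note that the local product point $z$ sits simultaneously on the appropriate local leaves through every $G^i(x)$ and $G^i(y)$, so the same $z$ works for all $k$ between $0$ and $n-1$; this is exactly what Lemma 3.7 guarantees.
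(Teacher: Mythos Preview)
Your overall strategy matches the paper's: produce the local-product point $z$ via Lemma~3.7, bound the stable leg $d(G^k(x),G^k(z))$ with Lemma~6.1, bound the unstable leg $d(G^k(y),G^k(z))$, and finish by the triangle inequality. The stable leg is handled exactly as you say.

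The gap is in the unstable leg. The unstable clause of Lemma~6.1 is stated for ``$(x,n)\in\mathscr{G}(r)$ and $f^{n-1}(y)\in W^u_{100\gamma\epsilon}(f^{n-1}(x))$'': the base point of the orbit segment must itself be good, and the local unstable leaf must pass through \emph{its} orbit. In your application the unstable segment joins $G^k(y)$ and $G^k(z)$, but neither $(y,n)$ nor $(z,n)$ is known to lie in $\mathscr{G}(r)$, so Lemma~6.1 does not apply verbatim. Invoking the symmetric two-sided form of $\mathscr{G}(r)$ does not help directly: that symmetry controls $\chi$-averages along the orbit of $x$, not of $y$ or $z$.

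The paper supplies the missing link. Since $G^k(y)\in B_{100\epsilon}(G^k(x))$ and $G^k(z)\in W^s_{100\gamma\epsilon}(G^k(x))$, both lie in $B_{100\gamma\epsilon}(G^k(x))$; for $\beta$ small the cone condition makes the local unstable arc between them nearly straight, and convexity of the ball keeps that whole arc inside $B_{100\gamma\epsilon}(G^k(x))$ for every $0\le k\le n-1$. Hence whenever $\chi(G^k(x))=1$ the entire unstable arc lies outside $D_{r_1}$, and one can rerun the \emph{proof} of Lemma~6.1 using the $\chi$-values along the orbit of $x$ (rather than of $y$ or $z$) to obtain $d(G^k(y),G^k(z))\le 100\gamma\epsilon(\lambda(1-\beta))^{-(n-1-k)r}$. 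Add this observation and your argument is complete.
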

\begin{proof}
As seen in the Lemma 3.7, by applying local product structure we are able to get $z\in \mathbb{T}^2$ such that $G^i(z)=W^s_{100\gamma\epsilon}(G^i(x))\cap W^u_{100\gamma\epsilon}(G^i(y))$ for $0\leq i \leq n-1$. By Lemma 6.1, we see immediately $d(G^k(x),G^k(z))\leq 100\gamma\epsilon(\lambda(1-\beta))^{-kr}$. To get the estimate for $d(G^k(y),G^k(z))$, we notice that for $\beta>0$ small enough, both $G^k(y)$ and $G^k(z)$ are in $B_{100\gamma\epsilon}(G^k(x))$. Because of the convexity of $B_{100\gamma\epsilon}(G^k(x))$, we can make the local unstable segment between $G^k(y)$ and $G^k(z)$ lie in $B_{100\gamma\epsilon}(G^k(x))$ for all $0\leq k \leq n-1$. A similar argument to the proof of Lemma 6.1 provides $d(G^k(x),G^k(z))\leq 100\gamma\epsilon(\lambda(1-\beta))^{-(n-1-k)r}$.
\end{proof}

With the help of Lemma 6.2, we are able to conclude the desired regularity condition for $\varphi$ (therefore for all H\"older continuous potential) over $\mathscr{G}(r)$, which is stated in the following proposition.
\begin{proposition}
$\varphi$ has Bowen property on $\mathscr{G}(r)$ at scale $100\epsilon$ for any $0<r<1$.
\end{proposition}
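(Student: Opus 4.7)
The plan is to combine the distance estimate from Lemma 6.2 with the Hölder exponent $\alpha_0$ to produce a telescoping geometric series that is summable independently of $n$. The key observation is that the orbit segment $(x,n) \in \mathscr{G}(r)$ forces a definite fraction (at least $r$) of the first $k$ iterates and of the last $n-k-1$ iterates to lie outside the perturbed disk, which yields uniform hyperbolic rates along local leaves as recorded in Lemma 6.1 and propagated to the two-sided case in Lemma 6.2.

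Concretely, fix $(x,n)\in\mathscr{G}(r)$ and $y\in B_n(x,100\epsilon)$. For each $0\le k\le n-1$, Hölder continuity and Lemma 6.2 give
\begin{equation*}
|\varphi(G^k x)-\varphi(G^k y)|\le K\,d(G^k x,G^k y)^{\alpha_0}\le K(100\gamma\epsilon)^{\alpha_0}\bigl((\lambda(1-\beta))^{-kr}+(\lambda(1-\beta))^{-(n-k-1)r}\bigr)^{\alpha_0}.
\end{equation*}
Using the elementary inequality $(a+b)^{\alpha_0}\le a^{\alpha_0}+b^{\alpha_0}$ valid for $a,b\ge 0$ and $\alpha_0\in(0,1)$, and then summing over $k=0,\dots,n-1$, I would bound $|S_n\varphi(x)-S_n\varphi(y)|$ by
\begin{equation*}
K(100\gamma\epsilon)^{\alpha_0}\sum_{k=0}^{n-1}\Bigl((\lambda(1-\beta))^{-kr\alpha_0}+(\lambda(1-\beta))^{-(n-k-1)r\alpha_0}\Bigr).
\end{equation*}
Since $\alpha$ was fixed so small that $\lambda(1-\beta)>1$, the common ratio $q:=(\lambda(1-\beta))^{-r\alpha_0}$ lies in $(0,1)$ for every $0<r<1$, so each of the two geometric sums is dominated by $\frac{1}{1-q}$. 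This produces a constant
\begin{equation*}
K_0:=\frac{2K(100\gamma\epsilon)^{\alpha_0}}{1-(\lambda(1-\beta))^{-r\alpha_0}}
\end{equation*}
that depends only on $\varphi$, $r$, $\alpha$, $\beta$, and $\epsilon$ — not on $n$, $x$, or $y$ — and thus witnesses the Bowen property at scale $100\epsilon$ on $\mathscr{G}(r)$.

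There is essentially no obstacle at this stage: Lemma 6.2 has already absorbed the only subtle point, namely the use of local product structure and the doubled application of Lemma 6.1 to both the stable and unstable sides. The only things to check are that the exponent $r\alpha_0$ is strictly positive (which holds since $r>0$ and the potential is genuinely Hölder with $\alpha_0>0$) and the subadditivity of $t\mapsto t^{\alpha_0}$ on $[0,\infty)$, both routine. I would record the resulting constant explicitly to emphasize that $K_0$ blows up as $r\downarrow 0$, explaining why the decomposition cannot simply ignore the prefix/suffix and must deposit short near-origin excursions into $\mathscr{P}$ and $\mathscr{S}$.
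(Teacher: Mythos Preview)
Your proof is correct and follows essentially the same route as the paper: Lemma 6.2 plus H\"older continuity reduces the problem to summing a geometric series with ratio $(\lambda(1-\beta))^{-r\alpha_0}<1$. The only cosmetic difference is that the paper handles $(a+b)^{\alpha_0}$ via $(a+b)^{\alpha_0}\le 2^{\alpha_0}\max\{a,b\}^{\alpha_0}$ rather than your subadditivity inequality, but both yield the same uniform bound.
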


\begin{proof}
Given $(x,n)\in \mathscr{G}(r)$ and $y\in B_n(x,100\epsilon)$, from Lemma 6.2, H\"older continuity of $\varphi$ and $\lambda(1-\beta)>1$ we have
\begin{equation}
\begin{aligned}
|S_n\varphi(x)-S_n\varphi(y)| 
&\leq K\sum_{k=0}^{n-1}d(G^k(x),G^k(y))^{\alpha_0}\\ 
&\leq K(100\gamma\epsilon)^{\alpha_0}\sum_{k=0}^{n-1}((\lambda(1-\beta))^{-kr}+(\lambda(1-\beta))^{-(n-k-1)r})^{\alpha_0}.
\end{aligned}
\end{equation}

To estimate $\sum_{k=0}^{n-1}((\lambda(1-\beta))^{-kr}+(\lambda(1-\beta))^{-(n-k-1)r})^{\alpha_0}$, we have
\begin{equation}
\begin{aligned}
&\sum_{k=0}^{n-1}((\lambda(1-\beta))^{-kr}+(\lambda(1-\beta))^{-(n-k-1)r})^{\alpha_0}  \\
&\leq \sum_{k=0}^{n-1}(2(\max\{(\lambda(1-\beta))^{-kr},(\lambda(1-\beta))^{-(n-k-1)r}\}))^{\alpha_0} \\
&= 2^{\alpha_0}\sum_{k=0}^{n-1}(\max\{(\lambda(1-\beta))^{-kr},(\lambda(1-\beta))^{-(n-k-1)r}\})^{\alpha_0} \\
&\leq 2^{\alpha_0}\sum_{k=0}^{\infty}2(\lambda(1-\beta))^{\alpha_0}
= K_0 < \infty
\end{aligned}
\end{equation}

By (6.2) we have $|S_n\varphi(x)-S_n\varphi(y)|\leq KK_0(100\gamma\epsilon)^{\alpha_0} < \infty$.
\end{proof}

\subsection{Regularity for geometric t-potential}

In the uniformly hyperbolic case, the map $x\rightarrow E^{u}(x)$ is known to be H\"older continuous. Since $\log(x)$ function is Lipschitz continuous when $x$ is bounded away from $0$ and $\infty$, the geometric t-potential is automatically H\"older continuous.

Unfortunately, this argument does not extend to the non-uniformly hyperbolic Katok map. Though it is the limit of a sequence of Anosov diffeomorphisms, the respective H\"older exponent can be shown to blow up to $0$ by following a standard argument in \cite{Pes00}, Proposition 3.9. Therefore, the regularity for $\varphi_t(x)$ is not trivial.

Here we follow the spirit in the proof of regularity of geometric t-potential for Bonatti-Viana diffeomorphisms (see \cite{CFT17-1}). Compared to the dominated splittings, the additional technical difficulties are from the non-uniform expansion rate in $E^u$ over $E^s$.

The first few steps of the proof are similar to the Bonatti-Viana example. We will sketch these steps, explain on some technical details and underline the difference in the following steps for two proofs. 

\begin{proposition}
$\varphi^{geo}_G(x)$ satisfies Bowen property at scale $100\epsilon$ on $\mathscr{G}(r)$.
\end{proposition}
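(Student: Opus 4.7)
The plan is to follow the structure of the proof of the Bowen property for the geometric $t$-potential of the Bonatti-Viana diffeomorphism in \cite{CFT17-1}, supplying extra arguments to handle the non-uniform hyperbolicity from the neutral fixed point. Since $E^u$ is not globally H\"older, one cannot reduce to the H\"older case of $\mathsection 6.1$; instead I would exploit the $DG$-invariance of $E^u$ together with the fact that orbit segments in $\mathscr{G}(r)$ spend a density at least $r$ of their time outside the perturbed disc. Concretely, given $(x,n)\in\mathscr{G}(r)$ and $y\in B_n(x,100\epsilon)$, apply Lemma 3.7 to produce a local product point $z$ with $G^i(z)\in W^s_{100\gamma\epsilon}(G^i(x))\cap W^u_{100\gamma\epsilon}(G^i(y))$ for every $0\leq i\leq n-1$, and reduce via the triangle inequality to bounding the stable-pair term $|S_n\varphi^{geo}_G(x)-S_n\varphi^{geo}_G(z)|$ and the unstable-pair term $|S_n\varphi^{geo}_G(z)-S_n\varphi^{geo}_G(y)|$ separately.

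For the unstable-pair term, the $DG$-invariance of $E^u$ rewrites the Birkhoff-sum difference as a log-Jacobian ratio along the unstable arc joining $G^i(y)$ to $G^i(z)$, which is a standard bounded-distortion computation. By the second part of Lemma 6.1 the length of this arc at time $i$ is at most $(\lambda(1-\beta))^{-(n-1-i)r}\cdot 100\gamma\epsilon$, and $G$ is smooth on any unstable leaf, so summing over $i$ yields a uniform geometric bound, exactly as in \cite{CFT17-1}.

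For the stable-pair term I need to compare $\log|DG|_{E^u(G^i(x))}|$ with $\log|DG|_{E^u(G^i(z))}|$ for base points on a common local stable leaf; the first part of Lemma 6.1 gives $d(G^i(x),G^i(z))\leq (\lambda(1-\beta))^{-ir}\cdot 100\gamma\epsilon$. At indices $i$ with $G^i(x)\notin D_{100\gamma\epsilon+r_1}$ the map $G$ agrees with the linear Anosov map $f_A$, so $w\mapsto\log|DG|_{E^u(w)}|$ is smooth and the contribution at step $i$ decays geometrically; at the complementary ``bad'' indices I would use smoothness of $G$ on $D_{r_0}$ together with the cone estimate of Lemma 3.4 to bound the angle between $E^u(G^i(x))$ and $E^u(G^i(z))$ by a constant multiple of $\beta$, and invoke the $\mathscr{G}(r)$ condition to keep the total accumulated error from such indices uniformly bounded in $n$.

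The main obstacle is the stable-pair estimate, specifically producing a quantitative comparison of $E^u$ at nearby points across the perturbed disc where $E^u$ fails to be H\"older. The definition of $\mathscr{G}(r)$ is engineered precisely so that the good indices contribute a summable geometric series via the exponential contraction of stable distances, while the bad indices contribute only a bounded accumulated error, controlled by the smoothness of $G$ on a compact neighbourhood of $\underaccent{\bar}{0}$ and by the narrowness of the invariant cones when $\alpha$ is small.
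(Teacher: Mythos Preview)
Your decomposition via Lemma 3.7 and your unstable-pair bounded-distortion argument are fine, and indeed the latter is slightly different from what the paper does (the paper bounds the Grassmannian distance using the $C^1$ smoothness of $W^u$, while you telescope the log-Jacobian along the unstable arc). Both work.

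The gap is in the stable-pair term. You write that at good indices $i$ (where $G^i(x)\notin D_{100\gamma\epsilon+r_1}$) the map $w\mapsto\log|DG|_{E^u(w)}|$ is smooth and the step-$i$ contribution decays geometrically. But $DG=A$ being constant there does not make this function smooth: the composition $w\mapsto E^u(w)$ is only continuous, and $E^u(G^i(x))$ is \emph{not} the eigenspace of $A$ in general (it depends on the whole backward orbit, which visits the perturbed disc). What decays geometrically along the stable leaf is the base-point distance $d(G^i(x),G^i(z))$; the quantity you actually need to control is the Grassmannian distance $d_{G_r}(E^u(G^i(x)),E^u(G^i(z)))$, and nothing you have written bounds the latter by the former. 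Likewise, at bad indices you propose to bound the angle by $C\beta$ using the cone from Lemma 3.4; but the number of bad indices can be as large as $(1-r)n$, so summing gives $C\beta(1-r)n$, which is linear in $n$, not uniformly bounded. Invoking the $\mathscr{G}(r)$ condition here only controls the \emph{density} of bad times, not their total count.

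This is exactly the ``main obstacle'' you flag in your last paragraph, and it is not bypassed by the sketch. The paper confronts it head-on: it sets up orthonormal frames along the stable leaf with the first vector in $E^s$, writes $DG$ in these coordinates as matrices $A_k^z$, and runs a graph-transform argument to obtain a recursion $D_{k+1}\leq P_k(1+Qu^k)D_k+Qu^k$ for the norm $D_k$ measuring the angle between $E^u(G^k(x))$ and $E^u(G^k(z))$. Here $P_k\leq 1$ always and $P_k\leq\lambda_0<1$ precisely at good indices; the $\mathscr{G}(r)$ hypothesis then enters multiplicatively via $\prod_{i\leq j}P_i\leq\lambda_0^{(j+1)r}$, which is what forces exponential decay of $D_k$ despite $P_k$ failing to contract at individual bad indices. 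This multiplicative use of the density condition is the missing idea in your outline.
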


\begin{proof}
We first decompose $\varphi^{geo}_G(x):\mathbb{T}^2\rightarrow \mathbb{R}$ into $\psi' \circ E^u$. Here $E^u:x\rightarrow E^u(x)$ is a map from $\mathbb{T}^2$ to $G^1$, where $G^1$ is the one-dimensional Grassmannian bundle over $\mathbb{T}^2$ and $\psi'$ sends $E\in G^1$ to $-\log \vert DG(x)|_{E}\vert$. By identifying $G^1$ with $\mathbb{T}^2 \times \text{Gr}(1,\mathbb{R}^2)$ and writing out $\psi'$ as a composition of Lipschitz and smooth functions, it is proved in \cite{CFT17-1}, Lemma A.1 that given $G$ that is $C^{1+\alpha}$, the map $\psi'$ is H\"older continuous with exponent $\alpha$. 

We need to obtain a similar estimate for the distance in the tangent component $d_H(E^u(G^k(x)),E^u(G^k(y)))$ as in Lemma 6.2, where $d_H$ means the Hausdorff distance. This estimate, together with Lemma 6.2, gives us the Grassmannian bundle version of Lemma 6.2. By applying H\"older continuity of $\psi'$ and following the idea in Proposition 6.3, we are able to derive Bowen property for $\varphi^{geo}_G$.

For the remaining part of the proof we focus on proving the following

\begin{proposition}
For every $0<r<1$, There are $C\in \mathbb{R}$ and $\theta<1$ such that for every $(x,n)\in \mathscr{G}(r), y\in B_{100\epsilon}(x,n)$ and $0\leq k \leq n-1$, we have
$$d_{G_r}(E^u(G^k(x)),E^u(G^k(y))) \leq C(\theta^k+\theta^{n-1-k}).$$

Here, $d_{G_r}$ is the metric on $\text{Gr}(1,\mathbb{R}^2)$ defined as $d_{G_r}(E,E')=d_H(E\cap S^1,E'\cap S^1)$, where $d_H$ is the usual Hausdorff metric on compact subspace $S^1\subset \mathbb{R}^2$.

\end{proposition}

To prove this proposition, again by local product structure at scale $100\lambda(1+\beta)\gamma\epsilon$, we apply Lemma 3.7 to get $z\in \mathbb{T}^2$ such that $G^k(z)=W^s_{100\gamma\epsilon}(G^k(x))\cap W^u_{100\gamma\epsilon}(G^k(y))$ for $0\leq k \leq n-1$. We will estimate $d_{G_r}(E^u(G^k(x)),E^u(G^k(y)))$ in terms of $d_{G_r}(E^u(G^k(x)),E^u(G^k(z)))$ and $d_{G_r}(E^u(G^k(z),E^u(G^k(y)))$. Notice that $T_xW^u(x)=E^u(x)$ and $E^u$ is continuous, $W^u$ is $C^1$, so there is a constant $C$ such that $d_{G_r}(E^u(G^k(z),E^u(G^k(y))) \leq Cd(G^k(z),G^k(y)) \leq 100C\gamma\epsilon(\lambda(1-\beta))^{-(n-k-1)r}$. Therefore, to prove Proposition 6.5, it suffices to estimate the distance in $E^u$ along local stable leaves.

For $(x,n)\in \mathscr{G}(r)$ and $z\in W_{100\gamma\epsilon}^s(x)$, for any $0\leq k \leq n-1$ let $(e_{z,k}^i)_{i=1}^2$ be an orthonormal basis for $T_{G^k(z)}\mathbb{T}^2$ such that $E^s(G^k(z))=\text{span}(e_{z,k}^1)$. There is a way of choosing $(e_{z,k}^i)_{i=1}^2$ so that for every $k,i$, the map $z\rightarrow e_{z,k}^i$ is $K$-Lipschitz on $W_{100\gamma\epsilon}^s(x)$, where $K$ is independent of $x$,$n$,$i$ and $k$. This is because on small neighborhoods $U\subset \text{Gr}(1,\mathbb{R}^2)$, one can define a Lipschitz map $U\rightarrow \mathbb{R}\times \mathbb{R}$ that gives each element in $U$ an orthonormal basis. Since $\mathbb{T}^2$ is compact, we can choose this Lipschitz constant to be uniform in terms of $z$. On the other hand, since we are working on the local stable leaves and $(x,n)\in \mathscr{G}(r)$, from which we have an overall exponential contraction in $d_s$ under $G$, therefore have $K$ to be independent of $k$.

The fact that $z\rightarrow e_{z,k}^i$ is uniformly Lipschitz allows us to compute the term $d_{G_r}(E^u(G^k(x)),E^u(G^k(y)))$ using their coordinate representations in $e_{z,k}^i$. Let $\pi_{z,k}:T_{G^k(z)}\mathbb{T}^2\rightarrow \mathbb{R}^2$ be the coordinate representation in the basis of $e_{z,k}^i$. Let $A_k^z:\mathbb{R}^2 \rightarrow \mathbb{R}^2$ be the respective coordinate representation of $DG_{G^k(z)}$, i.e. $\pi_{z,k+1}\circ DG_{G^k(z)} = A_k^z\circ \pi_{z,k}$.

Now it suffices to show that $d_{G_r}(E_k^z,E_k^x)\leq C\theta^k$ where $E_k^x=\pi_{x,k}E^u(G^k(x))$. To show this, we need to study the dynamics of $A_k^z$ and $A_k^x$. Notice that by $E^s(G^k(z))=\text{span}(e_{z,k}^1)$, we have $A_k^z(Z)=Z$, where $Z=\mathbb{R}\times \{0\}\subset \mathbb{R}^2$. Let $\Omega$ be the set of subspaces $E\subset \mathbb{R}^2$ such that $Z\oplus E=\mathbb{R}^2$. Obviously $E_k^z\in \Omega$. To measure the $d_{G_r}(E_k^z,E_k^x)$, for $E\subset \Omega$, let $L_k^E:E_k^x\rightarrow Z$ be the linear map whose graph is $E$. From standard trigonometric computation we are able to get $\sin(d_{G_r}(E_k^x,E))\leq \Vert L_k^E \Vert$. If $\Vert L_k^{E_k^z} \Vert$ is decreasing exponentially fast in $k$, we know $\sin(d_{G_r}(E_k^x,E_k^z))$ will give approximately the value of $d_{G_r}(E_k^x,E_k^z)$, which is exactly what we want.

Now we want to estimate $\Vert L_k^{E_k^z} \Vert$ in terms of the dynamics of $A_k^z$ and $A_k^x$. Define $P:E_{k+1}^x\rightarrow A_k^zE_{k}^x$ to be the projection along $Z$, Lemma A.4 in \cite{CFT17-1} shows by another trigonometric argument that 
\begin{equation}
\begin{aligned}
L_{k+1}^{A_k^zE_k^z}+\text{Id} = (A_k^z|_Z \circ L_k^{E_k^z} \circ A_k^z|_{E_k^x}^{-1})\circ P.
\end{aligned}
\end{equation}
And in particular
\begin{equation}
\begin{aligned}
\Vert L_{k+1}^{A_k^zE_k^z} \Vert \leq \Vert A_k^z|_Z \Vert \cdot \Vert A_k^z|_{E_k^x}^{-1} \Vert \cdot \Vert P \Vert \cdot \Vert A_k^z|_{E_k^x}^{-1} \Vert + \Vert P-\text{Id} \Vert.
\end{aligned}
\end{equation}
By applying the H\"older continuity of $DG$, Lipschitz continuity of $e_{z,k}^i$ and $z\in W_{100\gamma\epsilon}^s(x)$ we get a constant $C$ independent of $x,z,n,i,k$ such that $\Vert A_k^z-A_k^x \Vert \leq C(100\gamma\epsilon)^{\alpha_0}(\lambda(1-\beta))^{-r\alpha_0}$. Therefore, we have 
\begin{equation}
\begin{aligned}
d_{G_r}(E_{k+1}^x,A_k^zE_{k}^x)=d_{G_r}(A_k^xE_{k}^x,A_k^zE_{k}^x) \leq C'(100\gamma\epsilon)^{\alpha_0}(\lambda(1-\beta))^{-r\alpha_0}
\end{aligned}
\end{equation}
for another constant $C'$ that is also independent of $x,z,n,i,k$. Take any $v\in E_{k+1}^x$ and look at the triangle formed by $v$, $Pv\in A_k^zE_k^x$ and $Pv-v=(P-\text{Id})v\in Z$. Then $\frac{\Vert Pv-v \Vert}{\Vert v \Vert}=\frac{\sin{\theta_1}}{\sin{\theta_2}}$, where $\theta_1$ is the angle between $v$ and $Pv$, $\theta_2$ is the angle between $Pv$ and $Pv-v$. We know $\theta_2$ is uniformly bounded away from $0$ and $\sin{\theta_1} \leq C''(100\gamma\epsilon)^{\alpha_0}(\lambda(1-\beta))^{-rk\alpha_0}$ for some constant $C''$ by (6.5). Therefore we have $\frac{\Vert Pv-v \Vert}{\Vert v \Vert} \leq C'''(100\gamma\epsilon)^{\alpha_0}(\lambda(1-\beta))^{-rk\alpha_0}$ for some constant $C'''$ independent of $x,z,n,i,k$. This gives the following:
\begin{equation}
\begin{aligned}
\Vert P-\text{Id} \Vert \leq C'''(100\gamma\epsilon)^{\alpha_0}(\lambda(1-\beta))^{-rk\alpha_0}
\end{aligned}
\end{equation}

Now we put (6.6) in (6.4) and get
\begin{equation}
\begin{aligned}
\Vert L_{k+1}^{A_k^zE_k^z} \Vert &\leq \Vert A_k^z|_Z \Vert \cdot \Vert A_k^z|_{E_k^x}^{-1} \Vert (1+C'''(100\gamma\epsilon)^{\alpha_0}(\lambda(1-\beta))^{-rk\alpha_0}) \Vert A_k^z|_{E_k^x}^{-1} \Vert \\
&+ C'''(100\gamma\epsilon)^{\alpha_0}(\lambda(1-\beta))^{-rk\alpha_0}.
\end{aligned}
\end{equation}
We write $\Vert A_k^z|_Z \Vert \cdot \Vert A_k^z|_{E_k^x}^{-1} \Vert$ as $P_k$.
There exists a constant $\lambda_0$ which satisfies the following properties:
\begin{enumerate}
    \item $\lambda_0\in (0,1)$.
    \item When $\chi(G^k(x))=1$, $P_k \leq \lambda_0 $.
\end{enumerate}
It is also easy to see that $P_i \leq 1$. Therefore, we have for any $(x,n)\in \mathscr{G}(r)$ and $z\in W_{100\gamma\epsilon}^s(x)$, $\prod_{i=0}^{j}P_i\leq {\lambda_0}^{(j+1)r}$ for $0\leq j \leq n-1$.

Write $\Vert L_{k+1}^{A_k^zE_k^z} \Vert$ as $D_k$, $C'''(100\gamma\epsilon)^{\alpha_0}$ as $Q$ and $(\lambda(1-\beta))^{-r\alpha_0}$ as $u$. We rewrite (6.7) into
\begin{equation}
\begin{aligned}
D_{k+1}\leq P_k(1+Qu^k)D_k+Qu^k.
\end{aligned}
\end{equation}

Up to this step there are no significant differences between the case of Bonatti-Viana diffeomorphisms and the Katok map. Nevertheless, for a dominated splitting example such as Bonatti-Viana diffeomorphisms, $P_k$ is strictly less than some constant $\lambda''<1$ for all $k$. Here for the Katok map, we don't have a uniform estimate on $P_k$. We will use $(x,n)\in \mathscr{G}(r)$ to help us get the desired exponential decay here.

Define $C_k:=\frac{D_k}{\nu^k}$, where $0<\nu<1$ is determined later and very close to $1$. Now (6.8) is turned into 

\begin{equation}
\begin{aligned}
C_{k+1}\leq \frac{P_k}{\nu}(1+Qu^k)C_k+Q\frac{u^k}{{\nu}^{k+1}}.
\end{aligned}
\end{equation}

We want to prove that $C_k$ is bounded for a suitable choice of $\nu$. We know $C_0=D_0\leq B$ for some $B>0$ by compactness of $\mathbb{T}^2$ and continuity of the unstable distribution.
Construct a sequence $\{F_k\}_{k\in \mathbb{N}\cup \{0\}}$ such that $F_0=B$ and
$$F_{k+1}=\begin{cases}
\frac{1}{\nu}(1+Qu^k)F_k+Q\frac{u^k}{{\nu}^{k+1}} \hfill \qquad \text{if } \chi(G^k(x))=0\\
\frac{\lambda_0}{\nu}(1+Qu^k)F_k+Q\frac{u^k}{{\nu}^{k+1}} \hfill \qquad \text{if } \chi(G^k(x))=1
\end{cases}$$

We notice that for different $(x,n)$ we will generate different sequence $\{F_k\}_{k\in \mathbb{N}\cup \{0\}}$. We want to show $F_k$ is uniformly bounded for all $(x,n)\in \mathscr{G}(r)$ with the fixed chosen $\nu$. This makes $C_k$ bounded by some number independent of $x,n,z,k$, as $C_k\leq F_k$ by the properties of $P_k$ and $\lambda_0$.

We first add some assumptions to $\nu$. We want $\frac{{u}^{\frac{r}{2}}}{\nu}<1$ and $\frac{{\lambda_0}^{\frac{r}{2}}}{\nu}<1$. Then choose two constants $\zeta>\frac{1}{\nu}$ and $\frac{\lambda_0}{\nu}<\eta<1$ such that $u<\nu\eta$ and ${\zeta}^{1-\frac{r}{2}}{\eta}^{\frac{r}{2}}<1$. We can choose such $\zeta$ and $\eta$ because $(\frac{u}{\nu})^{\frac{r}{2}}(\frac{1}{\nu})^{1-\frac{r}{2}}<1$ and $(\frac{\lambda_0}{\nu})^{\frac{r}{2}}(\frac{1}{\nu})^{1-\frac{r}{2}}<1$ by our assumption on $\nu$. Fix $\nu$ from now on.

There is an $N\in \mathbb{N}$ large enough such that when $k\geq N$, $\frac{1}{\nu}(1+Qu^k)<\zeta$ and $\frac{\lambda_0}{\nu}(1+Qu^k)<\eta$.

Now among all possible $(x,n)\in \mathscr{G}(r)$ with $n<N$, $F_k=F_k(x,n)$ is uniformly bounded by some $M>0$ for any $0\leq k \leq n$ due to the compactness of $\mathbb{T}^2$ and finiteness in the choice of $k,n$. We construct a new sequence$\{H_k\}_{k \geq N}$ such that $H_N=M$ and
$$H_{k+1}=\begin{cases}
\zeta H_k+\frac{Q}{\nu}(\frac{u}{\nu})^k \hfill \qquad \text{if } \chi(G^k(x))=0\\
\eta H_k+\frac{Q}{\nu}(\frac{u}{\nu})^k \hfill \qquad \text{if } \chi(G^k(x))=1
\end{cases}$$

Again it suffices to prove $H_k$ is uniformly bounded. We consider the large $k$ such that $k>\frac{2N}{r}$. By the choice of $k$ we have the following observation: $\sum_{i=N}^{k}\chi(F^i(x))>kr-N>\frac{rk}{2}$.

\begin{lemma}
For all $k>\frac{2N}{r}$, we have $H_k\leq M'$, where $M'$ is a constant independent of $x,n,z,k$.

\begin{proof}
Define $a_k=a_k(x,n):=\zeta(1-\chi(F^k(x)))+\eta \chi(F^k(x))$ for $k\geq N$. We have 
\begin{equation}
\begin{aligned}
H_{k+1}=a_kH_k+\frac{Q}{\nu}(\frac{u}{\nu})^k.
\end{aligned}
\end{equation}
By iterating (6.10) on $k$, we can write out $H_k$ explicitly for $k>N$ as follows
\begin{equation}
\begin{aligned}
H_k=(\prod_{i=N}^{k-1}{a_i})M+\frac{Q}{\nu}\sum_{j=N}^{k-1}((\frac{u}{\nu})^{j}\cdot \prod_{s=j+1}^{k-1}a_s). 
\end{aligned}
\end{equation}

Since $a_k\geq \eta$ and $u<\nu\eta$ by our assumption, we have

$$H_k\leq (\prod_{i=N}^{k-1}{a_i})M+\frac{Q}{\nu}(\frac{u}{\nu})^N\cdot (\prod_{s=N+1}^{k-1}a_s)\cdot \sum_{l=0}^{k-N-1}(\frac{u}{\nu} \cdot \frac{1}{\eta})^l \leq M+\frac{Q}{\nu}(\frac{u}{\nu})^N \cdot \frac{1}{\nu}\cdot S$$
where $S:=\sum_{l=0}^{\infty}(\frac{u}{\nu\eta})^l$. We can remove the term $\prod_{i=N+1}^{k-1}{a_i}$ as $\prod_{i=N+1}^{k-1}{a_i}\leq {\zeta}^{1-\frac{r}{2}}{\eta}^{\frac{r}{2}}<1$ for $\sum_{i=N}^{k}\chi(F^i(x))>\frac{rk}{2}$. By writing $M'=M+\frac{Q}{\nu}(\frac{u}{\nu})^N \cdot \frac{1}{\nu}\cdot S$, we get the result.
\end{proof}

\end{lemma}
As $H_k$ is uniformly bounded for $k>\frac{2N}{r}$, we have $H_k$ is uniformly bounded for all $k\geq N$. We know $F_k$ is bounded above by $H_k$ for $k\geq N$ and $M$ otherwise, hence uniformly bounded as well. Since we know $C_k$ is bounded above by $F_k$ from construction, Proposition 6.5 is finally proved, so is the Bowen property for $\varphi^{geo}_G$ on $\mathscr{G}(r)$ for all $0<r<1$.
\end{proof}

\section{Main Theorem}

\subsection{Verification of Theorem 1.1}

Now we have all the ingredients to prove Theorem 1.1. Before we state the proof, let us first briefly summarize on conditions of the parameters of the Katok map. We have $\beta=\frac{2\alpha}{2\alpha+1+\sqrt{4\alpha+1}}$ to be the slope of the invariant cone. To have enough expansion/contraction along the unstable/stable leaves, $\beta$, thus $\alpha$ needs to be sufficiently small. The perturbation also appears in a neighborhood of the origin with radius $r_0$ being small enough, as we require the local product structure at a scale greater than $500\lambda r_0$. In particular, these scales do not depend on the gap $P(\varphi)-\varphi(\underaccent{\bar}{0})$.

Now let us see how to apply Theorem 2.5 to deduce Theorem 1.1. We have the decomposition $(\mathscr{P}(r),\mathscr{G}(r),\mathscr{P}(r))$ for any $r>0$. To apply Theorem 2.5, we need to check all the conditions. Tail specification at scale $\epsilon$ is automatically satisfied for any $0<r<1$ by Proposition 3.6. Conditions for obstructions to expansivity is satisfied at scale $100\epsilon$ by Proposition 3.8. For potential function satisfying $\varphi(\underaccent{\bar}{0})<P(\varphi)$, which is definitely the case here, by Proposition 5.4 and argument in $\mathsection 5.3$, there is some $r'=r'(\varphi)>0$ such that $P(\mathscr{P}(r'),\varphi,\epsilon,100\epsilon)<P(\varphi)$. Finally, Proposition 6.3 gives us the Bowen property at scale $100\epsilon$ for H\"older continuous $\varphi$. Therefore, by taking $(\mathscr{P}(r'),\mathscr{G}(r'),\mathscr{P}(r'))$ to be the orbit decomposition, all the four conditions are verified and we conclude the proof.

\subsection{Verification of Theorem 1.2}

Now let's see how to deduce Theorem 1.2. In this case things are slightly different. Though the maps $G$ and $\widetilde{G}$ have the same dynamics, the geometric-t potentials are not the same function. Therefore, we are not able to fully copy the thermodynamic formalism of $G$ with $t\varphi^{geo}_G$ to derive the one for $\widetilde{G}$ with $t\varphi^{geo}$, where $\varphi^{geo}_G$ and $\varphi^{geo}$ are the geometric potentials associate to $G$ and $\widetilde{G}$.

Recall that $\widetilde{G}=\phi \circ G \circ \phi^{-1}$. Define $\mathscr{P}'(r)=\mathscr{S}'(r):=\{(x,n)\in \mathbb{T}^2 \times \mathbb{N}: (\phi^{-1}(x),n)\in \mathscr{P}(r)\}$ and $\mathscr{G}'(r):=\{(x,n)\in \mathbb{T}^2 \times \mathbb{N}: (\phi^{-1}(x),n)\in \mathscr{G}(r)\}$. By the fact that $\phi$ is identity outside $D_{r_1}$, it is not hard to see that
$$\mathscr{G'}(r)=\{(x,n):\frac{1}{i}S^{\widetilde{G}}_i\chi(x)\geq r \text{ and } \frac{1}{i}S^{\widetilde{G}}_i\chi(\widetilde{G}^{n-i}(x))\geq r \text{ for all }0\leq i \leq n \}$$
and
$$\mathscr{P'}(r)=\mathscr{S'}(r)=\{(x,n)\in \mathbb{T}^2 \times \mathbb{N}: \frac{1}{n}S^{\widetilde{G}}_n\chi(x)<r\}$$
where $S^{\widetilde{G}}_i\chi(x):=\sum_{j=0}^{i-1}\chi(\widetilde{G}^j(x))$.

Again, by repeating the discussion in $\mathsection 3$ and $\mathsection 4$, we know the orbit collections $(\mathscr{P}'(r),\mathscr{G}'(r),\mathscr{P}'(r))$ form an orbit decomposition for $\widetilde{G}$ with specification and expansivity. We also 
notice that the gap condition $t\varphi^{geo}(\underaccent{\bar}{0})<P(t\varphi^{geo};\widetilde{G})$ will provide us with the pressure gap with respect to $\widetilde{G}$. Therefore, to prove Theorem 1.2, we need to show $\varphi^{geo}$ has Bowen property over $\mathscr{r}'$ for any $0<r\leq 1$ and $t\varphi^{geo}(\underaccent{\bar}{0})<P(t\varphi^{geo};\widetilde{G})$ holds for all $t<1$.

We first deduce the regularity of $\varphi^{geo}$ from $\varphi^{geo}_{G}$. Since $\widetilde{G}=\phi \circ G \circ \phi^{-1}$ and $D\phi(E^u(x))=\widetilde{E}^u(\phi(x))$ where $\widetilde{E}^u(x)$ is the unstable distribution of $\widetilde{G}$ at $x$, for all $i\geq 0$ we have
\begin{equation}
    \begin{aligned}
    &\varphi^{geo}(\widetilde{G}^i(x))=-\log\vert D\widetilde{G}\vert_{\widetilde{E}^u(\widetilde{G}^i(x))} \vert =-\log\vert D(\phi \circ G \circ \phi^{-1})\vert_{\widetilde{E}^u(\widetilde{G}^i(x))} \vert \\
    &=-\log\vert D\phi\vert_{D(G\circ \phi^{-1})\widetilde{E}^u(\widetilde{G}^i(x))} \vert
    -\log \vert DG\vert_{D\phi^{-1}\widetilde{E}^u(\widetilde{G}^i(x))}\vert-\log \vert D\phi^{-1}\vert_{\widetilde{E}^u(\widetilde{G}^i(x))}\vert \\
    &=-\log\vert D\phi\vert_{D(G\circ \phi^{-1})\widetilde{E}^u(\widetilde{G}^i(x))} \vert
    -\varphi^{geo}_{G}(G^i(\phi^{-1}(x)))-\log \vert D\phi^{-1}\vert _{\widetilde{E}^u(\widetilde{G}^i(x))}\vert \\
    &=-\log\vert D\phi\vert_{DG(E^u(G^{i}(\phi^{-1}(x))))} \vert
    -\varphi^{geo}_{G}(G^i(\phi^{-1}(x)))-\log \vert D\phi^{-1}\vert _{\widetilde{E}^u(\widetilde{G}^i(x))}\vert \\
    &=-\log\vert D\phi\vert_{E^u(G^{i+1}(\phi^{-1}(x)))} \vert
    -\varphi^{geo}_{G}(G^i(\phi^{-1}(x)))-\log \vert D\phi^{-1}\vert _{\widetilde{E}^u(\widetilde{G}^i(x))}\vert .\\
    \end{aligned}
\end{equation}

We also have the following observation
\begin{equation}
    \begin{aligned}
    &0=-\log \vert D(\phi \circ \phi^{-1})\vert_{\widetilde{E}^u(\widetilde{G}^i(x))} \vert   \\
    &=-\log \vert D\phi \vert_{D\phi^{-1}\widetilde{E}^u(\widetilde{G}^i(x))}\vert 
    -\log \vert D\phi^{-1}\vert_{\widetilde{E}^u(\widetilde{G}^i(x))} \vert \\
    &=-\log \vert D\phi \vert_{E^u(G^i(\phi^{-1}(x)))}\vert 
    -\log \vert D\phi^{-1}\vert_{\widetilde{E}^u(\widetilde{G}^i(x))} \vert .\\
    \end{aligned}
\end{equation}

Therefore, by plugging (7.2) into (7.1), we have
\begin{equation}
    \begin{aligned}
    \varphi^{geo}(\widetilde{G}^i(x))=\log \vert D\phi^{-1}\vert _{\widetilde{E}^u(\widetilde{G}^{i+1}(x))}\vert
    -\varphi^{geo}_{G}(G^i(\phi^{-1}(x)))-\log \vert D\phi^{-1}\vert _{\widetilde{E}^u(\widetilde{G}^i(x))}\vert 
    \end{aligned}
\end{equation}
Now fix any $r\in (0,1]$. Given $(x,n)\in \mathscr{G}'(r)$ and $y$ such that $d(\widetilde{G}^i(x),\widetilde{G}^i(y))<\frac{100C\epsilon}{\kappa_0}$ for all $0\leq i\leq n-1$, where $\kappa_0$ is the normalizing constant in the definition of function $\phi$, $\kappa_0>1$ and $C=C(\alpha,r_0)$ is an expansion constant (see page 7). With the help of (7.3), we have
\begin{equation}
    \begin{aligned}
    &S_n^{\widetilde{G}}\varphi^{geo}(x)-S_n^{\widetilde{G}}\varphi^{geo}(y)
    =\sum_{i=0}^{n-1} (\varphi^{geo}(\widetilde{G}^i(x))- \varphi^{geo}(\widetilde{G}^i(y))) \\
    &=\sum_{i=0}^{n-1}(\log \vert D\phi^{-1}\vert _{\widetilde{E}^u(\widetilde{G}^{i+1}(x))}\vert
    -\varphi^{geo}_{G}(G^i(\phi^{-1}(x)))-\log \vert D\phi^{-1}\vert _{\widetilde{E}^u(\widetilde{G}^i(x))}\vert \\
    &-(\log \vert D\phi^{-1}\vert _{\widetilde{E}^u(\widetilde{G}^{i+1}(y))}\vert
    -\varphi^{geo}_{G}(G^i(\phi^{-1}(y)))-\log \vert D\phi^{-1}\vert _{\widetilde{E}^u(\widetilde{G}^i(y))}\vert )) \\
    &=\log \vert D\phi^{-1}\vert _{\widetilde{E}^u(\widetilde{G}^{n}(x))}\vert-\log \vert D\phi^{-1}\vert _{\widetilde{E}^u(\widetilde{G}^{n}(y))}\vert
    -\log \vert D\phi^{-1}\vert _{\widetilde{E}^u(x)}\vert
    +\log \vert D\phi^{-1}\vert _{\widetilde{E}^u(y)}\vert \\
    &+\sum_{i=0}^{n-1}(\varphi^{geo}_{G}(G^i(\phi^{-1}(y)))-\varphi^{geo}_{G}(G^i(\phi^{-1}(x)))).
    \end{aligned}
\end{equation}

Now we look at the last line of (7.4). Since we choose $(x,n)$ from $\mathscr{G}'(r)$, we know in particular that both $x$ and $\widetilde{G}^n(x)$ belong to $\mathbb{T}^2\setminus D_{100\gamma\epsilon+r_1}$. By definition of $y$, we know both $y$ and $\widetilde{G}^n(y)$ belong to $\mathbb{T}^2\setminus D_{r_1}$. Therefore, we know $\log \vert D\phi^{-1}\vert _{\widetilde{E}^u(\widetilde{G}^{n}(x))}\vert-\log \vert D\phi^{-1}\vert _{\widetilde{E}^u(\widetilde{G}^{n}(y))}\vert
-\log \vert D\phi^{-1}\vert _{\widetilde{E}^u(x)}\vert
+\log \vert D\phi^{-1}\vert _{\widetilde{E}^u(y)}\vert=0$ as $\phi^{-1}$ is identity in $\mathbb{T}^2\setminus D_{r_1}$. So to get the Bowen property of $\varphi^{geo}$, we only need to check if the remainder $\sum_{i=0}^{n-1}(\varphi^{geo}_{G}(G^i(\phi^{-1}(y)))-\varphi^{geo}_{G}(G^i(\phi^{-1}(x))))$ is bounded.

We know from definition that $(\phi^{-1}(x),n)\in \mathscr{G}(r)$. Therefore, to prove the result above, it suffices to show that $d(G^i(\phi^{-1}(x)),G^i(\phi^{-1}(y)))<100\epsilon$ because once this is proved, Proposition 6.4 will be immediately applicable. Notice that $G^i(\phi^{-1}(x))=\phi^{-1}\widetilde{G}^i(x)$, so $d(G^i(\phi^{-1}(x)),G^i(\phi^{-1}(y)))=d(\phi^{-1}\widetilde{G}^i(x),\phi^{-1}\widetilde{G}^i(y))$. By (3.1), we have $d(\phi^{-1}\widetilde{G}^i(x),\phi^{-1}\widetilde{G}^i(y))\leq \frac{\kappa_0}{C} d(\widetilde{G}^i(x),\widetilde{G}^i(y))<\frac{100C\kappa_0\epsilon}{C\kappa_0}=100\epsilon$. As a conclusion, we obtain the Bowen property of $\varphi^{geo}$ for $\widetilde{G}$ on $\mathscr{G}'(r)$ for any $0<r\leq 1$ at scale $\frac{100\epsilon}{\kappa_0}$ (the constant variation term in the Bowen property can differ in different $r$).

Now we verify that $t\varphi^{geo}(\underaccent{\bar}{0})<P(t\varphi^{geo};\widetilde{G})$ holds for all $t<1$. By Proposition 3.1(4), the lebesgue measure $m$ is preserved and ergodic under $\widetilde{G}$. Since the stable and unstable leaves are contained in cones with small angle, m has absolutely continuous conditional measure on unstable manifold. Since Lyapunov exponents of $m$ for $\widetilde{G}$ is nonzero by Proposition 3.1(3), $m$ is an SRB measure for $\widetilde{G}$. Therefore we have by \cite{LY85} 
$$h_m(\widetilde{G})=\lambda^+(m)=-\int{\varphi^{geo}dm},$$
where $\lambda^+$ refers to the positive Lyapunov exponent with respect to $m$. 

Since $-\int{\varphi^{geo}dm}>0$, we have
$$P(t\varphi^{geo};\widetilde{G})\geq P(t\varphi^{geo},m;\widetilde{G})=h_m(\widetilde{G})+t\int{\varphi^{geo}dm}=(1-t)\int{\varphi^{geo}dm}>0.$$

Therefore, if $t<1$, $P(t\varphi^{geo};\widetilde{G})>0=P(t\varphi^{geo},\delta_0)$. This conclude the proof of Theorem 1.2.

Further statistical properties of the Katok map are explored in \cite{PSZ16}, including exponential decay of correlations and central limit theorem for the unique equilibrium state. These are benefits brought by the inducing scheme technique applied there. Nevertheless, the uniqueness consequence on the equilibrium states for geometric t-potential are not as strong there. For a fixed $\widetilde{G}$, decreasing $t$ over a limit $t_0$ will destroy the positive recurrence of the normalized potential in the base. In this case, nothing can be said in terms of the uniqueness of equilibrium states. To fix this, the authors need to consistently narrow down the perturbed radius to make $t_0$ approach $-\infty$. See (P4) and Theorem 4.6 in \cite{PSZ17} for details.

Despite of the difference in conclusions, for geometric t-potentials, there are certain similarities regarding the spirit of two approaches. In \cite{PSZ16}, For $t_0<t<1$, there is an equilibrium state being unique among the measures lifted from the base of the inducing scheme and supported on the whole tower. In the case of the Katok map, the inducing time is simply the first recurrence to the base and the base is chosen to be an element in Markov partition induced by the original linear automorphism that is far away from perturbed region. By topological transitivity, the non-liftable measures have to distribute zero measures to each of these partition elements, which makes $\delta_0$ the only candidate. Pressure gap between $P(\varphi_t)$ and 0 will guarantee the equilibrium measure is chosen from the liftable measures, thus being unique. In our case, we prove the potential over orbit segments that spend enough time far away from the perturbed region are highly regular and strengthen the pressure gap result to all $t<1$ as our result is independent of the choice of Markov diagram.

\section{Global Weak Gibbs Property for equilibrium state} 

We exhibit a global weak Gibbs property for the unique equilibium state of potential functions in Theorem 1.1 and 1.2. For a continuous function $\varphi: X\rightarrow \mathbb{R}$, $\delta>0$ and $\mathscr{C}\subset X\times \mathbb{N}$, we say an invariant measure $\mu$ has Gibbs property at scale $\delta$ over $\mathscr{C}$ if there exists an $Q=Q(\delta,\mathscr{C})>1$ such that for every $(x,n)\in \mathscr{C}$, we have
$$Q^{-1}e^{-nP(\varphi)+S_n\varphi(x)}\leq \mu(B_n(x,\delta)) \leq Qe^{-nP(\varphi)+S_n\varphi(x)}.$$

If only the left (right) inequality holds, we say $\mu$ has lower (upper) Gibbs property at scale $\delta$ over $\mathscr{C}$.

For the unique equilibrium state of orbit decomposition satisfying all assumptions in Theorem 2.5, in \cite{CT16}, the authors deduce a version of upper Gibbs property in terms of two-scale estimate over $X\times \mathbb{N}$ and lower Gibbs property over $\mathscr{G}^M$. In Katok map, since all orbit segments have specification at any scales, it is possible to prove a weak lower Gibbs property on $X\times \mathbb{N}$. 

We fix the potential function $\varphi$ to be any potential satisfying the condition of Theorem 1.1 or 1.2 (geometric-t potential with $t<1$ or H\"older continuous potential with $P(\varphi)-\varphi(\underaccent{\bar}{0})$) and $\mu$ to be the respective unique equilibrium state. We just discuss on $G$ as all the properties can be directly referenced from earlier results in the paper and $\widetilde{G}$ share all those properties according to $\mathsection 7$. We also fix an appropriate $r>0$ such that $(\mathscr{P}(r), \mathscr{G}(r), \mathscr{P}(r))$ is the desired orbit decomposition for $\varphi$. Recall the process of constructing the equilibrium measure $\mu$ is as follows. For each $n\in \mathbb{N}$, let $E_n\subset X$ be a maximizing $(n,5\epsilon)$-separated set for $\Lambda(X,n,5\epsilon)$, where $\epsilon$ is the same as before. Consider the measures
$$\nu_n:=\frac{\sum_{x\in E_n}e^{S_n\varphi(x)}\delta_x}{\sum_{x\in E_n}e^{S_n\varphi(x)}},$$
$$\mu_n:=\frac{1}{n}\sum_{i=0}^{n-1}(G^i)_{*}\nu_n.$$

By the second part of the proof of variational principle in \cite{Wal82} and the fact that $\epsilon$ is much smaller than the expansive constant for $G$, we have any weak* limit of $\{\mu_n\}$ to be an equilibrium state. By uniqueness of the equilibrium state, we know $\mu_n$ converges in weak* topology. See Lemma 4.14 and 6.12 in \cite{CT16}.
\subsection{Global Weak Lower Gibbs Property}

We have the following weak version of lower Gibbs property for $\mu$ that applies to all orbits with the Gibbs constant decaying subexponentially. 

\begin{proposition}
There is $Q=Q(\epsilon)>0$ such that for every $(x,n)\in X\times \mathbb{N}$, we have
$$\mu(B_n(x,6\epsilon))\geq Qe^{-\zeta(n)}e^{-nP(\varphi)+S_n\varphi(x)}$$
\end{proposition}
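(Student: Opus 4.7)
My plan is to combine the weak-$*$ construction $\mu=\lim_k\mu_k$ with global specification (Proposition~3.6) and, crucially, with the Bowen property of $\varphi$ on the good collection $\mathscr{G}(r)$ (Propositions~6.3 and~6.4). By the Portmanteau theorem applied to $\overline{B_n(x,5\epsilon)}\subset B_n(x,6\epsilon)$,
\[
\mu(B_n(x,6\epsilon)) \;\ge\; \limsup_{k\to\infty}\frac{1}{k}\sum_{i=0}^{k-1}\nu_k\bigl(G^{-i}B_n(x,5\epsilon)\bigr),
\]
so the task reduces to producing a uniform lower bound on $\nu_k(G^{-i}B_n(x,5\epsilon))$ over a bulk sub-range of $i$.

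Let $\tau=\tau(\epsilon)$ be the specification transition time. For $i\in[\tau,k-n-\tau]$ I select weight-maximising $(i-\tau,5\epsilon)$- and $(k-n-i-\tau,5\epsilon)$-separated sets $F_1,F_2$ drawn from $\mathscr{G}(r)$, and apply global specification to the triple $(y_1,i-\tau),(x,n),(y_2,k-n-i-\tau)$ for each $(y_1,y_2)\in F_1\times F_2$. This produces a point $z=z(y_1,y_2)$ with $d_{i-\tau}(z,y_1)<\epsilon$, $d_n(G^iz,x)<\epsilon$ and $d_{k-n-i-\tau}(G^{i+n+\tau}z,y_2)<\epsilon$. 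Distinct pairs give $(k,3\epsilon)$-separated outputs, which by a bounded-multiplicity covering argument are matched injectively (up to a uniform constant) with elements $\tilde z\in E_k$ lying in $G^{-i}B_n(x,5\epsilon)$ after a harmless scale adjustment. The key weight estimate uses the Bowen property on $\mathscr{G}(r)$ to bound the variation along the two outer segments by a single finite constant $K$, while the definition of $\zeta$ at scale $100\epsilon$ controls the centre by $\zeta(n)$:
\[
S_k\varphi(\tilde z) \;\ge\; S_{i-\tau}\varphi(y_1)+S_n\varphi(x)+S_{k-n-i-\tau}\varphi(y_2)-\zeta(n)-2K-2\tau\|\varphi\|_\infty.
\]
Crucially, no $\zeta(i-\tau)$ or $\zeta(k-n-i-\tau)$ term appears — this is the decisive feature of drawing the side sets from $\mathscr{G}(r)$.

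Summing over $F_1\times F_2$ and dividing by $\Lambda_k^{\mathrm{sep}}(\varphi,5\epsilon)$, I combine (i) the pressure-gap $P(\mathscr{P}\cup\mathscr{S},\varphi)<P(\varphi)$ from Proposition~5.4, which ensures that $\Lambda_m^{\mathscr{G},\mathrm{sep}}(\varphi,5\epsilon)$ accounts for all but an exponentially small fraction of $\Lambda_m^{\mathrm{sep}}(\varphi,5\epsilon)$, with (ii) the two-sided comparability $\Lambda_m^{\mathrm{sep}}(\varphi,5\epsilon)\asymp e^{mP(\varphi)}$ (valid uniformly in $m$ thanks to expansivity at scale $100\epsilon$, global specification, and Bowen on the dominating good collection). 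These combine to yield
\[
\nu_k\bigl(G^{-i}B_n(x,5\epsilon)\bigr)\;\ge\; c\,e^{-\zeta(n)+S_n\varphi(x)-nP(\varphi)}
\]
with $c>0$ independent of $i,k,x,n$ once all arguments are sufficiently large. Averaging over $i\in[\tau,k-n-\tau]$, whose density in $[0,k-1]$ tends to $1$, and taking $\limsup_{k\to\infty}$, the proposition follows with $Q=c$.

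\textbf{Main obstacle.} The pivotal step is the choice of $F_1,F_2$ inside $\mathscr{G}(r)$: it is the Bowen property on the good collection that replaces what would otherwise be the sub-linear but unbounded penalties $\zeta(i-\tau)+\zeta(k-n-i-\tau)$ (which would collapse the Cesaro average in the $\limsup_k$, since $e^{-\zeta(\ell)}\to 0$ despite $\zeta(\ell)=o(\ell)$) by a single finite constant $2K$. The one technical point meriting explicit verification is the uniform comparability $\Lambda_m^{\mathrm{sep}}(\varphi,5\epsilon)\asymp e^{mP(\varphi)}$ at the fixed scale $5\epsilon$: it follows in the standard way from global specification, expansivity at scale $100\epsilon$, Bowen on $\mathscr{G}(r)$ and the pressure-gap estimate of Proposition~5.4, but needs to be stated cleanly before it is invoked.
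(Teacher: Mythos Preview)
Your proposal is correct and follows essentially the same route as the paper's proof: both arguments estimate $\nu_k(G^{-i}B_n(x,\cdot))$ by gluing, via global specification, side segments drawn from $\mathscr{G}(r)$ to the central segment $(x,n)$, so that the Bowen property replaces the would-be penalties $\zeta(i-\tau)+\zeta(k-n-i-\tau)$ by a fixed constant $2K$, leaving only $\zeta(n)$ from the centre; both then invoke the two-sided comparability of partition sums with $e^{mP(\varphi)}$ (lower bound on $\mathscr{G}$, upper bound on the full space) to conclude. The only cosmetic difference is that the paper takes its side sets $(m,12\epsilon)$-separated so that the map into $E_k$ is genuinely injective, whereas you take them $(m,5\epsilon)$-separated and rely on a bounded-multiplicity argument; either choice works.
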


\begin{proof}
For any $(x,n)\in X\times \mathbb{N}$, we estimate $\mu(B_n(x,6\epsilon))$ using $\nu_s(G^{-k}(B_n(x,6\epsilon)))$ with $s\gg n$, $k\gg n$ and $s-k \gg n$. The technique is similar to the one in Lemma 4.16 \cite{CT16}, and here we do estimates over all orbit segments in the homeomorphism case using global specification. By Proposition 4.10 in \cite{CT16}, there is $T,L>0$ such that 
\begin{equation}
\Lambda(\mathscr{G},12\epsilon,m)>e^{-L}e^{mP(\varphi)}.    \end{equation}
for all $m\geq T$. Then for every $m\geq T$ we can find an $(m,12\epsilon)$-separated set $E_m'\subset \mathscr{G}_m$ such that
\begin{equation}
\sum_{x\in E_m'}e^{S_n\varphi(x)}\geq e^{-L}e^{mP(\varphi)}
\end{equation}

To estimate $\nu_s(G^{-k}(B_n(x,6\epsilon)))$, we use the specification of $G$ at scale $\epsilon$. Suppose the transition time $\tau=\tau(\epsilon)$. We fix $s$ and $k$. Without loss of generality we assume $k \gg T+\tau$ and $s-k-n \gg T+\tau$. We construct a map $\pi: E_{k-\tau}'\times E_{s-k-n-\tau}'\rightarrow E_s$ as follows. 

For $u=(u_1,u_2)\in E_{k-\tau}'\times E_{s-k-n-\tau}'$, by specification at scale $\epsilon$, there is a $y=y(u)$ such that $y\in B_{k-\tau}(u_1,\epsilon)$, $G^{k}(y)\in B_{n}(x,\epsilon)$ and $G^{k+n+\tau}(y)\in B_{s-k-n-\tau}(u_2,\epsilon)$. By definition of $E_s$, we can define $\pi(u)\in E_s$ such that $d_s(\pi(u),y(u))<5\epsilon$. Since $E'_{k-\tau}$ and $E'_{s-k-n-\tau}$ are $(k-\tau,12\epsilon)$-separated and $(s-k-n-\tau,12\epsilon)$-separated respectively, if $u'\neq u''$ for some $u'=(u_1',u_2')$, $u''=(u_1'',u_2'')$ that both belong to $E_{k-\tau}'\times E_{s-k-n-\tau}'$, we have $d_s(\pi(u'),\pi(u''))>12\epsilon-2(5\epsilon+\epsilon)=0$. Therefore $\pi$ is injective and by definition we have $\pi(u)\in G^{-k}(B_n(x,6\epsilon))$. By applying Bowen property for $G$ with $\varphi$ over $\mathscr{G}$ at scale $100\epsilon$, we have
\begin{equation}
    \Phi_0(\pi(u),s)-\Phi_0(u_1,k-\tau)-\Phi_0(x,n)-\Phi_0(u_2,s-k-n-\tau)\geq -4{\tau}\vert \varphi \vert-2K-\zeta(n).
\end{equation}
where $\Phi_0(x,n):=S_n\varphi(x)$, $\vert \varphi \vert:=\sup\{\vert \varphi(x) \vert:x\in \mathbb{T}^2\}$, $K$ is the constant in Bowen property and $\zeta$ is the variation term as in Definition 5.5. 

We estimate $\nu_s(G^{-k}(B_n(x,6\epsilon)))$ from below. By Lemma 4.11 in \cite{CT16}, since $\varphi$ has Bowen property over $\mathscr{G}(r)$ at scale $100\epsilon$ and $P(\mathscr{P}(r),\varphi,\epsilon,100\epsilon)<P(\varphi)$, there is a constant $C>0$ independent of $s$ such that $\sum_{z\in E_s}e^{\Phi_0(z,s)}\leq Ce^{sP(\varphi)}$. We have
$$
\begin{aligned}
&\nu_s(G^{-k}(B_n(x,6\epsilon))) 
\geq C^{-1}e^{-sP(\varphi)}\sum_{u\in E_{k-\tau}'\times E_{s-k-n-\tau}'}e^{\Phi_0(\pi(u),s)}
\\
&\geq C^{-1}e^{-sP(\varphi)}e^{-\zeta(n)-4{\tau}\vert \varphi \vert-2K}(\sum_{u_1\in E_{k-\tau}'}e^{\Phi_0({u_1,k-\tau})})(\sum_{u_2\in E_{s-k-n-\tau}'}e^{\Phi_0({u_2,s-k-n-\tau})})
e^{\Phi_0(x,n)}\\
&\geq C^{-1}e^{-sP(\varphi)}e^{-\zeta(n)-4{\tau}\vert \varphi \vert-2K}(e^{-L}e^{(k-\tau)P(\varphi)})(e^{-L}e^{(s-k-n-\tau)P(\varphi)})e^{\Phi_0(x,n)}\\
&=(C^{-1}e^{-2K}e^{-2L}e^{-4{\tau}\vert \varphi \vert}e^{-2\tau P(\varphi)})(e^{-\zeta(n)}e^{-nP(\varphi)+\Phi_0(x,n)})\\
&=C_1e^{-\zeta(n)}e^{-nP(\varphi)+\Phi_0(x,n)}
\end{aligned}
$$
The first inequality follows from the fact that the map $\pi$ is injective as well as $\sum_{z\in E_s}e^{\Phi_0(z,s)}\leq Ce^{sP(\varphi)}$. The second inequality follows from (8.3). The third inequality follows from (8.2). In the last equality the constant $C_1$ is just an rewriting of $C^{-1}e^{-2K}e^{-2L}e^{-4{\tau}\vert \varphi \vert}e^{-2\tau P(\varphi)}$ and we can see $C_1$ is only dependent on $\epsilon$, in particular, independent of $s$ or $k$. Therefore, by summing over $k$, we have
$$\mu_s(G^{-k}(B_n(x,6\epsilon)))=\frac{1}{s}\sum_{i=0}^{s-1}((G^i)_{*}\nu_{s})(B_n(x,6\epsilon)) \geq C_1e^{-\zeta(n)}e^{-nP(\varphi)+\Phi_0(x,n)}.$$
which leads to the statement of the proposition thus completes the proof.
\end{proof}

We observe that the $\epsilon>0$ used throughout the paper could be made arbitrarily small and Proposition 8.1 holds at all scales with different $Q$. Together with the fact that $\lim_{n\rightarrow \infty}{\frac{\zeta(n)}{n}}=0$, we have
\begin{equation}
\lim_{\epsilon \rightarrow 0}\liminf_{n \rightarrow \infty}\inf_{x\in \mathbb{T}^2}(\frac{1}{n}\log(\mu(B_n(x,\epsilon)))+\int{(P(\varphi)-\varphi})d{\delta_{x,n}})\geq 0.
\end{equation}
where $\delta_{x,n}=\frac{1}{n}\sum_{i=0}^{n-1}\delta_{G^i(x)}$.

Since $\varphi$ is continuous, (8.4) gives the definition of $P(\varphi)-\varphi$ being an lower-energy function for $\mu$ in \cite{PS05}, Definition 3.2. The existence of lower-energy function for $\mu$ is crucial in deriving the lower large deviation principle for $\mu$. We will give detailed definitions and explanations in $\mathsection 8.4$. 

\subsection{Upper Gibbs Property}

Proposition 4.21 in \cite{CT16} estimates the upper Gibbs property for $\mu$ over $X \times \mathbb{N}$ in terms of $\Phi_{6\epsilon}$ (see the definition at the end of $\mathsection 2.1$). It says there exists $Q'=Q'(\epsilon)$ such that for every $(x,n)\in X\times \mathbb{N}$, we have 
\begin{equation}
\mu(B_n(x,6\epsilon))\leq Q'e^{-nP(\varphi)+\Phi_{6\epsilon}(x,n)}.    
\end{equation}

By definition of $\zeta(n)$, we have $\Phi_{6\epsilon}(x,n)\leq \Phi_{0}(x,n)+\zeta(n)$, thus from (8.5) we have
\begin{equation}
\mu(B_n(x,6\epsilon)) \leq Q'e^{-nP(\varphi)+\Phi_{0}(x,n)+\zeta(n)}.  
\end{equation}

Similar to (8.4), from (8.6) and the fact that $\lim_{n\rightarrow \infty}\frac{\zeta(n)}{n}=0$ we have
\begin{equation}
\lim_{\epsilon \rightarrow 0}\limsup_{n \rightarrow \infty}\sup_{x\in \mathbb{T}^2}(\frac{1}{n}\log(\mu(B_n(x,\epsilon))+\int{(P(\varphi)-\varphi})d{\delta_{x,n}})\leq 0.
\end{equation}

The inequality (8.7) and continuity of $\varphi$ shows $P(\varphi)-\varphi$ is an upper-energy function for $\mu$ according to Definition 3.4 in \cite{PS05}. Similar to the case of lower-energy function, it plays an essential role in deriving the upper large deviation principle for $\mu$. We will clarify all the details in $\mathsection 8.4$ as well.

\subsection{Entropy Density}

We say $(X,f)$ has the property of entropy density (of ergodic measures) if for any invariant measure $\mu$ and any $\eta>0$, there is an ergodic measure $\nu$ such that $D(\mu,\nu)<\eta$ and $\vert h_{\mu}(f)-h_{\nu}(f) \vert <\eta$, where $D$ is a metric over the space of measures on $X$ compatible with the weak* topology. 

In our case, there are several approaches to give rise to the entropy density property of $(\mathbb{T}^2,\widetilde{G})$. First from Proposition 3.1(1) we know $\widetilde{G}$ is homeomorphically conjugate to $f_A$, which is a transitive Anosov diffeomorphism. From classic results we know that $f_A$ has the entropy density property, which immediately implies the desired result on $\widetilde{G}$.

Here we point out many examples of non-uniformly hyperbolic diffeomorphisms are not conjugate to transitive Anosov systems. Therefore, we will also sketch a proof that uses Gorodetski and Pesin's results from \cite{GP17}, which relies on the properties of hyperbolic periodic orbits and is potentially more applicable to other non-uniformly hyperbolic settings. 

In \cite{GP17}, the authors define hyperbolic periodic points $p,q\in X$ to be homoclinically related if the stable manifold of the orbit of $p$ intersects transversely with the unstable manifold of the orbit of $q$ and vice versa. Denote by $\mathscr{H}(p)$ the closure of the set of all hyperbolic periodic points homoclinically related to $p$ and $s(p)$ the topological dimension of the stable manifold of $p$. Two assumptions concerning $\mathscr{H}(p)$ are added to $(X,f)$, if for any hyperbolic periodic point $p$:

(H1) For any hyperbolic periodic point $q\in \mathscr{H}(p)$ with $s(q)=s(p)$, q and p are homoclinically related.

(H2) $\mathscr{H}(p)$ is isolated. This means that there is an open neighborhood $U(\mathscr{H}(p))$ of $\mathscr{H}(p)$ such that $\mathscr{H}(p)=\bigcap_{n\in \mathbb{Z}}{f^n(U(\mathscr{H}(p)))}$. 

\noindent Then the authors conclude that $\mathscr{M}_p^e$ is entropy dense in $\mathscr{M}_p$, where $\mathscr{M}_p$ is the set of all invariant hyperbolic measures supported on $\mathscr{H}(p)$ for which the number of negative Lyapunov exponents at almost every point is exactly $s(p)$ and $\mathscr{M}_p^e\subset \mathscr{M}_p$ is the set of ergodic ones.

In the Katok map, all the periodic points not equal to origin are hyperbolic. Moreover, by Lemma 3.3, every pair of hyperbolic periodic points $(p,q)$ are homoclinically related with $s(p)=s(q)=1$. By Proposition 3.1(1), hyperbolic periodic points are dense for $f_A$ in $\mathbb{T}^2$, thus dense for $\widetilde{G}$ in $\mathbb{T}^2$. Therefore $\mathscr{H}(p)=\mathbb{T}^2$ for all hyperbolic periodic $p$ and thus both (H1) and (H2) hold in the case of the Katok map. Therefore, by applying the above result, the set of all hyperbolic ergodic measures is entropy dense in the set of all hyperbolic invariant measures. 

To prove the entropy density for $\widetilde{G}$, by Proposition 3.1(3), it suffices to prove a linear combination of $\delta_0$ and any invariant hyperbolic measure can be approximated in distance and entropy by ergodic ones. Choose any invariant hyperbolic measure $\nu$ and $0<a<1$ and consider $\nu_a:=a\delta_0+(1-a)\nu$. 

According to the differential system that generates $G$, we have the following observation:
$$\frac{d(s_1s_2)}{dt}=-s_1s_2\psi(s_1^2+s_2^2)\log\lambda+s_2s_1\psi(s_1^2+s_2^2)\log\lambda=0.$$
That is to say, when the orbit stay in the single local chart, $s_1s_2$ is a constant. Moreover, $s_1(t)$ is non-decreasing in $D_{r_1}$ and strictly increasing except for $W^s_{r_1}(0)$, with $s_2((t)$ being non-increasing in $D_{r_1}$ and strictly decreasing except for $W^u_{r_1}(0)$. Given any $x\in D_{r_1}$ with eigen-coordinate being $x_1,x_2$ and $x_1\neq 0$ (otherwise $x$ will converge to origin), we know locally the orbit of $x$ will be on $s_1s_2=\rho$. By evaluating on $\frac{ds_1}{dt}=s_1\psi(s_1^2+s_2^2)\log\lambda \geq s_1\psi(2s_1s_2)\log\lambda=s_1\psi(2\rho)\log\lambda$, we get an upper bound $T(\rho)\approx \frac{r_1^2}{\rho {\lambda}^{\psi(2\rho)}}$ for the time that the orbit of $x$ will spend in $D_{r_1}$ before moving out. Since $G=f_A$ outside, after $\{G^n(x)\}$ reaches $\mathbb{T}^2 \setminus D_{r_1}$, it will stay there for at least $\frac{1}{r_1\lambda}$ times. After that, the orbit will wrap around the torus and $s_1s_2$ will change.

For any $n\in \mathbb{N}$ and $\delta>0$, by choosing $s_1s_2$ small, we are able to find an orbit segment with length $n$ that stays close to the origin within distance $\delta$. By applying specification at a fixed scale $\delta' \ll r_0$ and make $\delta \ll \delta'$ and $n\rightarrow \infty$, we are able to get a sequence of periodic point $\{p_n\}_{n\geq 1}$ such that for each $n$, $p_n$ spends more than $\frac{n^2-1}{n^2}q_n$ times in a neighborhood $U_n$ of origin whose diameter is less than $\frac{\delta'}{n}$, where $q_n$ is the period of $p_n$. The same result applies to $\widetilde{G}$. 

From definition of $p_n$ we have $D(\delta_0,\delta_{p_n})\rightarrow 0$ when $n\rightarrow \infty$, where $\delta_{p_n}$ is the periodic measure supported on $p_n$. By shrinking $\delta'$ if necessary, without loss of generality we assume $D(\delta_0,\delta_{p_n})<\frac{1}{n}$. Consider $\mu_{n,a}:=a\delta_{p_n}+(1-a)\nu$. Recall $\nu$ is a fixed invariant hyperbolic measure. For each $n>0$, $\mu_{n,a}$ is invariant and has zero measure at origin, thus hyperbolic. Therefore, there is an ergodic $\nu_{n,a}$ such that $D(\nu_{n,a},\mu_{n,a})<\frac{1}{n}$ and $\vert h_{\mu_{n,a}}(\widetilde{G})-h_{\nu_{n,a}}(\widetilde{G}) \vert<\frac{1}{n}$. Since $D(\nu_{a},\mu_{n,a}) < \frac{a}{n}$ and $h_{\nu_{a}}(\widetilde{G})=h_{(1-a)\nu}(\widetilde{G})=h_{\mu_{n,a}}(\widetilde{G})$, we have
$D(\nu_a,\nu_{n,a})<\frac{2}{n}$ and $\vert h_{\nu_{a}}(\widetilde{G})-h_{\nu_{n,a}}(\widetilde{G}) \vert<\frac{1}{n}$, which gives us the desired results for entropy density.

\subsection{Large Deviations Principle}

In this section we combine the results in $\mathsection 8$ to deduce the large deviations principle of $\mu$. Large deviations principle describes the exponential decay of the measure of points whose space average differs from the time average by a certain distance. In terms of estimating from below or above, we have the definition for upper and lower large deviations principle.

\begin{definition}
Let $\mu$ be the equilibrium state for potential $\varphi$. We say that $\mu$ has upper large deviations principle if for any continuous $\widetilde{f}:\mathbb{T}^2\rightarrow \mathbb{R}$ and any $\delta>0$, we have
$$\limsup_{n\rightarrow \infty}\frac{1}{n}\log\mu\left\{x:\vert \frac{S_n\widetilde{f}(x)}{n}-\int \widetilde{f} d\mu \vert\geq \delta \right\}\leq -q(\delta),$$
where $q(\delta)$ is the rate function given by
$$q(\delta):=P(\varphi)-\sup\left\{h_{\nu}(\widetilde{G})+\int \varphi d\nu: {\nu\in \mathscr{M}_{\widetilde{G}}(\mathbb{T}^2)}, \quad \vert \int \widetilde{f} d\mu-\int \widetilde{f} d\nu \vert \geq \delta\right\}.$$
or $q(\delta)=\infty$ when there is no such measure $\nu$. 

\end{definition}

Similarly, the lower large deviations principle holds when we have a liminf in place of limsup, $> \delta$ in place of $\geq \delta$ and $\geq$ in place of $\leq$ for the whole inequality. If both lower and upper large deviations hold for a fixed $\widetilde{f}$, the statement above is known as level-1 large deviations priciple. If they hold for all $\widetilde{f}$, the statement above is equivalent to level-2 large deviations principle.

The traditional definition (see for example Definition 5 in \cite{You90}) of large deviations principle requires that $q(\delta)$ should be lower semi-continuous. Here it is true since the entropy map is upper semi-continuous for expansive map and $\widetilde{G}$ is continuous.

Now let us prove Theorem 1.3. We continue to use the same notation of $\varphi$ and $\mu$ as in the beginning of this section. It suffices to prove the lower and upper large deviations principle for $\mu$. In $\mathsection 8.1$ we obtain a weak version of lower Gibbs property for $\mu$. In particular, in the end we show that $P(\varphi)-\varphi$ is a lower-energy function for $\mu$. In $\mathsection 8.3$ we prove the entropy density of ergodic measures. By applying Theorem 3.1 in \cite{PS05}, we get the lower large deviations principle for $\mu$.

In $\mathsection 8.2$ we have the respective weak version of upper Gibbs property for $\mu$, which leads to $P(\varphi)-\varphi$ being an upper-energy function for $\mu$. As entropy map is upper semi-continuous, by Theorem 3.2 in \cite{PS05}, we have the upper large deviations principle for $\mu$.

\section{Multifractal spectra}

We now carry out multifractal analysis on the Katok map $\widetilde{G}$ with potential function $\varphi_t:=t\varphi^{geo}$ by studying the level sets of Lyapunov exponents. Multifractal analysis measures the size of the set with the same given local asymptotic quantity associated to the dynamical system. In dynamics, the size usually refers to the Hausdorff dimension entropy or pressure. In our case, we take the local asymptotic quantities to be Birkhoff average and estimate the level set in terms of entropy and dimension. The goal of this section is to prove Theorem 1.4.

\subsection{General background and outline of the proof}

We begin with a few definitions (see also \cite{Cli10}). The non-negative (forward) Lyapunov exponents at all Lyapunov regular points $x$ is the (forward) Birkhoff average of $-\varphi^{geo}$:

$$\chi^{+}(x)=\lim_{n\rightarrow \infty}\log\frac{\Vert D\widetilde{G}^n|_{E^u(x)} \Vert}{n}=\lim_{n\rightarrow \infty}\frac{S_n(-\varphi^{geo})(x)}{n}.$$

Similarly we can define $\chi^{-}(x)$ by simply making $n$ in the definition of $\chi^{+}(x)$ go to $-\infty$. In the two dimensional case, if $\chi^{+}(x)=\chi^{-}(x)$, we say that the point $x$ is Lyapunov regular. To study the level sets of non-negative Lyapunov exponents we give the following natural definition:

$$L(\beta):=\Big\{ x\in \mathbb{T}^2: x \text{ is Lyapunov regular and }\chi^{+}(x)=\beta \Big\}.$$

We also define $\mathscr{P}(t):=P(\varphi_t)$ and $\mathscr{E}(\alpha):=\inf_{t\in \mathbb{R}}(\mathscr{P}(t)-t\alpha)$, which is the Legendre transform of $\mathscr{P}$. We know $\mathscr{P}$ is convex, so it has left and right derivative $D^{-}\mathscr{P}(t)$, $D^{+}\mathscr{P}(t)$ at each $t\in \mathbb{R}$. 

We concentrate on the Hausdorff dimension and topological entropy of $L(-\alpha)$, denoted by $d_H(-\alpha)$ and $h(-\alpha)$. Note that $-\alpha$ is the value of the non-negative Lyapunov exponent, thus all the $\alpha$ that appear in this section are non-positive (Also notice that this is not the same $\alpha$ as in $\mathsection 3-\mathsection 7$). We also have different use of $h$ in this section. For $E\subset \mathbb{T}^2$, $h(E)$ means its topological entropy in the sense of Bowen\cite{Bow73}. For $\mu\in \mathcal{M}(\widetilde{G})$, $h(\mu)$ represents its measure-theoretic entropy. For $\beta\in \mathbb{R}$, $h(\beta)$ is defined as above.

Now we sketch the proof of Theorem 1.4. We first notice that the fact of Lebesgue measure $m$ being an SRB measure brings us the phase transition at $t=1$ for $\widetilde{G}$ with $\varphi_t$, which says there's a gap between $\alpha_2:=D^{-}\mathscr{P}(1)$ and $D^{+}\mathscr{P}(1)$, which is simply $0$. Define $\alpha_1:=\lim_{t\rightarrow -\infty}D^{+}\mathscr{P}(t)$. For the entropy spectrum, by applying the uniqueness result of the equilibrium state in  Theorem 1.2, we have a complete picture for $\alpha\in (\alpha_1,\alpha_2)$ from Theorem 3.1.1 in \cite{Cli10}, which says that $h(-\alpha)=\mathscr{E}(\alpha)$ for all such $\alpha$ and $L(-\alpha)=\emptyset$ for $\alpha<\alpha_1$ or $\alpha>0$. For $\alpha\in [\alpha_2,0)$, we need to show that $L(-\alpha)$ is non-empty. This will enable us to apply Theorem 3.5 in \cite{TV03} and get $h(-\alpha)=\mathscr{E}(\alpha)$ for these $\alpha$. In conclusion, we know $h(-\alpha)=\mathscr{E}(\alpha)$ for all $\alpha\in (\alpha_1,0)$.

To prove $L(-\alpha)$ is non-empty for $\alpha\in [\alpha_2,0)$, we follow the construction in \cite{BG14}.  The idea is to construct a sequence of invariant subsets with well-known dimension estimates that asymptotically consumes all the pressure. Essentially, we construct a nested sequence of basic sets $\{\widetilde{\Lambda}_i\}_{i\in \mathbb{N}}$ such that $\widetilde{\Lambda}_i\subset \widetilde{\Lambda}_{i+1}$ and $\mathscr{P}_{\widetilde{\Lambda}_i}(t)\nearrow \mathscr{P}(t)$, where $\mathscr{P}_{\widetilde{\Lambda}_i}(t)$ means the topological pressure of $\varphi_t$ over $\widetilde{\Lambda}_i$ and by basic sets, we refer to locally maximal compact transitive $\widetilde{G}$-invariant hyperbolic sets. Then due to the thermodynamic formalism of basic sets and smoothness of the pressure function $\mathscr{P}_{\widetilde{\Lambda}_i}(t)$, for each $\alpha\in [\alpha_2,0)$, there is some $N_{\alpha}\in \mathbb{N}$ such that $L(-\alpha)\cap \widetilde{\Lambda}_n\neq \emptyset$ for all $n\geq N_{\alpha}$, which provides us the desired result.

To estimate $d_H(-\alpha)$ for $\alpha \in (\alpha_1,0)$, we also rely on the construction of $\{\widetilde{\Lambda}_i\}_{i\in \mathbb{N}}$ above. We estimate $d_H(-\alpha)$ from below in terms of the Hausdorff dimension of $\lim_{n\rightarrow \infty}L(-\alpha)\cap \widetilde{\Lambda}_n$. Define $\mathscr{E}_{\widetilde{\Lambda}_i}(\alpha):=\inf_{t\in \mathbb{R}}(\mathscr{P}_{\widetilde{\Lambda}_i}(t)-t\alpha)$ as the Legendre transform of $\mathscr{P}_{\widetilde{\Lambda}_i}(t)$. The Hausdorff dimension of the Lyapunov spectrum for the basic set is well-known: It should be $\dim_H(\widetilde{\Lambda}_i\cap L(-\alpha))=\frac{2\mathscr{E}_{\widetilde{\Lambda}_i}(\alpha)}{-\alpha}$. Moreover, if $\lim_{i\rightarrow \infty}\mathscr{E}_{\widetilde{\Lambda}_i}(\alpha)=\mathscr{E}(\alpha)$ for all $\alpha \in (\alpha_1,0)$, we immediately get $\frac{2\mathscr{E}(\alpha)}{-\alpha}$ to be the desired lower bound of $d_H(-\alpha)$. However, in the homeomorphism case, this statement concerning the Lyapunov spectrum of basic set does not seem to exist in any of the references available. To avoid ambiguity, we apply the Main Theorem from \cite{You82}, which says $\dim_H(\widetilde{\Lambda}_i\cap L(-\alpha)) \geq \frac{2\mathscr{E}_{\widetilde{\Lambda}_i}(\alpha)}{-\alpha}$. Therefore, we still get $\frac{2\mathscr{E}(\alpha)}{-\alpha}$ to be the lower bound of $d_H(-\alpha)$, which concludes Theorem 1.4.  

\subsection{Entropy spectrum for $\alpha \in (\alpha_1, \alpha_2)$}
We first show that $h(-\alpha)=\mathscr{E}(\alpha)$ for $\alpha\in (\alpha_1, \alpha_2)$. Recall that $\alpha_1=\lim_{t\rightarrow -\infty}D^{+}\mathscr{P}(t)$, $\alpha_2=D^{-1}\mathscr{P}(1)$ and $\mathscr{E}(\alpha)=\inf_{t\in \mathbb{R}}(\mathscr{P}(t)-t\alpha)$. To obtain this result, we need the uniqueness of equilibrium state for $\phi_t$ derived in Theorem 1.2 and apply Theorem 3.1.1 in \cite{Cli10}.

\begin{proposition}
\begin{enumerate}
    \item $\mathscr{P}$ is the Legendre transform of the Birkhoff spectrum. In other words, 
$$\mathscr{P}(t)=\sup_{\alpha\in \mathbb{R}}(h(-\alpha)+t\alpha).$$

    \item $L(-\alpha)=\emptyset$ for every $\alpha<\alpha_{1}$ and every $\alpha>0$.
    
    \item $h(-\alpha)$ has domain $(\alpha_1, \alpha_2)$ and is the Legendre transform of $\mathscr{P}$, which means:
    $$h(-\alpha)=\inf_{t\in \mathbb{R}}(\mathscr{P}(t)-t\alpha).$$
    Moreover, for $\alpha'\in (\alpha_1,\alpha_2)$, if $h(-\alpha')=\mathscr{P}(t')-t'\alpha'$ and $\mathscr{P}$ is strictly convex at $t=t'$, then $\mathscr{B}$ is strictly concave and $C^1$ at $\alpha=\alpha'$. 
\end{enumerate}
\end{proposition}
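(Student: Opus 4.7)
The strategy is to invoke Climenhaga's general multifractal formalism (Theorem 3.1.1 of \cite{Cli10}), whose hypotheses are supplied by the uniqueness statement of Theorem 1.2, and to handle the two ``boundary'' exclusions in part (2) by direct arguments.

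First I would dispatch part (2). For $\alpha>0$, the key observation is that the Katok construction merely slows down expansion along the unstable direction but never reverses it, so $|D\widetilde{G}|_{E^u(x)}|\geq 1$ on all of $\mathbb{T}^2$; hence $-\varphi^{geo}\geq 0$ and every Lyapunov-regular $x$ satisfies $\chi^+(x)\geq 0$, forcing $L(-\alpha)=\emptyset$. For $\alpha<\alpha_1$, any $x\in L(-\alpha)$ would make every weak$^{*}$ accumulation point $\nu$ of $\frac{1}{n}\sum_{i=0}^{n-1}\delta_{\widetilde{G}^i x}$ an invariant measure with $\int\varphi^{geo}d\nu=\alpha$. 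A convex-analysis argument applied to $\mathscr{P}(t)=\sup_\mu\bigl(h_\mu+t\int\varphi^{geo}d\mu\bigr)$ identifies $\lim_{t\to-\infty}D^{+}\mathscr{P}(t)$ with the infimum of the slopes $\int\varphi^{geo}d\mu$ realized by invariant measures, so $\alpha_1\leq\int\varphi^{geo}d\nu=\alpha$, a contradiction.

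For the upper bound in part (1), I would use the variational principle together with Bowen's entropy inequality for non-compact sets: for each ergodic $\mu$ the Birkhoff ergodic theorem yields $\mu(L(-\alpha(\mu)))=1$ where $\alpha(\mu):=\int\varphi^{geo}d\mu$, hence $h_\mu\leq h(L(-\alpha(\mu)))=h(-\alpha(\mu))$, and taking sup gives $\mathscr{P}(t)\leq\sup_\alpha(h(-\alpha)+t\alpha)$. For the matching lower bound and for part (3), I would plug into Theorem 3.1.1 of \cite{Cli10}, whose standing assumption is existence and uniqueness of an equilibrium state for $\varphi_t$ on an open interval of parameters; this is furnished by Theorem 1.2 throughout $t<1$. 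That theorem then yields $h(-\alpha)=\mathscr{E}(\alpha)$ on the interior of the image of the subdifferential of $\mathscr{P}$ restricted to $(-\infty,1)$, which is precisely $(\alpha_1,\alpha_2)$, and the strict concavity / $C^1$ statement at $\alpha'$ follows from the standard Legendre--Fenchel duality between strict convexity and differentiability of conjugate pairs.

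The principal obstacle is not internal to Section~9 but external: verifying the uniqueness hypothesis of Theorem~3.1.1 of \cite{Cli10} across the full half-line $t\in(-\infty,1)$, which is exactly the content of Theorem 1.2 and constitutes the technical heart of the paper. A secondary subtlety is the rigorous identification of $\alpha_1$ with $\inf_\mu\int\varphi^{geo}d\mu$: this infimum is typically not attained (it corresponds to a ``maximal Lyapunov exponent'' measure living away from $\underaccent{\bar}{0}$), so one must argue via convex duality of $\mathscr{P}$ rather than via a minimizing measure. Everything else reduces to routine application of Birkhoff, the variational principle, Bowen's entropy characterization, and Legendre duality.
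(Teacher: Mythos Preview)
Your proposal is correct and follows essentially the same route as the paper: the paper itself does not give an independent proof of this proposition but refers the reader to \S3.4 (Theorem 3.1.1) of \cite{Cli10}, supplying the uniqueness hypothesis via Theorem~1.2, and then adds explanatory remarks identifying $\mathscr{P}'(t_\alpha)=\int\varphi^{geo}\,d\mu_{t_\alpha}$ with the slope of the supporting line---exactly the mechanism you outline. Your treatment of part~(2) is in fact more explicit than the paper's; one small caution is that the pointwise inequality $|D\widetilde{G}|_{E^u(x)}|\geq 1$ is immediate for $G$ from the flow equations but is slightly less transparent for $\widetilde{G}$ because the conjugacy $\phi$ fails to be $C^1$ at the origin, so the cleaner route is the ergodic-theoretic one (Proposition~3.1(3): every ergodic measure other than $\delta_0$ has strictly positive exponent, hence no invariant measure has $\int(-\varphi^{geo})\,d\mu<0$), which already suffices for your weak$^*$-limit argument.
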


The proof of Proposition 9.1 can be found in $\mathsection 3.4$ in \cite{Cli10}. Here we give a few remarks on how to understand it. Proposition 9.1(1) is a general result which holds for all continuous map on compact space, without any requirements on the dynamics. Essentially it is a rewriting of variational principle. Proposition 9.1(3) gives the reverse implication. We know from the upper-semi continuity of pressure map and the uniqueness of equilibrium states $\mu_t$ for $\varphi_t$ with $t<1$ that the pressure function $\mathscr{P}(t)$ is $C^1$ when $t<1$. In fact, for each $\alpha \in (\alpha_1,\alpha_2)$, there is a unique supporting line $l_{\alpha}$ tangent to $\mathscr{P}(t)$ with slope being $\alpha$. Observe that the $y$-coordinate of the intersection of $l_{\alpha}$ and $y$-axis is just $\mathscr{E}(\alpha)$, the Legendre transform of $\mathscr{P}$ at $\alpha$.  Meanwhile, the slope of the tangent line of $\mathscr{P}$ at each $t<1$ is also known to be $\int{\varphi^{geo} d_{\mu_t}}$ by classic results (see for example Proposition 3.4.3 in \cite{Cli10}). Therefore, if $l_{\alpha}$ intersects the graph of $\mathscr{P}$ at $(t_{\alpha},\mathscr{P}(t_{\alpha}))$, we have
\begin{equation}
\mathscr{P}'(t_\alpha)=\int{\varphi^{geo} d{\mu_{t_{\alpha}}}}=\alpha.
\end{equation}
By $\mathscr{P}(t_{\alpha})=h(\mu_{t_{\alpha}})+t_{\alpha}\int{\varphi^{geo} d{\mu_{t_{\alpha}}}}$, we immediately see that $h(\mu_{t_{\alpha}})=\mathscr{E}(\alpha)$. The definition of $\mu_{t_{\alpha}}$ shows that it is ergodic, thus supported on $L(-\alpha)$ by (9.1). Therefore, $h(-\alpha)\geq \mathscr{E}(\alpha)$ by variational principle. According to Proposition 3.1(1) and the classic properties of Legendre transform of convex function, we know $h(-\alpha)\leq  \mathscr{E}(\alpha)$. In conclusion, we have $h(-\alpha)=\mathscr{E}(\alpha)$ for $\alpha\in (\alpha_1,\alpha_2)$. 

\subsection{Proof of $L(\alpha)$ being non-empty for $\alpha\in (\alpha_1,0)$}

In the last section, we just defined $l_{\alpha}$ to be the supporting line of $\mathscr{P}(t)$ with slope $\alpha$ for all $\alpha\in (\alpha_1,\alpha_2)$. In fact, we can extend this definition of to all $\alpha\in (\alpha_1,0]$ and each $l_{\alpha}$ intersects $y$-axis at $(0,\mathscr{E}(\alpha))$.

Due to the existence of neutral fixed point at the origin, we know $\mathscr{P}(t)=0$ for all $t\geq 1$. As a result, $l_{\alpha}(t)$ intersects the $x$-axis at $t=1$ when $\alpha \in [\alpha_2,0]$. For those $\alpha$, $\mathscr{E}(\alpha)=-\alpha$. Then a natural question is to ask if $h(-\alpha)=\mathscr{E}(\alpha)=-\alpha$ for $\alpha \in [\alpha_2,0]$. To answer this question, we will need $L(-\alpha)$ to be at least non-empty for those greater $\alpha$, which is the main proposition that we will prove in this section.

\begin{proposition}
$L(-\alpha)$ is non-empty for all $\alpha\in (\alpha_1,0]$.
\end{proposition}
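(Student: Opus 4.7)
The plan is to handle $\alpha\in(\alpha_1,\alpha_2)$, $\alpha=0$, and $\alpha\in[\alpha_2,0)$ separately, with the third case being by far the most delicate.

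For $\alpha\in(\alpha_1,\alpha_2)$, I will combine Theorem 1.2 with the computation already sketched in Section 9.2. The supporting line $l_\alpha$ touches the graph of $\mathscr{P}$ at a unique point $(t_\alpha,\mathscr{P}(t_\alpha))$ with $t_\alpha<1$. Theorem 1.2 gives a unique (hence ergodic) equilibrium state $\mu_{t_\alpha}$ for $\varphi_{t_\alpha}$, and the classical differentiation-of-pressure formula (Proposition 3.4.3 in \cite{Cli10}) identifies $\int\varphi^{geo}\,d\mu_{t_\alpha}=\mathscr{P}'(t_\alpha)=\alpha$. By the Birkhoff ergodic theorem, $\mu_{t_\alpha}$-a.e.\ point lies in $L(-\alpha)$, so the level set is non-empty. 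For $\alpha=0$ the origin $\underaccent{\bar}{0}$ is itself a fixed point of $\widetilde{G}$ whose Lyapunov exponents are zero by Proposition 3.1(3), so $\underaccent{\bar}{0}\in L(0)$.

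The remaining range $\alpha\in[\alpha_2,0)$ is the real content. The plan is to construct a nested exhausting sequence of basic sets $\widetilde{\Lambda}_1\subset\widetilde{\Lambda}_2\subset\cdots$ by fixing a Markov partition for $f_A$, transferring it through the conjugacy of Proposition 3.1(1), and defining $\widetilde{\Lambda}_i$ to be the maximal $\widetilde{G}$-invariant set contained in the union of those Markov rectangles whose $\widetilde{G}$-orbit avoids a shrinking neighborhood $U_i$ of $\underaccent{\bar}{0}$ with $\bigcap_i U_i=\{\underaccent{\bar}{0}\}$. Outside $D_{r_0}$ the map $\widetilde{G}$ coincides with the linear Anosov $f_A$, and the cone invariance from Lemma 3.4 extends with uniform expansion and contraction rates bounded away from $1$ on $\widetilde{\Lambda}_i$, so each $\widetilde{\Lambda}_i$ is a topologically transitive locally maximal uniformly hyperbolic set. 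On each $\widetilde{\Lambda}_i$ the potential $\varphi^{geo}$ is H\"older continuous (by the standard argument recalled at the start of Section 6.2, valid in the uniformly hyperbolic restriction), so $t\mapsto\mathscr{P}_{\widetilde{\Lambda}_i}(t)$ is real analytic with derivative $\mathscr{P}'_{\widetilde{\Lambda}_i}(t)=\int\varphi^{geo}\,d\mu_{t,i}$, where $\mu_{t,i}$ is the unique equilibrium state of $\varphi_t$ on $\widetilde{\Lambda}_i$.

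The monotone convergence $\mathscr{P}_{\widetilde{\Lambda}_i}(t)\nearrow\mathscr{P}(t)$ for each $t<1$ follows from the entropy-density argument already deployed in Section 8.3: any hyperbolic invariant measure of $\widetilde{G}$ can be approximated in the weak* topology and in entropy by ergodic measures supported on hyperbolic periodic orbits, and each such periodic orbit lies in some $\widetilde{\Lambda}_i$; the only remaining invariant measure is $\delta_0$, which contributes $\varphi_t(\underaccent{\bar}{0})=0<\mathscr{P}(t)$ when $t<1$ and can therefore be discarded. Given these inputs, fix $\alpha\in[\alpha_2,0)$. The periodic orbits $\{p_n\}$ constructed in Section 8.3 already witness points on $\widetilde{\Lambda}_i$ whose positive Lyapunov exponents tend to $0$ as $i\to\infty$, so the supremum of $-\mathscr{P}'_{\widetilde{\Lambda}_i}(t)$ over $t\in\mathbb{R}$ (which equals the maximum positive Lyapunov exponent on $\widetilde{\Lambda}_i$) stays above $|\alpha_1|$ and the infimum tends to $0$. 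By convexity and real analyticity of $\mathscr{P}_{\widetilde{\Lambda}_i}$, for all $i$ beyond some $i_\alpha$ there is $t_{\alpha,i}\in\mathbb{R}$ with $\mathscr{P}'_{\widetilde{\Lambda}_i}(t_{\alpha,i})=\alpha$. The corresponding equilibrium state $\mu_{t_{\alpha,i},i}$ is ergodic with $\int\varphi^{geo}\,d\mu_{t_{\alpha,i},i}=\alpha$, so Birkhoff places $\mu_{t_{\alpha,i},i}$-almost every point in $\widetilde{\Lambda}_i\cap L(-\alpha)$, which is thus non-empty.

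The main obstacle I expect is the pressure convergence $\mathscr{P}_{\widetilde{\Lambda}_i}(t)\nearrow\mathscr{P}(t)$: one must show that the portion of an arbitrary invariant measure supported near the neutral fixed point can be truncated at arbitrarily small cost in $h_\mu+t\int\varphi^{geo}\,d\mu$, which relies crucially on the gap $\varphi_t(\underaccent{\bar}{0})<\mathscr{P}(t)$ verified in Section 7.2 and breaks precisely at $t=1$, matching the location of the phase transition.
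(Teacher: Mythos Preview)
Your three-case split and the overall strategy for the hard case $\alpha\in[\alpha_2,0)$---approximate by a nested sequence of basic sets, then use differentiability of the pressure on each to hit the slope $\alpha$ via the intermediate value theorem---is exactly the paper's plan. The paper builds the basic sets differently, following the Burns--Gelfert construction \cite{BG14}: start from $\Lambda_n=\overline{\mathrm{Per}(\mathbb{T}^2\setminus B(\tfrac{1}{10n}))}$, enlarge to a locally maximal hyperbolic set (Lemma~9.5), and then glue the transitive pieces of the spectral decomposition into a single basic set (Lemma~9.6). Your Markov-partition route is natural given the conjugacy to $f_A$, and would be more elementary if it worked.

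There is, however, a genuine gap in your construction: you assert that $\widetilde{\Lambda}_i$ is topologically transitive, but removing a collection of rectangles from a mixing subshift of finite type does not in general leave an irreducible subshift. Without transitivity, $t\mapsto\mathscr{P}_{\widetilde{\Lambda}_i}(t)$ is only a maximum of finitely many analytic functions, hence need not be $C^1$; moreover the equilibrium states need not be ergodic, so the IVT-for-the-derivative step and the Birkhoff conclusion both break. The whole point of the paper's Lemma~9.6 (and the reason it invokes the Burns--Gelfert machinery) is precisely to manufacture a \emph{single} transitive basic set containing the two periodic orbits whose Lyapunov exponents bracket $-\alpha$. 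You need either that gluing step or a direct argument that your particular subshift is irreducible.

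Separately, your justification of the pressure convergence $\mathscr{P}_{\widetilde{\Lambda}_i}(t)\nearrow\mathscr{P}(t)$ is incorrect as written: periodic-orbit measures have zero entropy, so they cannot approximate a positive-entropy measure ``in entropy,'' and \S8.3 only gives entropy density of \emph{ergodic} measures, not periodic ones. The paper uses Katok's horseshoe theorem here instead. That said, for Proposition~9.2 itself you do not actually need pressure convergence; you only need the range of Lyapunov exponents on $\widetilde{\Lambda}_i$ to contain $-\alpha$, and that is indeed witnessed by the two periodic orbits (one in the Anosov region with exponent $\log\lambda$, one of the $p_n$ from \S8.3 with small exponent). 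So this error sits in a claim you state but never use for the proposition at hand---though you will need the correct version later for the dimension estimates in \S9.4.
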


We know the case for $\alpha\in (\alpha_1,\alpha_2)$ according to the analysis in $\mathsection 9.2$. We also know that the origin belongs to $L(0)$ by Proposition 3.1(3). As a result, we only need to verify that $L(-\alpha)$ is non-empty for $\alpha\in [\alpha_2,0)$. As introduced in $\mathsection 9.1$, we will construct an increasingly nested sequence of basic sets $\{\widetilde{\Lambda_i}\}_{i\in \mathbb{N}}$, $\widetilde{\Lambda}_i\subset \widetilde{\Lambda}_{i+1}$ and show that for any such $\alpha$, when $i$ is large enough, $\widetilde{\Lambda}_{i}$ will have non-trivial intersection with $L(-\alpha)$. In fact, this result follows from the following proposition:

\begin{proposition}
For any basic set $\Lambda\subset \mathbb{T}^2$, write $\mathscr{P}_{\Lambda}(t)$ as the topological pressure of $\varphi_t$ over $\Lambda$. There is an increasing sequence of basic sets $\{\widetilde{\Lambda_i}\}_{i\in \mathbb{N}}$, $\widetilde{\Lambda}_i\subset \widetilde{\Lambda}_{i+1}$ such that $\mathscr{P}_{\widetilde{\Lambda}_i}(t)\nearrow \mathscr{P}(t)$ pointwisely.
\end{proposition}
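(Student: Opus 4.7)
Plan. The strategy is to construct $\widetilde{\Lambda}_i$ as the maximal $\widetilde{G}$-invariant set that avoids a shrinking neighborhood of the neutral fixed point, and then show that its pressure exhausts $\mathscr{P}(t)$ in the limit via a horseshoe approximation of hyperbolic measures.

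Construction and hyperbolicity. Pick open balls $U_n$ centered at $\underaccent{\bar}{0}$ with radii $\rho_n \searrow 0$ and $\rho_1 < r_0$, and set
$$\widehat{\Lambda}_n := \bigcap_{k \in \mathbb{Z}} \widetilde{G}^{-k}\bigl(\mathbb{T}^2 \setminus U_n\bigr).$$
This is compact, $\widetilde{G}$-invariant, locally maximal, and monotone in $n$. Since every $x \in \widehat{\Lambda}_n$ satisfies $d(x, \underaccent{\bar}{0}) \geq \rho_n$, the explicit slow-down formula in $\mathsection 3$ yields a uniform lower bound of the form $\|D\widetilde{G}|_{E^u(x)}\| \geq \lambda^{\psi(\rho_n^2)} > 1$ throughout $\widehat{\Lambda}_n$. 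Combined with the invariant cone field from Lemma 3.4, $\widehat{\Lambda}_n$ is uniformly hyperbolic. Applying the spectral decomposition theorem for locally maximal hyperbolic sets, together with the fact established in $\mathsection 8.3$ that every pair of hyperbolic periodic points of $\widetilde{G}$ is homoclinically related, the non-wandering set $\widetilde{\Lambda}_n$ inside $\widehat{\Lambda}_n$ is a single basic set, and $\widetilde{\Lambda}_n \subset \widetilde{\Lambda}_{n+1}$.

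Pressure approximation. The inequality $\mathscr{P}_{\widetilde{\Lambda}_n}(t) \leq \mathscr{P}_{\widetilde{\Lambda}_{n+1}}(t) \leq \mathscr{P}(t)$ is immediate from monotonicity. To show the limit equals $\mathscr{P}(t)$, fix $t \in \mathbb{R}$ and $\eta > 0$, and by the variational principle choose an ergodic measure $\mu$ with $h_\mu(\widetilde{G}) + t\int \varphi^{geo}\, d\mu > \mathscr{P}(t) - \eta$. If $\mu \neq \delta_0$, then by Proposition 3.1(3) $\mu$ is hyperbolic with $\mu(\{\underaccent{\bar}{0}\}) = 0$, hence $\mu(U_n) \to 0$. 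Katok's horseshoe theorem then produces a sequence of uniformly hyperbolic horseshoes $K_j$ whose topological entropy and integral of $\varphi^{geo}$ approximate those of $\mu$, and for large $j$ these horseshoes can be arranged to lie inside $\widehat{\Lambda}_n$ for suitable $n$, yielding $\mathscr{P}_{\widetilde{\Lambda}_n}(t) > \mathscr{P}(t) - O(\eta)$. In the remaining case $\mu = \delta_0$ (which forces $t \geq 1$ and $\mathscr{P}(t) = 0$), use the hyperbolic periodic orbits $p_n$ with Lyapunov exponents $\chi^+(p_n) \to 0$ constructed in $\mathsection 8.3$, noting $p_n \in \widetilde{\Lambda}_n$ for large enough $n$, to obtain $\mathscr{P}_{\widetilde{\Lambda}_n}(t) \geq -t\chi^+(p_n) \to 0$.

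Main obstacle. The hardest step is applying Katok's horseshoe approximation inside the pre-specified $\widehat{\Lambda}_n$: one must show that the Pesin blocks of $\mu$ eventually avoid $U_n$, which combines $\mu(U_n) \to 0$ with the uniform hyperbolicity estimates away from the origin coming from the explicit slow-down formula. A secondary subtlety is coordinating the choice of $\rho_n$ so that a single basic set (rather than several disjoint transitive pieces) emerges from the spectral decomposition at each stage, and so that the nesting $\widetilde{\Lambda}_n \subset \widetilde{\Lambda}_{n+1}$ survives; the homoclinic-relatedness observation from $\mathsection 8.3$ is precisely what one needs to rule out the latter pathology.
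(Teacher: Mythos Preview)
Your overall strategy---build an exhausting nested family of basic sets avoiding shrinking neighborhoods of the origin, then feed Katok's horseshoe theorem into it---is the same as the paper's, and your treatment of the pressure approximation (including the $\delta_0$ case via the periodic orbits of $\mathsection 8.3$) is essentially what the paper does. The difficulty you have under-estimated is not the horseshoe step but the construction of the $\widetilde{\Lambda}_n$ themselves, and there the proposal has two real gaps.

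First, the assertion that $\widehat{\Lambda}_n=\bigcap_{k\in\mathbb{Z}}\widetilde{G}^{-k}(\mathbb{T}^2\setminus U_n)$ is locally maximal is not justified. The set $\mathbb{T}^2\setminus U_n$ is closed, not open, and $\widehat{\Lambda}_n$ may meet $\partial U_n$; in that case every open neighborhood of $\widehat{\Lambda}_n$ contains invariant orbits dipping slightly into $U_n$, so $\widehat{\Lambda}_n$ need not be isolated. Without local maximality you cannot invoke spectral decomposition, and the downstream definition of $\widetilde{\Lambda}_n$ collapses. The paper circumvents this with Lemma~9.5, a genuine construction: one covers $\Lambda\cap S$ by small $su$-rectangles, builds a subshift of finite type, and obtains $\Lambda'$ as the image of this shift under a shadowing map, inheriting local product structure from the symbolic model. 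This is exactly the step that Fisher's examples in \cite{F06} show cannot be skipped in general.

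Second, and more seriously, the transitivity argument fails. You invoke the fact that any two hyperbolic periodic points of $\widetilde{G}$ are homoclinically related in $\mathbb{T}^2$, but the heteroclinic orbit realizing that relation may pass through $U_n$ and hence lie outside $\widehat{\Lambda}_n$. So two periodic points in $\widehat{\Lambda}_n$ can lie in different spectral components of $\Omega(\widetilde{G}|_{\widehat{\Lambda}_n})$, and there is no single basic set to take. The paper's Lemma~9.6 handles precisely this: given two basic sets one adjoins an explicit heteroclinic loop, obtains a new compact invariant hyperbolic set, and then \emph{re-applies} the covering Lemma~9.5 to place everything inside a single transitive locally maximal piece. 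Your remark that ``the homoclinic-relatedness observation from $\mathsection 8.3$ is precisely what one needs'' misses that the relation must be realized \emph{inside} the candidate set, which requires this additional enlargement-and-recover step.
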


Let us show how Proposition 9.2 follows from this. We know that $\mathscr{P}$ is convex, $C^1$ for $t<1$ and $\mathscr{P}'(t)\searrow \alpha_1$ when $t\rightarrow -\infty$ by (9.1). We also know that $\mathscr{P}(t)=0$ for all $t \geq 1$. Now given any $\alpha\in [\alpha_2,0)$, we look at $t=2$. By Proposition 9.3, there exists $N_1\in \mathbb{N}$ such that when $n>N_1$, $\mathscr{P}_{\widetilde{\Lambda}_n}(2)>\alpha$. Meanwhile, since $\alpha\leq \alpha_2<\alpha_1$, there is some $T_{\alpha}<0$ sufficiently small such that $\mathscr{P}(T_{\alpha})>\frac{(\alpha_1+\alpha_2)(T_\alpha+1)}{2}$. Respectively we have some $N_2\in \mathbb{N}$ such that for all $n>N_2$, $\mathscr{P}_{\widetilde{\Lambda}_{n}}(T_{\alpha})>-\alpha(1+T_{\alpha})$. Finally, we know that $\widetilde{\Lambda}_n(t)$ is $C^1$ at all $t\in \mathbb{R}$ and $\mathscr{P}_{\widetilde{\Lambda}_{n}}(1)\leq \mathscr{P}(1)=0$ for all $n\in \mathbb{N}$. Then by the mean value theorem, we know for $n>\max\{N_1,N_2\}$, there is a point $T_{n,\alpha}$ such that $\mathscr{P}'_{\widetilde{\Lambda}_n}(T_{n,\alpha})=\alpha$. Now we use the classic result of uniqueness of equilibrium state for $\varphi_t$ over basic set and apply (9.1). This concludes that $L(\alpha)\cap \widetilde{\Lambda_{n}}$ is non-trivial for $n$ sufficiently large, therefore Proposition 9.2 follows from Proposition 9.3. Now let us prove Proposition 9.3. To satisfy the convergence in the pressure function, it suffices to  construct $\widetilde{\Lambda}_{n}$ in a way such that any basic set $\Lambda \subset \mathbb{T}^2$ is contained in $\widetilde{\Lambda}_m$ for some $m \geq 1$. This is because, by Katok horseshoe theorem \cite{KH95}, for any small $\epsilon_0>0$, hyperbolic ergodic $\mu$ and continuous $\varphi$, there is a basic set $\Lambda$ such that $P_{\Lambda}(\varphi)>P_{\mu}(\varphi)-\epsilon_0$. If such $\widetilde{\Lambda}_m$ exists, when $t<1$, we have $\mathscr{P}_{\widetilde{\Lambda}_m}(t)\geq P_{\Lambda}(\varphi_t)>P_{\mu_t}(\varphi_t)-\epsilon_0=\mathscr{P}(t)-\epsilon_0$. When $t\geq 1$, we know from $\mathsection 8.3$ that we can construct a hyperbolic periodic point $p'$ (which is not origin) whose Lyapunov exponent is smaller than $\epsilon_0$. Write $\mu_p$ to be the ergodic measure supported on the orbit of $p'$ and apply the Katok horseshoe theorem, we get a basic set $\Lambda^p$ such that $\mathscr{P}_{\Lambda^p}(t)>P_{\mu_p}-\epsilon_0>-2\epsilon_0$. Therefore, by covering $\Lambda^p$ using some $\widetilde{\Lambda}_{l}$, we have $\mathscr{P}_{\widetilde{\Lambda}_{l}}>-2\epsilon_0$. Since $\epsilon_0$ can be arbitrarily small, we know Proposition 9.3 follows from the following proposition.

\begin{proposition}
There is an increasing sequence of basic sets $\{\widetilde{\Lambda_i}\}_{i\in \mathbb{N}}$, $\widetilde{\Lambda}_i\subset \widetilde{\Lambda}_{i+1}$ such that for any basic set $\Lambda \subset \mathbb{T}^2$, there is some $n\geq 1$ such that $\Lambda\subset \widetilde{\Lambda}_n$.
\end{proposition}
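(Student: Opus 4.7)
The plan is to construct the nested basic sets $\widetilde{\Lambda}_n$ by transferring a sequence of subshifts of finite type from the linear Anosov diffeomorphism $f_A$ to $\widetilde{G}$ through the topological conjugacy $h$ of Proposition 3.1(1). The key observation that makes this possible is that a basic set must stay at positive distance from the origin: by Proposition 3.1(3), the only invariant measure with a zero Lyapunov exponent is $\delta_0$, so any compact $\widetilde{G}$-invariant hyperbolic set $\Lambda \subset \mathbb{T}^2$ must avoid $\underaccent{\bar}{0}$. Equivalently, $h^{-1}(\Lambda)$ avoids the fixed point $q:=h^{-1}(\underaccent{\bar}{0})$ of $f_A$, so we only need to exhaust $\mathbb{T}^2 \setminus \{q\}$ by basic sets of $f_A$ and push them forward through $h$.

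First, I would fix a Markov partition $\mathcal{R}$ of $\mathbb{T}^2$ for $f_A$ with $q$ lying in the interior of a single rectangle, and form the refinements $\mathcal{R}_n := \bigvee_{k=-n}^{n} f_A^{-k}\mathcal{R}$. Let $R_q^{(n)}$ denote the element of $\mathcal{R}_n$ containing $q$; its diameter tends to $0$. Define $\Sigma_n$ to be the maximal $f_A$-invariant subset of $\mathbb{T}^2 \setminus R_q^{(n)}$, i.e., the subshift of finite type on the remaining symbols. The sequence $\Sigma_n$ is automatically increasing because the refinements are nested and $R_q^{(n+1)} \subset R_q^{(n)}$. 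Each $\Sigma_n$ is compact, $f_A$-invariant, locally maximal, and uniformly hyperbolic (as a subset of an Anosov system). Transitivity, and in fact topological mixing, of $\Sigma_n$ requires a short combinatorial argument showing that deleting the single state $R_q^{(n)}$ from the strongly connected, aperiodic transition graph of $\mathcal{R}_n$ leaves a strongly connected, aperiodic graph for $n$ large enough; this is where one uses the topological mixing of $f_A$ together with the fact that the density of return times to each rectangle greatly exceeds what removal of one state can disrupt.

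Next, I would set $\widetilde{\Lambda}_n := h(\Sigma_n)$. Compactness, $\widetilde{G}$-invariance, transitivity, and local maximality are purely topological/dynamical properties and transfer verbatim through the homeomorphism $h$ since $\widetilde{G} = h \circ f_A \circ h^{-1}$. Hyperbolicity, being a smooth notion, must be verified separately: by construction, $\widetilde{\Lambda}_n$ lies at positive distance $\delta_n > 0$ from $\underaccent{\bar}{0}$, so every point of $\widetilde{\Lambda}_n$ lies outside the perturbed region $D_{r_1}$ once $n$ is large enough, and in any case the invariant cone field $C_{\beta}(F^1,F^2), C_{\beta}(F^2,F^1)$ from Lemma 3.4 remains $D\widetilde{G}$-invariant, with uniform expansion/contraction rate bounded below by $\lambda(1-\beta) > 1$ on $\widetilde{\Lambda}_n$ because on this set the iterates never enter the neighborhood of origin where the slowdown is active; this yields uniform hyperbolicity in the usual cone sense.

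Finally, to see that every basic set $\Lambda$ of $\widetilde{G}$ is eventually captured, note that $h^{-1}(\Lambda)$ is compact and avoids $q$, so $h^{-1}(\Lambda) \cap R_q^{(n)} = \emptyset$ as soon as $\operatorname{diam}(R_q^{(n)}) < d(q, h^{-1}(\Lambda))$. Since $h^{-1}(\Lambda)$ is $f_A$-invariant and contained in $\mathbb{T}^2 \setminus R_q^{(n)}$, it must lie in $\Sigma_n$ by maximality, hence $\Lambda \subset \widetilde{\Lambda}_n$. The expected main obstacle is the transitivity/mixing of $\Sigma_n$: removing a state from an SFT can in principle break strong connectivity, and I expect that one has to either pass to a subsequence $n_k$ or choose the initial Markov partition with enough symbolic redundancy around $q$ so that the reduced graph remains primitive for all large $n$. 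Once this combinatorial step is handled, all remaining assertions follow routinely from the conjugacy and the cone estimates of Section 3.
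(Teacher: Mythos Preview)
Your approach via the conjugacy $h$ to $f_A$ and Markov partitions is genuinely different from the paper's. The paper works intrinsically with $\widetilde{G}$: it sets $\Lambda_n=\overline{\text{Per}(\mathbb{T}^2\setminus B(\underaccent{\bar}{0},1/10n))}$, covers each $\Lambda_n$ by a compact locally maximal hyperbolic set via a shadowing/SFT construction (Lemma 9.5), and then merges the transitive pieces of the spectral decomposition using heteroclinic connections (Lemma 9.6). This route sidesteps your transitivity obstacle entirely---rather than proving the reduced transition graph is still primitive, the paper glues components after the fact---and it obtains hyperbolicity of the enlarged set from structural stability of the given $\Lambda$ in a small neighborhood $U$, not from a global cone estimate. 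Your approach, by contrast, exploits the concrete conjugacy and would in principle be more elementary, replacing two technical lemmas by a single Markov-partition argument; the capturing step $h^{-1}(\Lambda)\subset \Sigma_n$ is clean and correct.

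However, your verification of hyperbolicity has a genuine error of direction. You assert that ``every point of $\widetilde{\Lambda}_n$ lies outside the perturbed region $D_{r_1}$ once $n$ is large enough'' and that on $\widetilde{\Lambda}_n$ ``the iterates never enter the neighborhood of origin where the slowdown is active.'' The opposite is true: as $n$ grows, $R_q^{(n)}$ shrinks, so $\Sigma_n$ and hence $\widetilde{\Lambda}_n=h(\Sigma_n)$ \emph{grow} and approach $\underaccent{\bar}{0}$. For large $n$, orbits in $\widetilde{\Lambda}_n$ necessarily pass deep inside $D_{r_1}$, where the expansion on the cone $C_\beta(F^1,F^2)$ is arbitrarily close to $1$, not bounded below by $\lambda(1-\beta)$. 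What remains true is only that $\widetilde{\Lambda}_n$ stays at some positive distance $\delta_n>0$ from $\underaccent{\bar}{0}$ with $\delta_n\to 0$. Uniform hyperbolicity of $\widetilde{\Lambda}_n$ (with $n$-dependent constants) can still be rescued, for instance by bounding the consecutive time any orbit at distance $\ge\delta_n$ from the origin spends in $D_{r_1}$ (cf.\ the estimate $T(\rho)$ in the paper's \S 8.3) and combining this with the genuine expansion $\lambda$ outside $D_{r_1}$; but this is precisely the non-uniform-hyperbolicity bookkeeping you were hoping the conjugacy would let you bypass, and Lemma 3.4 by itself does not supply it.
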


We point out that for general cases, this result is not always true (see \cite{F06}).

Now we follow the construction in \cite{BG14} and prove Proposition 9.4. Essentially we first cover the $\widetilde{G}$-invariant hyperbolic sets using compact locally maximal $\widetilde{G}$-invariant hyperbolic set, then glue the transitive components together via a gluing process. The essential lemma in the covering of hyperbolic sets is an analog to Proposition 4.3 in \cite{BG14}.

\begin{lemma}
Given any compact $\widetilde{G}$-invariant hyperbolic $\Lambda\subset \mathbb{T}^2$ and any neighborhood $U$ of $\Lambda$, there is a compact locally maximal $\widetilde{G}$-invariant hyperbolic set $\Lambda'$ such that $\Lambda\subset \Lambda' \subset U$.
\end{lemma}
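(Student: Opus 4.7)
The plan is to build $\Lambda'$ as the maximal $\widetilde{G}$-invariant subset of a suitably chosen compact neighborhood of $\Lambda$, and then verify its hyperbolicity via the standard invariant cone-field argument from uniformly hyperbolic theory. The first observation is that $\underaccent{\bar}{0}\notin \Lambda$: by Proposition 3.1(3), $\delta_0$ is the only ergodic invariant measure with a zero Lyapunov exponent, so if $\underaccent{\bar}{0}\in \Lambda$ then $\delta_0$ would be an invariant measure on $\Lambda$ with zero exponents, contradicting hyperbolicity of $\Lambda$. Hence $\Lambda \subset \mathbb{T}^2 \setminus D_{\rho}$ for some $\rho>0$, and by shrinking $U$ I may assume $\overline{U}\cap D_{\rho/2}=\emptyset$, placing everything inside a region where $\widetilde{G}$ is a uniform $C^{\infty}$ diffeomorphism.

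Next I would invoke the classical cone-field construction for hyperbolic sets (see \cite{KH95}, Ch.~6). Uniform hyperbolicity of $\widetilde{G}|_{\Lambda}$ together with continuity of $D\widetilde{G}$ yields an open neighborhood $W$ with $\Lambda\subset W\subset U$ and continuous cone fields $C^u, C^s$ on $W$, such that whenever $x$ and $\widetilde{G}(x)$ both lie in $W$ one has $D\widetilde{G}_x(C^u(x))\subset \mathrm{int}\, C^u(\widetilde{G}x)$ with vectors in $C^u(x)$ expanded by a uniform factor $>1$, and the symmetric statement for $D\widetilde{G}^{-1}$ on $C^s$. Choose a compact neighborhood $V$ with $\Lambda\subset \mathrm{int}(V) \subset V\subset W$, and define
$$\Lambda' := \bigcap_{n\in \mathbb{Z}}\widetilde{G}^n(V).$$
Then $\Lambda'$ is compact, $\widetilde{G}$-invariant, contains $\Lambda$ (which is itself invariant and lies in $V$), and is the maximal $\widetilde{G}$-invariant subset of $V$, hence locally maximal.

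To establish hyperbolicity of $\Lambda'$: every $x\in \Lambda'$ has its entire forward and backward orbit in $V\subset W$, so the cone-invariance condition applies at every iterate. Intersecting the nested decreasing sequence $\{D\widetilde{G}^n C^u(\widetilde{G}^{-n}x)\}_{n\geq 0}$ yields a one-dimensional subspace $E^u(x)$, and similarly a subspace $E^s(x)$, giving a continuous $D\widetilde{G}$-invariant splitting on $\Lambda'$ with uniform expansion in $E^u$ and contraction in $E^s$. This is precisely hyperbolicity.

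The main obstacle I anticipate is producing cone fields on $W$ with strict invariance and uniform expansion rates (not merely asymptotic behavior along $\Lambda$-orbits). This is a standard continuity argument once $W$ is chosen small enough, and is facilitated here by $\overline{U}$ being bounded away from the origin, where $\widetilde{G}$ loses uniform hyperbolicity; on the smooth region containing $\overline{U}$ the routine uniformly hyperbolic argument applies without modification, so the construction goes through and the conclusion follows.
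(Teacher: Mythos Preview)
Your argument has a genuine gap at the step where you conclude that $\Lambda' = \bigcap_{n\in\mathbb{Z}} \widetilde{G}^n(V)$ is locally maximal. Being the maximal invariant subset of the \emph{closed} set $V$ is not the definition of local maximality: you need an \emph{open} neighborhood $U'$ of $\Lambda'$ with $\Lambda' = \bigcap_n \widetilde{G}^n(\overline{U'})$. This would follow if $\Lambda' \subset \mathrm{int}(V)$, but nothing in your construction guarantees that $\Lambda'$ avoids $\partial V$. Equivalently, your $\Lambda'$ need not have local product structure: for $x,y\in\Lambda'$ close, the bracket $[x,y]=W^s_{loc}(x)\cap W^u_{loc}(y)$ has no reason to lie in $\Lambda'$, since its orbit may exit $V$. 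This is not a technicality that can be patched by shrinking or enlarging $V$: the paper explicitly cites \cite{F06}, where Fisher constructs compact hyperbolic sets $\Lambda$ such that \emph{no} neighborhood of $\Lambda$ contains a locally maximal hyperbolic set between $\Lambda$ and the neighborhood. Your cone-field argument, being completely general, would prove the lemma for any diffeomorphism, contradicting these examples.

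The paper's proof uses the specific structure of the Katok map to sidestep this. It shows $\Lambda$ has topological dimension zero, builds a finite cross-section $S$ of $su$-rectangles and a return map $\mathcal{F}$, and covers $\Lambda\cap S$ by a finite disjoint family $\mathscr{K}$ of small $su$-rectangles with boundaries in $W^{s}(\underaccent{\bar}{0})\cup W^{u}(\underaccent{\bar}{0})$. Admissible bi-infinite sequences in a refinement $\mathscr{K}_N$ form a subshift of finite type, and shadowing (available via the conjugacy to $f_A$) gives an injective continuous coding map onto a set $\Lambda^N$. The crucial point is that local product structure is \emph{inherited from the shift space} through this coding, so $\Lambda_*^N = \bigcup_{i=0}^{l}\widetilde{G}^i(\Lambda^N)$ is automatically locally maximal. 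That is exactly the property your intersection construction cannot supply.
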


\begin{proof}
First we make the observation that any compact $\widetilde{G}$-invariant hyperbolic set $\Lambda^*$ has topological dimension zero. This is because, the points on the same stable (unstable) leaf have the same forward (resp. backward) Lyapunov exponents. Therefore points on $W^s(\underaccent{\bar}{0})\cup W^u(\underaccent{\bar}{0})$ are disjoint from $\Lambda^*$. Moreover, since $W^s$ and $W^u$ are both dense and lie in small stable (unstable) cones, any two points in $\Lambda^*$ could be isolated using four boundaries of a su-rectangle, which is defined to be a closed rectangle formed by intersecting two pairs of segments in stable and unstable leaves of $W^s(\underaccent{\bar}{0})$ and $W^u(\underaccent{\bar}{0})$. In particular, this shows that $\Lambda^*$ is totally disconnected, thus has topological dimension equal to $0$.

Since $\Lambda$ is hyperbolic, by structural stability we might assume $U$ to be small enough so that any $\widetilde{G}$-invariant set contained in $U$ is also hyperbolic. The idea is to construct $\Lambda'$ as the image of a subshift of finite type under a continuous and injective map, thus inherits the natural local product structure property from shift space and is $\widetilde{G}$-invariant and compact.

We first construct $S\subset \mathbb{T}^2$ as a finite collection of disjoint closed su-rectangles. We require the union of rectangles to intersect with any orbit segments of fixed length $l\in \mathbb{N}$. This is always possible as we could simply cover the perturbed neighborhood of the origin using one su-rectangle and the rest follows from $\widetilde{G}$ being a linear toral automorphism there. For any $x\in S$, $\tau(x)\geq 1$ is defined to be the first return time to $S$ and the return map is defined to be $\mathcal{F}(x):=\widetilde{G}^{\tau(x)}(x)$. We will construct a shift space using $\mathcal{F}$.

Together with $\mathcal{F}$ we will also construct a sequence of subsets of $S$ on which $\mathcal{F}$ will be acting on. We claim there is a collection of su-rectangles $\mathscr{K}$ such that:

\begin{enumerate}
    \item The collection $\mathscr{K}$ is finite;
    \item The sets in $\mathscr{K}$ are compact and mutually disjoint;
    \item Each set in $\mathscr{K}$ is contained in a single su-rectangle of $S$ and is itself an su-rectangle with sufficiently small diameter (at a scale the shadowing process is feasible and the shadowing orbit is contained in $U$, see the later discussion for details);
    \item $\mathcal{F}$ is smooth on each set in $\mathscr{K}$;
    \item Each $K\in \mathscr{K}$ contains at least one point of $\Lambda\cap S$ and $\Lambda\cap S$ is contained in $\bigcup_{K\in \mathscr{K}}\text{Int}(K)$.
\end{enumerate}

Let's briefly explain on why it is possible to construct $\mathscr{K}$ satisfying all these properties. We have seen that $\Lambda$ is disjoint from the stable and unstable leaves of origin. As $\Lambda$ is closed, for any closed su-rectangle $\mathscr{R}$ that covers $\Lambda \cap S$ we can further find out a collection of smaller closed su-rectangles which is contained in $\mathscr{R}$ and still contains $\Lambda \cap S$ by cutting through the stable and unstable leaves of origin and shrink the boundary of the smaller su-rectangles. By repeating this process, we are able to get a collection of disjoint closed su-rectangles $\mathscr{R}_n$ such that $\Lambda \cap S\subset \bigcup_{R_n\in \mathscr{R}_n}R_n$ and each $R_n\in \mathscr{R}_n$ is a subset of some $R_{n-1}\in \mathscr{R}_{n-1}$. We can make the diameter of su-rectangles in $\mathscr{R}_n$ to be arbitrarily small as $W^s(\underaccent{\bar}{0})$ and $W^u(\underaccent{\bar}{0})$ are dense. We also know from above that the boundary of any su-rectangle does not intersect $\Lambda$, the union of interior of all such $R_n$ contains $\Lambda \cap S$, forming an open cover. The $\mathscr{K}$ is thus defined to be the minimized finite open cover derived from $\text{Int}(R_n)$ where $R_n\subset \mathscr{R}_n$ and $n$ is chosen to be sufficiently large such that the diameter of $R_n$ is sufficiently small.

We are using elements from $\mathscr{K}$ to make the symbols in the target shift space. Following \cite{BG14} we define $\mathscr{K}_N$ to be the collection of sets with the form
$\bigcap_{j=-N}^{j=N}\mathcal{F}^{-i}K_j,$
where $K_j\in \mathscr{K}$.

Using elements in $\mathscr{K}_N$ will allow us to construct a natural shift space. As in \cite{BG14}, A bi-infinite sequence $\{K_i^N\}_{i=-\infty}^{\infty}$ in $\mathscr{K}_N$ is called $N$-admissible if for any $i\in \mathbb{Z}$, there is some $x_i\in K_i^N\cap \Lambda$ such that $\mathcal{F}(x_i)\in K_{i+1}^N\cap \Lambda$. For each $N$ we notice that the set of all $N$-admissible sequences is a subshift of finite type as all forbidden blocks are of length two and the choice on symbols is finite. It is also proved in \cite{BG14}, Lemma 4.4 that the diameter of $\mathscr{K}_N$ decreases to $0$ as $N\rightarrow \infty$. Therefore, if $N$ is made large enough, the sequence of points $\{x_i^N\}_{i=-\infty}^{\infty}$ with $x_i^N\in K_i^N$ for all $i\in \mathbb{Z}$, will become a pseudo-orbit that can be shadowed by an unique orbit that stays close to $\Lambda\cap S$ all the time. Here the shadowing property of $\widetilde{G}$ (thus $\mathcal{F}$) at all scales comes from the homeomorphic conjugacy of $\widetilde{G}$ to linear toral automorphism $f_A$, which is Anosov and has shadowing property at all scales. This shadowing only depends on the sequence and is independent of the choice of $\{x_i^N\}$ by expansiveness of $\widetilde{G}$. Therefore, the shadowing map $\Phi^N$ from $N$-admissible sequence to the shadowing point is well-defined and obviously continuous. Write $\Lambda^N$ to be the image of $\Phi^N$. By making N large and the diameter of elements in $\mathscr{K}$ sufficiently small, we can have $\Phi^N$ to be injective and $\Lambda^N$ to be contained in $U$. 

Finally let's see how $\Lambda^N_{*}:=\bigcup_{i=0}^{l}\widetilde{G}^i(\Lambda^N)$, which is the union of all the orbits passing through $\Lambda^N$, gives the required basic set. First of all we have the $\mathcal{F}$-orbit of $\Lambda\cap S$ to be contained in $\Lambda^N$ as the real orbit always shadows itself. Since $\bigcup_{j=1}^{l}\widetilde{G}^j(S)=\mathbb{T}^2$, we have $\Lambda\subset \Lambda^N_{*}$. For the same reason we could make diameter of $\mathscr{K}$ to be small enough so that $\widetilde{G}^i(\Lambda^N)\subset U$ for all $0\leq i \leq l$, which makes $\Lambda^N_{*}\subset U$. As $\Lambda^N$ is the image of a compact set under continuous map $\Phi^N$, it is compact, which makes $\Lambda^N_{*}$ compact. $\Lambda^N_{*}$ is obviously $\widetilde{G}$-invariant and is hyperbolic by the earlier restriction on $U$. Finally, Since $\Phi^N$ is bijective and continuous, $\Lambda^N$ inherits the natural local product structure from shift space, so does $\Lambda^N_{*}$. In conclusion, all the required results are satisfied and $\Lambda^N_{*}$ is the desired $\Lambda'$ we are looking for, given N large enough and diameter of $\mathscr{K}$ small enough.
\end{proof}
Now we state the second essential lemma about gluing the basic sets together. This is analogous to Proposition 5.3 in \cite{BG14}.

\begin{lemma}
Given two basic sets $\Lambda'$ and $\Lambda''$ in $\mathbb{T}^2$, there exists a basic set $\Lambda'''\subset \mathbb{T}^2$ such that $\Lambda' \cup \Lambda'' \subset \Lambda'''$.
\end{lemma}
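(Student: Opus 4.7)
Since $\Lambda'$ and $\Lambda''$ are hyperbolic $\widetilde G$-invariant sets, neither contains the neutral fixed point $\underaccent{\bar}{0}$. Set $\delta_0 := \tfrac13 \min\{d(\Lambda',\underaccent{\bar}{0}),\, d(\Lambda'',\underaccent{\bar}{0})\}>0$ and $V := B(\underaccent{\bar}{0},\delta_0)$. Any compact $\widetilde G$-invariant set disjoint from $V$ is automatically uniformly hyperbolic, since the sole obstruction to uniform hyperbolicity of $\widetilde G$ is the neutral behavior at $\underaccent{\bar}{0}$ (Proposition 3.1(3)) and the expansion/contraction rates are continuous and strictly bounded away from $1$ off any small neighborhood of the origin. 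The plan is to construct a compact, topologically transitive, $\widetilde G$-invariant set $\mathscr L \subset \mathbb{T}^2\setminus V$ containing $\Lambda'\cup\Lambda''$, and then invoke Lemma 9.5 together with Smale's spectral decomposition to upgrade $\mathscr L$ to a basic set.

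The set $\mathscr L$ will be built from heteroclinic orbits. First pick $p'\in\Lambda'$ and $p''\in\Lambda''$ whose orbits are dense in the respective basic sets. Because $\widetilde G$ is conjugate to the transitive Anosov $f_A$ via a homeomorphism that carries the stable/unstable foliations of $f_A$ onto those of $\widetilde G$ (Proposition 3.1 (1) and (2)), the leaves $W^u(p')$ and $W^s(p'')$ are each topologically dense in $\mathbb{T}^2$; combined with the local product structure of Proposition 3.3 this forces $W^u(p')\cap W^s(p'')$ to be nonempty and in fact dense in appropriate local stable/unstable segments via the $\lambda$-lemma. Choose $q_1$ in this intersection and symmetrically $q_2\in W^u(p'')\cap W^s(p')$. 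The orbits $\mathcal O(q_i)$ converge exponentially to the relevant basic sets in both forward and backward time, so outside some finite window $|n|\le N_0$ all iterates $\widetilde G^n(q_i)$ lie in $\mathbb{T}^2\setminus V$; avoiding $V$ on the remaining finite window is an open constraint on $q_i$, which can be arranged by replacing $q_i$ with a nearby point of the dense intersection.

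Set $\mathscr L:=\Lambda'\cup\Lambda''\cup\overline{\mathcal O(q_1)\cup\mathcal O(q_2)}$. This is compact, $\widetilde G$-invariant, contained in $\mathbb{T}^2\setminus V$, and hence uniformly hyperbolic; topological transitivity of $\widetilde G|_{\mathscr L}$ follows from the transitivity of each of $\Lambda'$, $\Lambda''$ together with the heteroclinic cycle $\Lambda'\leadsto\Lambda''\leadsto\Lambda'$ supplied by $q_1,q_2$. Applying Lemma 9.5 inside a neighborhood $U$ of $\mathscr L$ with $\overline U\subset\mathbb{T}^2\setminus B(\underaccent{\bar}{0},\delta_0/2)$ produces a compact locally maximal hyperbolic set $\hat\Lambda\supset\mathscr L$; by Smale's spectral decomposition $\hat\Lambda$ splits into a finite disjoint union of transitive basic sets, and the one containing the transitive set $\mathscr L$ is the required $\Lambda'''$. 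The main difficulty is the construction of the heteroclinic connections $q_1,q_2$ whose entire trajectories avoid a neighborhood of $\underaccent{\bar}{0}$; this rests on the density of transverse heteroclinic intersections together with the finiteness of the transitional window, which together render the constraint ``orbit disjoint from $V$'' both open and satisfiable on a dense set of candidate heteroclinic points.
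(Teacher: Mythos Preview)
Your approach---build a heteroclinic cycle between $\Lambda'$ and $\Lambda''$, take its closure, apply Lemma~9.5, then extract a single piece via spectral decomposition---is exactly the paper's strategy. Two points need repair, however.

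First, the detour through a fixed neighborhood $V$ of $\underaccent{\bar}{0}$ is both unnecessary and not justified as written. Your ``open constraint on a finite window'' argument does not work: the transverse intersection $W^u(p')\cap W^s(p'')$ is a discrete set, so one cannot perturb $q_i$ continuously within it, and the window $N_0$ changes with each new choice of $q_i$. Fortunately none of this is needed. Since $\underaccent{\bar}{0}$ is fixed and $q_i$ is heteroclinic between $\Lambda'$ and $\Lambda''$, the orbit of $q_i$ never meets $\underaccent{\bar}{0}$; hence the compact set $\mathscr L$ excludes $\underaccent{\bar}{0}$, and because $\|D\widetilde G|_{E^u(x)}\|>1$ and $\|D\widetilde G|_{E^s(x)}\|<1$ for every $x\neq\underaccent{\bar}{0}$, compactness alone gives uniform hyperbolicity of $\mathscr L$. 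Lemma~9.5 then applies with any neighborhood $U$ (its proof shrinks $U$ internally), so no control on $\overline U$ relative to $\underaccent{\bar}{0}$ is required.

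Second, $\widetilde G|_{\mathscr L}$ is \emph{not} topologically transitive: the points $q_1,q_2$ are isolated in $\mathscr L$ and lie on distinct orbits, so small relative neighborhoods of them can never be connected by an orbit inside $\mathscr L$. What you actually need is that $\Lambda'$ and $\Lambda''$ fall into the same piece of the spectral decomposition of $\Omega(\widetilde G|_{\hat\Lambda})$. This is where the paper invokes the extra ingredient you omit: because $\hat\Lambda$ is locally maximal and contains the cycle $q_1,q_2$, shadowing the concatenated pseudo-orbit $\Lambda'\to\Lambda''\to\Lambda'$ produces genuine orbits in $\hat\Lambda$ returning arbitrarily close to their starting points, so that in fact $\mathscr L\subset\Omega(\widetilde G|_{\hat\Lambda})$ and $\Lambda',\Lambda''$ are forced into a single transitive component. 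That component is the desired $\Lambda'''$.
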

\begin{proof}
For any $x\in \Lambda'$ and any open neighborhood $U$ of $x$, $\{\widetilde{G}^i(U)\}_{i\in \mathbb{Z}}$ is an open cover of $\Lambda'$ as $\widetilde{G}$ acts transitively. By compactness of $\Lambda'$ and the fact that $\widetilde{G}^i(U) \cap \Lambda'$ contains $\widetilde{G}^i(x)$ for each $i\in \mathbb{Z}$, thus being non-empty, we know there is some $i\in \mathbb{Z}$ such that $U\cap \widetilde{G}^i(U) \cap \Lambda'\neq \emptyset$. This indicates that $\Omega(\widetilde{G}\vert_{\Lambda'} )=\Lambda'$, and for $\Lambda''$ is similar. Since $\widetilde{G}$ is conjugate to a linear automorphism via homeomorphism, $\Omega(\widetilde{G})=\mathbb{T}^2$. As $\widetilde{G}$ acts transitively over $\mathbb{T}^2$, $\Lambda'$ and $\Lambda''$, from Theorem 5.10 in \cite{Wal82} we know the action is also one-sided transitive. Together with local product structure over $\mathbb{T}^2$, $\Lambda'$ and $\Lambda''$, we can find $v'\in \Lambda'$, $v''\in \Lambda''$ and $w\in \mathbb{T}^2$ such that the forward and backward orbits of $v'$, $v''$ and $w$ are both dense in $\Lambda'$, $\Lambda''$ and $\mathbb{T}^2$.

We claim that we can find $w''\in \mathbb{T}^2$ such that the orbit of $w'$ is forward asymptotic to the orbit of $v''$ and backward asymptotic to the orbit of $v'$. As the orbit of $w$ is backward dense in $\mathbb{T}^2$, we can find some $i>0$ such that $d(\widetilde{G}^{-i}(w),v')<\epsilon$, where $\epsilon$ is as in the first seven sections. We can thus use a local product structure of $\widetilde{G}$ at scale $\epsilon$ to find $w'$, which is backward asymptotic to orbit of $v'$ and forward asymptotic to orbit of $w$. Since $w$ is forward dense, there is some $j>0$ such that $d(\widetilde{G}^j(w),v'')<\epsilon$, so is $\widetilde{G}^{i+j}(w')$. We then use local product structure between $\widetilde{G}^{i+j}(w')$ and $v''$ to get $w''$, which is forward asymptotic to $v''$ and backward asymptotic to the orbit of $\widetilde{G}^{i+j}(w')$, thus backward asymptotic to the orbit of $v'$. 

Similarly we can find $w''\in \mathbb{T}^2$ which is forward asymptotic to the orbit of $v'$ and backward asymptotic to the orbit of $v''$. Write $\Lambda$ as the union of orbit of $w''$, orbit of $w'$, $\Lambda'$ and $\Lambda''$. $\Lambda$ is compact as the orbit of $w''$ and $w'$ are forward and backward asymptotic to locally maximal sets. It is obviously $\widetilde{G}$-invariant and hyperbolic. We can then cover it using a compact $\widetilde{G}$-invariant locally maximal hyperbolic set $\widetilde{\Lambda}$. We notice that $\widetilde{G}$ acts on $\Lambda$ transitively and $\Lambda\subset \Omega(\widetilde{G}\vert_{\widetilde{\Lambda}})$ by shadowing orbit of $w'$ and $w''$ using a loop between $\Lambda'$ and $\Lambda''$. The shadowing orbit is in $\widetilde{\Lambda}$ as $\widetilde{\Lambda}$ is locally maximal. Therefore, $\Lambda$ must lie in one component of the spectral decomposition for $\Omega(\widetilde{G}\vert_{\widetilde{\Lambda}})$, which is the desired basic set covering $\Lambda'$ and $\Lambda''$.
\end{proof}
Using Lemma 9.5 and 9.6, we are able to prove lemma Proposition 9.4. We begin the construction of $\widetilde{\Lambda}_m$ with defining an increasingly nested sequence $\{\Lambda_n\}_{n\in \mathbb{N}}$ as $\Lambda_n=\overline{\text{Per}(\mathbb{T}^2\setminus B(\frac{1}{10n}))}$, where $\text{Per}(E)$ is the set of periodic orbits of $\widetilde{G}$ whose entire orbit lies in $E$ for $E\subset \mathbb{T}^2$ and $B(\delta)$ is the open ball centered at $\underaccent{\bar}{0}$ with radius $\delta>0$. It is obvious that $\{\Lambda_n\}$ forms an increasingly nested sequence of compact $\widetilde{G}$-invariant hyperbolic set. We know from Lemma 9.5 that there is some $\Lambda_n'$ containing $\Lambda_n$ which is locally maximal, compact, $\widetilde{G}$-invariant and hyperbolic. Since $\Omega(\widetilde{G}\vert_{\Lambda_n'})$ contains all the periodic points in $\Lambda_n'$ and is closed, it contains $\Lambda_n$. We can then use the spectral decomposition for $\Omega(\widetilde{G}\vert_{\Lambda_n'})$ and apply Lemma 9.6 multiple times to find a basic set $\widetilde{\Lambda}_n$ which eventually covers $\Lambda_n$. As $\{\Lambda_n\}$ is increasingly nested, we can also choose $\{\widetilde{\Lambda}_n\}$ to be increasingly nested by applying Lemma 9.6. From shadowing lemma for basic sets and transitivity of $\widetilde{G}$, we know for any basic set $\Lambda$ we have $\Lambda\subset \bigcup_{n\in \mathbb{N}}\widetilde{\Lambda}_n$. Since each $\widetilde{\Lambda}_n$ is locally maximal, there is a nested sequence of open sets $U_n$ such that $\widetilde{\Lambda}_n=\bigcap_{i\in \mathbb{Z}}\widetilde{G}^i(U_n)$. In particular $\widetilde{\Lambda}_n\subset U_n$ for all $n$. Therefore we have $\Lambda\subset \bigcup_{n\in \mathbb{N}}U_n$. Since $\Lambda$ is compact, there is some $m\in \mathbb{N}$ such that $\Lambda\subset U_m$. It follows from $\widetilde{G}$-invariance of $\Lambda$ that $\Lambda\subset \widetilde{\Lambda}_m$, which concludes Proposition 9.4, thus Proposition 9.3, which in turn proves Proposition 9.2.

\subsection{Proof of Theorem 1.4}

Finally we are at the stage of proving Theorem 1.4, which is our main theorem in the multifractal analysis of the Katok map $\widetilde{G}$. 

From Proposition 9.3 we know that for for any $\alpha\in (\alpha_1,0)$, there is some $N_{\alpha}\in \mathbb{N}$ such that $L(-\alpha)\cap \widetilde{\Lambda}_n\neq \emptyset$ for $n\geq N_{\alpha}$. Since $\{\widetilde{\Lambda}_n\}_{n\in \mathbb{N}}$ is increasingly nested, if we write $\dim_n(-\alpha)$ as the Hausdorff dimension of $L(-\alpha)\cap \widetilde{\Lambda}_n$ for all $n\geq N_{\alpha}$, then we immediately get $d_H(-\alpha)\geq \lim_{n\rightarrow \infty}\dim_n(-\alpha)$. Recall we use $l_{\alpha}$ to represent the supporting line to $\mathscr{P}$ with slope being $\alpha$ and $l_{\alpha}$ is well-defined for any $\alpha\in (\alpha_1,0]$. Similarly we use $l_{\alpha}^n$ to represent the supporting line to $\mathscr{P}_{\widetilde{\Lambda}_n}$. By (9.1) in the uniformly hyperbolic version, we have $l_{\alpha}^n$ to be well-defined for all $n\geq N_{\alpha}$ and $\alpha\in (\alpha_1,0)$, which is tangent to $\mathscr{P}_{\widetilde{\Lambda}_n}$ at the point $(t_{\alpha}^n,\mathscr{P}_{\widetilde{\Lambda}_n}(t_{\alpha}^n))$.

Notice that $\mathscr{P}'_{\widetilde{\Lambda}_n}(t_{\alpha}^n)=\int \varphi^{geo}d\mu_{\alpha}^n=\alpha$, where $\mu_{\alpha}^n$ is the unique equilibrium state of $\varphi_{t_{\alpha}^n}$ with $\widetilde{G}$ over $\widetilde{\Lambda}_n$. In particular, $\mu_{\alpha}^n$ is ergodic, $\chi(\mu_{\alpha}^n)=-\alpha$ and $h(\mu_{\alpha}^n)=\mathscr{E}_{\widetilde{\Lambda}_n}(\alpha)$, where $\mathscr{E}_{\widetilde{\Lambda}_i}(\alpha)=\inf_{t\in \mathbb{R}}(\mathscr{P}_{\widetilde{\Lambda}_i}(t)-t\alpha)$ is the Legendre transform of $\mathscr{P}_{\widetilde{\Lambda}_i}(t)$ at $\alpha$.  Since $\widetilde{G}$ is area-preserving, the Main Theorem in \cite{You82} tells us that $\dim_H(\mu_{\alpha}^n)=\frac{2h(\mu_{\alpha}^n)}{-\alpha}$, where $\dim_H(\mu_{\alpha}^n):=\inf\{\dim_H(E): \mu_{\alpha}^n(E)=1\}$. Since $\mu_{\alpha}^n$ is supported on $L(-\alpha)\cap \widetilde{\Lambda}_n$, we immediately have the following result:

\begin{lemma}
For any $\alpha\in (\alpha_1,0)$, there exists $N_{\alpha}\in \mathbb{N}$ such that for all $n\geq N_{\alpha}$, $dim_n(-\alpha)\geq \frac{2\mathscr{E}_{\widetilde{\Lambda}_n}(\alpha)}{-\alpha}$ and $d_H(-\alpha)\geq \lim_{n\rightarrow \infty}\frac{2\mathscr{E}_{\widetilde{\Lambda}_n}(\alpha)}{-\alpha}$ where $\dim_n(-\alpha)$ is the Hausdorff dimension of $L(-\alpha)\cap \widetilde{\Lambda}_n$ for all $n\geq N_{\alpha}$ and $d_H(-\alpha)$ is the one for $L(-\alpha)$.
\end{lemma}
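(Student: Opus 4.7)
The plan is to exhibit, for each sufficiently large $n$, an explicit ergodic $\widetilde{G}$-invariant measure that lives on $L(-\alpha)\cap \widetilde{\Lambda}_n$ and whose Hausdorff dimension equals $\frac{2\mathscr{E}_{\widetilde{\Lambda}_n}(\alpha)}{-\alpha}$, and then to pass to the limit using the nested structure of $\{\widetilde{\Lambda}_n\}$. The candidate measure is the unique equilibrium state $\mu_\alpha^n$ for $\varphi_{t_\alpha^n}$ restricted to the basic set $\widetilde{\Lambda}_n$, where $t_\alpha^n$ is the tangency parameter produced in the proof of Proposition 9.3.

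First, fix $\alpha\in (\alpha_1,0)$ and apply Proposition 9.3 to obtain $N_\alpha\in \mathbb{N}$ such that for every $n\geq N_\alpha$ the mean value theorem argument yields $t_\alpha^n\in \mathbb{R}$ with $\mathscr{P}'_{\widetilde{\Lambda}_n}(t_\alpha^n)=\alpha$. Since $\widetilde{\Lambda}_n$ is uniformly hyperbolic, the classical thermodynamic formalism supplies a unique equilibrium state $\mu_\alpha^n$ for $\varphi_{t_\alpha^n}$ on $\widetilde{\Lambda}_n$ that is ergodic and hyperbolic. The identity (9.1) in its uniformly hyperbolic version gives $\int \varphi^{geo}\,d\mu_\alpha^n=\alpha$, so by ergodicity $\chi^+(\mu_\alpha^n)=-\alpha$ and $\mu_\alpha^n$ is concentrated on $L(-\alpha)\cap\widetilde{\Lambda}_n$. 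Combined with the variational identity $h(\mu_\alpha^n)+t_\alpha^n\int\varphi^{geo}\,d\mu_\alpha^n=\mathscr{P}_{\widetilde{\Lambda}_n}(t_\alpha^n)$, this forces $h(\mu_\alpha^n)=\mathscr{E}_{\widetilde{\Lambda}_n}(\alpha)$ by the definition of the Legendre transform.

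Next, I invoke Young's dimension formula from the Main Theorem of \cite{You82}: for a hyperbolic ergodic measure of a $C^{1+\alpha}$ surface diffeomorphism, $\dim_H(\mu)=h_\mu(f)\bigl(\tfrac{1}{\chi^+(\mu)}+\tfrac{1}{|\chi^-(\mu)|}\bigr)$. Area-preservation of $\widetilde{G}$ on $\mathbb{T}^2$ gives $\chi^+(\mu_\alpha^n)+\chi^-(\mu_\alpha^n)=\int \log|\det D\widetilde{G}|\,d\mu_\alpha^n=0$, hence $\chi^-(\mu_\alpha^n)=\alpha$ and Young's formula simplifies to $\dim_H(\mu_\alpha^n)=\frac{2h(\mu_\alpha^n)}{-\alpha}=\frac{2\mathscr{E}_{\widetilde{\Lambda}_n}(\alpha)}{-\alpha}$. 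Since $\mu_\alpha^n(L(-\alpha)\cap\widetilde{\Lambda}_n)=1$, the definition $\dim_H(\mu):=\inf\{\dim_H E:\mu(E)=1\}$ yields $\dim_n(-\alpha)\geq \dim_H(\mu_\alpha^n)=\frac{2\mathscr{E}_{\widetilde{\Lambda}_n}(\alpha)}{-\alpha}$. Because $\widetilde{\Lambda}_n\subset \widetilde{\Lambda}_{n+1}$, monotonicity of Hausdorff dimension gives $d_H(-\alpha)\geq \lim_{n\to\infty}\dim_n(-\alpha)\geq \lim_{n\to\infty}\frac{2\mathscr{E}_{\widetilde{\Lambda}_n}(\alpha)}{-\alpha}$.

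The main obstacle is justifying each application of the uniformly hyperbolic thermodynamic formalism (uniqueness of $\mu_\alpha^n$, smoothness of $\mathscr{P}_{\widetilde{\Lambda}_n}$, and the derivative identity $\mathscr{P}'_{\widetilde{\Lambda}_n}(t_\alpha^n)=\int\varphi^{geo}\,d\mu_\alpha^n$) and confirming that Young's dimension formula genuinely applies: what is needed is $C^{1+\alpha}$ smoothness and hyperbolicity of the ergodic measure, both of which are inherited from $\widetilde{G}$ being $C^\infty$ on a neighborhood of $\widetilde{\Lambda}_n$ (we are safely away from the origin for $n\geq N_\alpha$ by construction of the basic sets in Proposition 9.4) together with the uniform hyperbolicity of $\widetilde{\Lambda}_n$. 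All other steps are bookkeeping about Legendre transforms and limits.
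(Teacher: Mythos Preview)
Your proof is correct and follows essentially the same route as the paper: produce the equilibrium state $\mu_\alpha^n$ on the basic set $\widetilde{\Lambda}_n$ at the tangency parameter $t_\alpha^n$, read off $\chi(\mu_\alpha^n)=-\alpha$ and $h(\mu_\alpha^n)=\mathscr{E}_{\widetilde{\Lambda}_n}(\alpha)$ from the derivative identity, apply Young's dimension formula using area-preservation, and pass to the limit via the nested structure. One small inaccuracy in your final paragraph: the basic sets $\widetilde{\Lambda}_n$ are \emph{not} uniformly bounded away from the origin as $n$ grows (by construction $\Lambda_n=\overline{\text{Per}(\mathbb{T}^2\setminus B(\tfrac{1}{10n}))}$, so they approach it), but this is harmless since $\widetilde{G}$ is $C^\infty$ on all of $\mathbb{T}^2$ and Young's theorem only requires a hyperbolic ergodic measure for a $C^{1+\alpha}$ surface diffeomorphism.
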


From Lemma 9.7, we know that in order to get the lower bound of $d_H(-\alpha)$ in Theorem 1.4, it suffices to prove that $\lim_{n\rightarrow \infty}\mathscr{E}_{\widetilde{\Lambda}_n}(\alpha)=\mathscr{E}(\alpha)$ for all $\alpha \in (\alpha_1,0)$, which directly follows from the following lemma:

\begin{lemma}
For any $\alpha\in (\alpha_1,0)$ and any $t\in \mathbb{R}$, $l_{\alpha}^n(t)$ increases to $l_{\alpha}(t)$. In other words, the supporting line of $\mathscr{P}_{\widetilde{\Lambda}_n}$ with slope $\alpha$ converges monotonically to the supporting line of $\mathscr{P}$ with the same slope. In particular, $\lim_{n\rightarrow \infty}\mathscr{E}_{\widetilde{\Lambda}_n}(\alpha)=\mathscr{E}(\alpha)$.
\end{lemma}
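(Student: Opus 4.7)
Since the two supporting lines share the common slope $\alpha$, namely $l_\alpha(t) = \alpha t + \mathscr{E}(\alpha)$ and $l_\alpha^n(t) = \alpha t + \mathscr{E}_{\widetilde{\Lambda}_n}(\alpha)$, the claim that $l_\alpha^n(t)$ increases to $l_\alpha(t)$ for every $t \in \mathbb{R}$ reduces to the single numerical statement $\mathscr{E}_{\widetilde{\Lambda}_n}(\alpha) \nearrow \mathscr{E}(\alpha)$. The monotonicity and the upper bound are immediate from Proposition 9.3: the pointwise inequalities $\mathscr{P}_{\widetilde{\Lambda}_n}(t) \leq \mathscr{P}_{\widetilde{\Lambda}_{n+1}}(t) \leq \mathscr{P}(t)$ survive subtraction of $t\alpha$ and infimum over $t$, which yields $\mathscr{E}_{\widetilde{\Lambda}_n}(\alpha) \leq \mathscr{E}_{\widetilde{\Lambda}_{n+1}}(\alpha) \leq \mathscr{E}(\alpha)$. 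The real content is the reverse estimate $\liminf_n \mathscr{E}_{\widetilde{\Lambda}_n}(\alpha) \geq \mathscr{E}(\alpha)$.

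The plan is to show that the minimizers of $g_n(t) := \mathscr{P}_{\widetilde{\Lambda}_n}(t) - t\alpha$ are trapped in a fixed compact interval once $n$ is large, and then apply Dini's theorem. Let
\[
\chi_{\min}(n) := \inf\{\chi^+(\mu) : \mu \in \mathcal{M}_{\widetilde{G}}(\widetilde{\Lambda}_n)\}, \qquad \chi_{\max}(n) := \sup\{\chi^+(\mu) : \mu \in \mathcal{M}_{\widetilde{G}}(\widetilde{\Lambda}_n)\},
\]
and test the variational principle on $\widetilde{\Lambda}_n$ against periodic measures approaching these extremes to obtain
\[
g_n(t) \geq -t(\chi_{\min}(n) + \alpha) \text{ for } t \geq 0, \qquad g_n(t) \geq -t(\chi_{\max}(n) + \alpha) \text{ for } t \leq 0.
\]
By the periodic-orbit construction sketched in $\mathsection 8.3$, for $n$ large $\widetilde{\Lambda}_n$ contains periodic orbits spending an arbitrarily large proportion of their time near the neutral fixed point, so $\chi_{\min}(n) \to 0$; and since for $n$ large $\widetilde{\Lambda}_n$ also contains periodic orbits of $f_A$ lying entirely in the linear region, $\chi_{\max}(n) \to \log\lambda = -\alpha_1$. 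Combined with $\alpha \in (\alpha_1, 0)$, this yields $\chi_{\min}(n) + \alpha \leq \alpha/2 < 0$ and $\chi_{\max}(n) + \alpha \geq (\alpha - \alpha_1)/2 > 0$ for all $n$ beyond some $N_\alpha$, whence $g_n(t) > \mathscr{E}(\alpha) \geq \mathscr{E}_{\widetilde{\Lambda}_n}(\alpha)$ whenever $t > M_1 := 2\mathscr{E}(\alpha)/|\alpha|$ or $t < -M_2 := -2\mathscr{E}(\alpha)/(\alpha - \alpha_1)$. Consequently every minimizer $t_n$ of $g_n$ lies in $[-M_2, M_1]$.

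Once the minimizers are corralled into a common compact set, the rest is routine convex analysis. Each $\mathscr{P}_{\widetilde{\Lambda}_n}$ is continuous and convex on $\mathbb{R}$ and, by Proposition 9.3, converges monotonically upward to the continuous function $\mathscr{P}$; Dini's theorem then gives uniform convergence on $[-M_2, M_1]$. Passing to any subsequence $t_{n_k} \to t^* \in [-M_2, M_1]$, uniform convergence together with continuity of $\mathscr{P}$ yields $\mathscr{P}_{\widetilde{\Lambda}_{n_k}}(t_{n_k}) \to \mathscr{P}(t^*)$, hence
\[
\lim_k \mathscr{E}_{\widetilde{\Lambda}_{n_k}}(\alpha) = \mathscr{P}(t^*) - t^* \alpha \geq \inf_{t \in \mathbb{R}}(\mathscr{P}(t) - t\alpha) = \mathscr{E}(\alpha).
\]
Every convergent subsequence having the same limit $\mathscr{E}(\alpha)$ (and monotonicity already giving a limit), we conclude $\mathscr{E}_{\widetilde{\Lambda}_n}(\alpha) \nearrow \mathscr{E}(\alpha)$, which translates line-by-line to $l_\alpha^n(t) \nearrow l_\alpha(t)$ for every $t \in \mathbb{R}$.

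The hard part is the boundedness of the minimizers, which genuinely uses the neutral fixed point through $\chi_{\min}(n) \to 0$. Without such control one could not rule out the scenario where the $t_n$ drift to $+\infty$ while the values $\mathscr{E}_{\widetilde{\Lambda}_n}(\alpha)$ stagnate strictly below $\mathscr{E}(\alpha)$, breaking the Dini step. This is exactly where the delicate periodic-orbit construction of $\mathsection 8.3$ pays off; the companion bound $\chi_{\max}(n) \to \log\lambda$ is comparatively easy, since any periodic orbit of $f_A$ avoiding the perturbed neighborhood lifts to a periodic orbit of $\widetilde{G}$ of Lyapunov exponent exactly $\log\lambda$, and these are automatically in $\widetilde{\Lambda}_n$ for $n$ large by the choice of $\Lambda_n = \overline{\mathrm{Per}(\mathbb{T}^2 \setminus B(1/(10n)))}$ in the construction of $\widetilde{\Lambda}_n$.
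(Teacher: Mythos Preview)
Your argument is correct, and it shares the decisive step with the paper's proof: restrict attention to a compact $t$-interval and apply Dini's theorem to the monotone pointwise convergence $\mathscr{P}_{\widetilde{\Lambda}_n}\nearrow\mathscr{P}$ there. The difference is in how that compact interval is produced.

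You manufacture a \emph{uniform} trapping interval $[-M_2,M_1]$ for the minimizers $t_n$ of $g_n$ by establishing the asymptotic bounds $\chi_{\min}(n)\to 0$ and $\chi_{\max}(n)\to -\alpha_1$, invoking the periodic-orbit construction of $\mathsection 8.3$ and the existence of $f_A$-periodic orbits in the linear region; you then run a subsequence argument on the $t_n$. The paper's route is shorter: it fixes a \emph{single} index $n\geq N_\alpha$ and takes the compact interval to be $[a_n,b_n]:=\{t:\mathscr{P}_{\widetilde{\Lambda}_n}(t)\leq l_\alpha(t)\}$, which is bounded precisely because $\alpha$ lies in the strict interior of the slope range of $\mathscr{P}_{\widetilde{\Lambda}_n}$---a fact already secured when $N_\alpha$ was produced in the paragraph after Proposition~9.3. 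Outside $[a_n,b_n]$ one has $\mathscr{P}_{\widetilde{\Lambda}_j}(t)\geq \mathscr{P}_{\widetilde{\Lambda}_n}(t)>l_\alpha(t)$ for all $j\geq n$ by monotonicity, so only the interior needs Dini; this immediately yields $\mathscr{P}_{\widetilde{\Lambda}_j}\geq l_\alpha-\delta$ everywhere, hence $l_\alpha^j\geq l_\alpha-\delta$, with no need to locate or pass to limits of minimizers. In effect you re-derive, in a quantitatively sharper form, information that the paper already had on the shelf, and then do a little extra bookkeeping; both reach the same conclusion.
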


\begin{proof}
Fix any $\alpha\in (\alpha_1,0)$, we have $N_{\alpha}$ as before. From convexity of $\mathscr{P}_{\widetilde{\Lambda}_n}(t)$, when $n\geq N_{\alpha}$, There is some closed interval $[a_n,b_n]$ such that $\mathscr{P}_{\widetilde{\Lambda}_n}(t)>l_{\alpha}(t)$ for t not in this interval. Meanwhile, by $\mathscr{P}_{\widetilde{\Lambda}_j}(t)\nearrow \mathscr{P}(t)$ as $j\rightarrow \infty$ and the continuity of both $\mathscr{P}_{\widetilde{\Lambda}_j}(t)$ and $\mathscr{P}(t)$, the convergence over $[a_n,b_n]$ is uniform by Dini's Theorem. Therefore, for any small $\delta>0$, there is some $N=N(\alpha,n,\delta)\geq n$ such that $\mathscr{P}_{\widetilde{\Lambda}_j}(t)\geq \mathscr{P}(t)-\delta\geq l_{\alpha}(t)-\delta$ for all $t\in [a_n, b_n]$ and $j\geq N$, which in return shows that $\mathscr{P}_{\widetilde{\Lambda}_j}(t)\geq l_{\alpha}(t)-\delta$ for $j\geq N$ and all $t$. In particular, it follows that $l_{\alpha}^j \geq l_{\alpha}-\delta$ for all $j\geq N$, which leads to the convergence from $\mathscr{E}_{\widetilde{\Lambda}_n}(\alpha)$ to $\mathscr{E}(\alpha)$ for all $\alpha\in (\alpha_1,0)$.
\end{proof}

In conclusion, the Hausdorff dimension estimate in Theorem 1.4 follows from Lemma 9.7 and 9.8.

\begin{proposition}
For any $\alpha\in (\alpha_1,0)$, we have $d_H(-\alpha)\geq \frac{2\mathscr{E(\alpha)}}{-\alpha}$. In particular, when $\alpha \in [\alpha_2,0)$, we have $d_H(-\alpha)=2$.
\end{proposition}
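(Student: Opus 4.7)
The plan is to observe that the first inequality in the proposition is already an immediate consequence of the two lemmas proved just above, while the identity $d_H(-\alpha)=2$ on $[\alpha_2,0)$ requires one additional explicit computation of $\mathscr{E}(\alpha)$. First I would combine Lemma 9.7 and Lemma 9.8: Lemma 9.7 gives $d_H(-\alpha)\geq \lim_{n\to\infty}\frac{2\mathscr{E}_{\widetilde{\Lambda}_n}(\alpha)}{-\alpha}$ for every $\alpha\in(\alpha_1,0)$, and Lemma 9.8 gives $\lim_{n\to\infty}\mathscr{E}_{\widetilde{\Lambda}_n}(\alpha)=\mathscr{E}(\alpha)$. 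Chaining these two limits delivers the general lower bound $d_H(-\alpha)\geq \frac{2\mathscr{E}(\alpha)}{-\alpha}$, so the first assertion requires no new work.

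For the second assertion, I would compute $\mathscr{E}(\alpha)$ explicitly on $[\alpha_2,0)$. Recall that $\mathscr{P}(t)=0$ for all $t\geq 1$ because of the neutral fixed point at the origin, and that $\mathscr{P}$ is convex with $D^{-}\mathscr{P}(1)=\alpha_2$ and $D^{+}\mathscr{P}(1)=0$. For any $\alpha\in[\alpha_2,0)$, the line $l_\alpha$ of slope $\alpha$ which supports $\mathscr{P}$ from below must therefore pass through the corner point $(1,0)$; consequently $l_\alpha(t)=\alpha(t-1)$ and so $\mathscr{E}(\alpha)=l_\alpha(0)=-\alpha$. Plugging this into the lower bound gives $d_H(-\alpha)\geq \frac{2(-\alpha)}{-\alpha}=2$, and the opposite inequality $d_H(-\alpha)\leq 2$ is automatic since $L(-\alpha)\subset\mathbb{T}^2$. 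Hence $d_H(-\alpha)=2$ on $[\alpha_2,0)$.

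I do not expect any genuine obstacle here, since the proposition is structured as a bookkeeping consequence of the harder geometric content of Section 9.3 (the nested approximation by basic sets) and the Young dimension estimate used in Lemma 9.7. The only subtle point is to justify that $l_\alpha$ indeed intersects the $y$-axis at $-\alpha$ on the whole interval $[\alpha_2,0)$; this reduces to verifying that $(1,0)$ lies on every such supporting line, which follows from the left/right derivative inequalities $D^{-}\mathscr{P}(1)=\alpha_2\leq \alpha\leq 0=D^{+}\mathscr{P}(1)$, i.e.\ $\alpha$ belongs to the subdifferential of $\mathscr{P}$ at $t=1$. Once that identification is in hand, the two claims of the proposition are immediate.
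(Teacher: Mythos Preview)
Your proposal is correct and matches the paper's approach: the paper likewise obtains the general lower bound by combining Lemma 9.7 and Lemma 9.8, and the identity $\mathscr{E}(\alpha)=-\alpha$ for $\alpha\in[\alpha_2,0)$ (via the supporting line through $(1,0)$) is exactly the computation carried out earlier in \S9.3, from which $d_H(-\alpha)=2$ follows since $L(-\alpha)\subset\mathbb{T}^2$.
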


It remains to show that $h(-\alpha)=\mathscr{E}(\alpha)$ for $\alpha\in (\alpha_1,0]$. By Proposition 9.1, it suffices to show the case where $\alpha\in [\alpha_2,0]$. 

Define $\chi(\mu):=\int{\varphi^{geo}d\mu}$, which is the average of Lyapunov exponents for every ergodic component in the decomposition of $\mu$. We notice that this definition does not cause ambiguity when $\mu$ is itself ergodic.

\begin{lemma}
For any $\alpha\in [\alpha_2,0]$, there is some $\mu_{\alpha}\in \mathcal{M}(\widetilde{G})$ such that $l_{\alpha}=h(\mu_{\alpha})-t\int{\varphi^{geo}d{\mu_{\alpha}}}$. In particular, $\chi(\mu_{\alpha})=-\alpha$ and $\mathscr{E}(\alpha)=h(\mu_{\alpha})$.
\end{lemma}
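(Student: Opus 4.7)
The plan is to exhibit $\mu_\alpha$ explicitly as a convex combination of Lebesgue measure $m$ and the Dirac mass $\delta_{\underaccent{\bar}{0}}$, leveraging the corner of $\mathscr{P}$ at $t=1$. Recall from the proof of Theorem~1.2 that $\mathscr{P}(t)=0$ for every $t\geq 1$ (the Ruelle inequality $h(\nu)\leq-\int\varphi^{geo}\,d\nu$ forces $\mathscr{P}(t)\leq 0$ on $[1,\infty)$, while $P_{\delta_{\underaccent{\bar}{0}}}(\varphi_t)=0$ gives the reverse bound). Hence the subdifferential of $\mathscr{P}$ at $t=1$ is the interval $[\alpha_2,0]$, and for each $\alpha\in[\alpha_2,0]$ the supporting line $l_\alpha$ is pinned at $(1,0)$ and has the explicit form $l_\alpha(t)=\alpha(t-1)$, so $\mathscr{E}(\alpha)=-\alpha$.

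Before writing down $\mu_\alpha$, I identify two canonical $\widetilde{G}$-invariant equilibrium states for $\varphi_1$. Lebesgue measure $m$ is SRB (as already exploited in $\mathsection 7$), so $h(m)=\lambda^+(m)=-\int\varphi^{geo}\,dm$ and therefore $P_m(\varphi_1)=0=\mathscr{P}(1)$. The Dirac $\delta_{\underaccent{\bar}{0}}$ trivially gives $P_{\delta_{\underaccent{\bar}{0}}}(\varphi_1)=0$. The key auxiliary identity I need is $\int\varphi^{geo}\,dm=\alpha_2$. To pin this down I take any weak-$*$ limit $\mu_\infty$ of $\mu_t$ as $t\to 1^-$: continuity of $\varphi^{geo}$ together with monotone convergence of the left derivatives $\mathscr{P}'(t)=\int\varphi^{geo}\,d\mu_t$ forces $\int\varphi^{geo}\,d\mu_\infty=\alpha_2$, while upper semicontinuity of entropy makes $\mu_\infty$ an equilibrium state for $\varphi_1$. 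Ruelle's inequality combined with the equality $h+\int\varphi^{geo}=0$ at $t=1$ forces any hyperbolic ergodic equilibrium state for $\varphi_1$ to be SRB, hence to equal $m$; together with $\delta_{\underaccent{\bar}{0}}$ being the only non-hyperbolic ergodic measure (Proposition~3.1(3)), this classifies the ergodic equilibrium states as $\{m,\delta_{\underaccent{\bar}{0}}\}$. Writing $\mu_\infty=cm+(1-c)\delta_{\underaccent{\bar}{0}}$ and comparing integrals, together with the supporting-line constraint $\int\varphi^{geo}\,dm\in[\alpha_2,0]$, forces $c=1$ and $\int\varphi^{geo}\,dm=\alpha_2$.

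With these ingredients I set $\mu_\alpha := s\,m + (1-s)\,\delta_{\underaccent{\bar}{0}}$ with $s:=\alpha/\alpha_2\in[0,1]$; this is $\widetilde{G}$-invariant, and affinity of entropy together with linearity of integration give $h(\mu_\alpha)=-s\alpha_2=-\alpha$ and $\int\varphi^{geo}\,d\mu_\alpha=s\alpha_2=\alpha$. Thus the measure-theoretic pressure line $t\mapsto h(\mu_\alpha)+t\int\varphi^{geo}\,d\mu_\alpha$ equals $\alpha(t-1)=l_\alpha(t)$, the averaged positive Lyapunov exponent of $\mu_\alpha$ is $-\alpha$, and $\mathscr{E}(\alpha)=h(\mu_\alpha)$. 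The only non-routine step is the identification $\int\varphi^{geo}\,dm=\alpha_2$ via the Ruelle/Pesin classification of ergodic equilibrium states of $\varphi_1$; everything else is bookkeeping using the corner structure of $\mathscr{P}$ and the affinity of entropy on invariant measures.
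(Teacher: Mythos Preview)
Your proof is correct and, at its core, follows the same two-step strategy as the paper: produce a measure realizing the supporting line at the endpoint $\alpha=\alpha_2$ via a weak-$*$ limit of the equilibrium states $\mu_t$ as $t\nearrow 1$, then interpolate with $\delta_{\underaccent{\bar}{0}}$ using affinity of entropy to reach intermediate $\alpha\in(\alpha_2,0]$.

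The difference is one of economy. The paper simply \emph{uses} the weak-$*$ limit $\mu'$ as $\mu_{\alpha_2}$: upper semicontinuity of entropy and continuity of $\varphi^{geo}$ already give $h(\mu')+\int\varphi^{geo}\,d\mu'=0$ and $\int\varphi^{geo}\,d\mu'=\alpha_2$, which is all the lemma requires. You instead insist on identifying $\mu_{\alpha_2}$ with Lebesgue measure $m$, and to do so you invoke the Ledrappier--Young characterization (Pesin's formula holds iff SRB) to classify the ergodic equilibrium states of $\varphi_1$ as $\{m,\delta_{\underaccent{\bar}{0}}\}$, then run a convexity argument to force $\mu_\infty=m$. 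This buys you the extra information that $\alpha_2=\int\varphi^{geo}\,dm=-h_m(\widetilde{G})$, which is nice but not needed for the lemma. Note also that the step ``SRB, hence to equal $m$'' silently uses uniqueness of the hyperbolic SRB measure; for the Katok map this is standard (area is ergodic and area-preserving, so any SRB coincides with $m$), but you should say so explicitly. Once you have $\mu_\infty$ with the right entropy and integral, the identification with $m$ is a detour---the paper's route is shorter.
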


\begin{proof}
We first show the above lemma holds for $\alpha=\alpha_2$. For $t\nearrow 1$, $\mathscr{P}(t)$ is $C^1$ and $\mathscr{P}'(t)=-\int{\varphi^{geo}}d\mu_t$. Let $\mu'$ be a weak*-limit of $\mu_t$. Since $\varphi^{geo}$ is continuous and $\widetilde{G}$ is expansive, we have $\mathscr{P}(1)=\limsup_{t\nearrow 1}\mathscr{P}(t)=\limsup_{t\nearrow 1}(h(\mu_t)-t\int{\varphi^{geo}d{\mu_t}})\leq h(\mu')-\int{\varphi^{geo}d{\mu'}}$. By variational principle, we have $0=\mathscr{P}(1)=h(\mu')-\int{\varphi^{geo}d{\mu'}}$. Meanwhile, $\chi(\mu')=\int{\varphi^{geo}d{\mu'}}=\lim_{t\nearrow 1}{\int{\varphi^{geo}d{\mu_t}}}=\lim_{t \nearrow 1}\chi(\mu_t)=-\alpha_2$. Therefore, $l_{\alpha_2}=h(\mu')-t\int{\varphi^{geo}d{\mu'}}$ and $\mu'=\mu_{\alpha_2}$. 

For $\alpha\in (\alpha_2,0]$, a suitable linear combination of $\mu_{\alpha_2}$ and $\delta_{\underaccent{\bar}{0}}$ will give us $\mu_{\alpha}\in \mathcal{M}(\widetilde{G})$ such that $\chi(\mu_{\alpha})=-\alpha$ and $\mathscr{P}_{\mu_{\alpha}}(1)=0$ as the entropy map is affine in measure. This shows that $l_\alpha=h(\mu_{\alpha})-t\int{\varphi^{geo}d\mu_{\alpha}}$ is the supporting line for $\mathscr{P}(t)$ with slope $\alpha$ for $\alpha \in (\alpha_2,0]$. Lemma 9.10 now just comes from a combination of the results in the two parts above.
\end{proof}

From Proposition 9.1, Lemma 9.10 and variational principle, we have 

\begin{lemma}
$\mathscr{E}(\alpha)=\max\{h(\mu): \mu\in \mathcal{M}(G), \chi(\mu)=-\alpha\}$ for all $\alpha \in (\alpha_1,0]$
\end{lemma}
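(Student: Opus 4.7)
The plan is a two-sided bound: the lower bound by exhibiting a maximizer, the upper bound by the variational principle. For the lower bound, the needed maximizer has essentially already been produced. For $\alpha\in(\alpha_1,\alpha_2)$, the unique equilibrium state $\mu_{t_\alpha}$ of $\varphi_{t_\alpha}$ (with $t_\alpha$ the unique tangent point of $l_\alpha$ to the graph of $\mathscr{P}$, guaranteed by Theorem~1.2 and the analysis in Section~9.2) satisfies $\chi(\mu_{t_\alpha})=-\alpha$ and $h(\mu_{t_\alpha})=\mathscr{E}(\alpha)$. For $\alpha\in[\alpha_2,0]$, Lemma~9.10 produces a measure $\mu_\alpha\in\mathcal{M}(\widetilde{G})$ with the same two properties, as a weak-$*$ limit of $\mu_t$ ($t\nearrow 1$) suitably convex-combined with $\delta_{\underaccent{\bar}{0}}$. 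Either way, the right-hand side is non-empty and attains $\mathscr{E}(\alpha)$, which already disposes of the lower bound and the fact that the supremum is a maximum.

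For the upper bound, fix any $\mu\in\mathcal{M}(\widetilde{G})$ with $\chi(\mu)=-\alpha$. The variational principle applied to $\mathscr{P}(t)=P(\varphi_t)$ yields
\[
h(\mu)+t\int\varphi^{geo}\,d\mu\leq\mathscr{P}(t)\quad\text{for every }t\in\mathbb{R}.
\]
Substituting the value of $\int\varphi^{geo}\,d\mu$ pinned down by the constraint $\chi(\mu)=-\alpha$ and rearranging gives $h(\mu)\leq\mathscr{P}(t)-t\alpha$ for every $t$; taking the infimum over $t\in\mathbb{R}$ then produces $h(\mu)\leq\mathscr{E}(\alpha)$, as desired.

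Combining the two bounds closes the argument and shows the supremum is attained. I foresee no real obstacle: the statement is the measure-theoretic companion of the Legendre-duality identity $h(-\alpha)=\mathscr{E}(\alpha)$ from Proposition~9.1(3), and all the genuine work -- producing an extremal measure on the flat portion $[\alpha_2,0]$ of the spectrum, where $\mathscr{P}$ fails to be strictly convex at $t=1$ -- has already been carried out in Lemma~9.10. The proof is therefore a short variational-principle book-keeping argument.
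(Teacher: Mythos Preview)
Your proof is correct and follows essentially the same approach as the paper, which simply cites ``Proposition 9.1, Lemma 9.10 and variational principle'' without further elaboration. You have accurately unpacked these references: the measures $\mu_{t_\alpha}$ from Section~9.2 (for $\alpha\in(\alpha_1,\alpha_2)$) and $\mu_\alpha$ from Lemma~9.10 (for $\alpha\in[\alpha_2,0]$) supply the lower bound and the maximizer, while the variational principle applied to $\varphi_t$ gives the upper bound exactly as you describe.
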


Finally, since $L(-\alpha)$ is non-empty for all $\alpha\in (\alpha_1,0]$ and $\widetilde{G}$ has specification property, we apply Theorem 3.5 in \cite{TV03} and conclude that $h(-\alpha)=\mathscr{E}(\alpha)$ for all $\alpha\in (\alpha_1,0]$.

\begin{proposition}
For any $\alpha\in (\alpha_1,0]$, $h(-\alpha)=\mathscr{E}(\alpha)$.
\end{proposition}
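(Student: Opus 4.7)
The plan is to split the range $(\alpha_1, 0]$ at $\alpha_2$. For $\alpha \in (\alpha_1, \alpha_2)$, the identity $h(-\alpha) = \mathscr{E}(\alpha)$ is already covered by Proposition 9.1(3): uniqueness of the equilibrium state $\mu_t$ for $\varphi_t$ with $t<1$ (Theorem 1.2), combined with upper semi-continuity of the entropy map, forces $\mathscr{P}$ to be $C^1$ on $(-\infty,1)$, and the tangent-line computation following (9.1) exhibits an ergodic $\mu_{t_\alpha}$ with $\chi(\mu_{t_\alpha}) = -\alpha$ and $h(\mu_{t_\alpha}) = \mathscr{E}(\alpha)$ supported on $L(-\alpha)$, which yields both bounds via the variational principle and Legendre duality.

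For the phase-transition range $\alpha \in [\alpha_2, 0]$, I would invoke the Takens--Verbitskiy multifractal theorem \cite[Theorem 3.5]{TV03}: for a continuous map with the specification property and a continuous observable $f$, one has $h(K_\beta) = \sup\{h(\mu) : \int f\, d\mu = \beta\}$ for any $\beta$ with $K_\beta := \{x : \lim_{n \to \infty} \frac{1}{n} S_n f(x) = \beta\}$ nonempty. The three hypotheses to verify are: (i) specification for $\widetilde{G}$ at all scales, inherited from Proposition 3.1(1) and specification of the linear Anosov $f_A$, exactly as in Proposition 3.6 for $G$; (ii) nonemptiness of $L(-\alpha)$, which is Proposition 9.2 via the nested basic-set construction $\widetilde{\Lambda}_n$ of Section 9.3; and (iii) the variational identity $\mathscr{E}(\alpha) = \max\{h(\mu) : \mu \in \mathcal{M}(\widetilde{G}),\ \chi(\mu) = -\alpha\}$ from Lemma 9.11. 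Applying the theorem with $f = -\varphi^{geo}$ and $\beta = -\alpha$ then yields $h(L(-\alpha)) = \mathscr{E}(\alpha)$.

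The subtle point is that $L(-\alpha)$ is defined by Lyapunov \emph{regularity}, $\chi^+(x) = \chi^-(x) = -\alpha$, while the level set $K_{-\alpha}$ in \cite{TV03} asks only that the forward Birkhoff average of $-\varphi^{geo}$ equal $-\alpha$. The inclusion $L(-\alpha) \subseteq K_{-\alpha}$ delivers the upper bound $h(L(-\alpha)) \leq \mathscr{E}(\alpha)$ for free by monotonicity of Bowen entropy. For the matching lower bound, the generic-point construction of \cite{TV03} shadows, via specification, orbit segments whose empirical distributions converge to a prescribed ergodic measure $\mu$ with $\int f\, d\mu = -\alpha$; Oseledets applied to such a $\mu$ forces both $\chi^+$ and $\chi^-$ to equal $-\alpha$ $\mu$-a.e., so the constructed generic set embeds into $L(-\alpha)$ rather than merely into $K_{-\alpha}$. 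This is the principal technical hurdle: specification for $\widetilde{G}$ comes from the non-H\"older conjugacy to $f_A$, and a careful check is needed to ensure that the shadowing argument tolerates the non-uniform hyperbolicity near the origin while still landing inside the Lyapunov regular set.
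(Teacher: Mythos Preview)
Your overall strategy coincides with the paper's: Proposition 9.1(3) handles $(\alpha_1,\alpha_2)$, and for $[\alpha_2,0]$ both you and the paper invoke Theorem 3.5 in \cite{TV03} using specification, nonemptiness of $L(-\alpha)$ (Proposition 9.2), and the variational identity of Lemma 9.11. The paper's own proof is in fact the single sentence just before Proposition 9.12, so at that level you match.

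You correctly flag a subtlety the paper glosses over: \cite{TV03} treats the forward Birkhoff level set $K_{-\alpha}$, whereas $L(-\alpha)$ demands Lyapunov regularity. Your proposed resolution, however, has a gap. The points produced by the specification construction in \cite{TV03} are \emph{not} $\mu$-generic---they are assembled by shadowing pieces of generic orbits---so Oseledets for $\mu$ says nothing about their backward exponent, and the implication ``$\chi^+=\chi^-=-\alpha$ $\mu$-a.e., so the constructed generic set embeds into $L(-\alpha)$'' does not follow. Retreating to the set of actual $\mu$-generic points yields only $h(L(-\alpha))\ge h(\mu)$ for \emph{ergodic} $\mu$ with exponent $-\alpha$; but for $\alpha\in(\alpha_2,0)$ the maximiser $\mu_\alpha$ of Lemma 9.10 is a non-ergodic convex combination with $\delta_{\underaccent{\bar}{0}}$, and you have not shown the supremum over ergodic measures with that exponent reaches $\mathscr{E}(\alpha)$. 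The clean fix is already available in the paper: the ergodic equilibrium states $\mu_\alpha^n$ on the basic sets $\widetilde{\Lambda}_n$ (paragraph before Lemma 9.7) satisfy $\mu_\alpha^n(L(-\alpha))=1$ and $h(\mu_\alpha^n)=\mathscr{E}_{\widetilde{\Lambda}_n}(\alpha)\to\mathscr{E}(\alpha)$ by Lemma 9.8, which gives the lower bound directly via Bowen's inequality $h(Y)\ge h_\mu$ whenever $\mu(Y)=1$.
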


Combining the results from Proposition 9.9 and Proposition 9.12, we conclude the proof of Theorem 1.4.

\subsection*{Acknowledgments} I would like to thank my advisor, Dan Thompson, for providing enlightening suggestions through the process. I also thank Ke Zhang for some enlightening conversations and the referee for the helpful comments that have benefited the paper a lot.



\end{document}